\newcommand{\grc}[1]{\raisebox{-1.3cm}{\includegraphics[height=3cm]{TP#1.pdf}}}
\newtheorem{theorem}{Theorem}[section]
\newtheorem{conjecture}[theorem]{Conjecture}
\newtheorem{proposition}[theorem]{Proposition}
\newtheorem{corollary}[theorem]{Corollary}
\newtheorem{lemma}[theorem]{Lemma}
\newtheorem{fact}[theorem]{Fact}
\theoremstyle{definition}
\newtheorem{definition}[theorem]{Definition}
\newtheorem*{acknowledgement}{Acknowledgements}
\newtheorem{remark*}[theorem]{}
\theoremstyle{remark}
\newtheorem{remark}[theorem]{Remark}
\newcommand{\F}{\mathfrak{F}}
\def\ss{\subset}
\DeclareMathOperator*{\dprime}{\prime \prime}
\begin{document}
\title{An angle between intermediate subfactors and its rigidity}

\author{Keshab Chandra Bakshi}
\address{The Institute of Mathematical Sciences, Chennai, India}
\email{bakshi209@gmail.com}
\author{Sayan Das}
\address{Department of Mathematics\\ The University of Iowa}
\email{sayan-das@uiowa.edu}
\author{Zhengwei Liu}
\address{Department of Mathematics and Department of Physics\\ Harvard University}
\email{zhengweiliu@fas.harvard.edu}
\author{Yunxiang Ren}
\address{Department of Mathematics\\ University of the Tennessee}
\email{renyunxiang@gmail.com}

\begin{abstract}
We introduce a new notion of angle between intermediate subfactors and prove various interesting properties of the angle and relate it with the Jones' index. 
We prove a uniform 60 to 90 degree bound for the angle between minimal intermediate subfactors of a finite index irreducible subfactor. From this rigidity we can bound the number of minimal (or maximal) intermediate subfactors by the kissing number in geometry. As a consequence, the number intermediate subfactors of an irreducible subfactor has at most exponential growth with respect to the Jones index. This answers a question of Longo published in 2003.

\end{abstract}

\maketitle
\section{Introduction}
Jones pioneered the study of modern subfactor theory in his seminal paper (\cite{Jo1}). He showed that the indices of subfactors of type II$_1$ lie in the set
$
\{4\cos^2(\frac{\pi}{n}):n\geq3\}\cup[4,+\infty].
$
The study of intermediate subfactors $N \subset P \subset M$ for a finite index inclusion of $II_1$ factors plays an important role in understanding the theory of subfactors. (See \cite{Bi1}, \cite{BiJo1} for some early motivating results in this direction). 
We denote by $\mathcal{L}(N\subset M)$ the set of all intermediate von Neumann subalgebras for the subfactor $N\subset M$. The set $\mathcal{L}(N\subset M)$ forms a lattice under the two operations
$P\wedge Q= P\cap Q$ and $P\vee Q=\{P\cup Q\}^{\dprime}$. The lattice structure of von Neumann subalgebras was first studied by
Murray and von Neumann in \cite{MaVo}.  If $N\subset M$ is irreducible, that is $N^{\prime}\cap M= \mathbb{C}$, then $\mathcal{L}(N\subset M)$ is exactly 
the lattice of intermediate subfactors. In this case, all intermediate subalgebras are automatically factors. 

The lattice of intermediate subfactors generalize the lattice of subgroups because of the following reasons:
Let $G$ be a finite group with an outer action on the II$_1$ factor $M$. 
 Then the intermediate subfactors of $M\subset M\rtimes G$ are 
 given by $M\rtimes H$, where $H$ is a subgroup of $G$. 
 This leads us to the study of the lattice $\mathcal{L}(N\subset M)$ inspired by various interesting questions in group theory. See \cite{GX,Xu3,Xu2,Xu,Pa} for some recent progress.

Watatani in \cite{Wa}, following previous work by Popa \cite{Po1}, obtained the following remarkable result:
\begin{theorem}\label{watatani}\cite{Wa}
 Let $N\subset M$ be an irreducible subfactor of type $II_1$ such that $[M:N] < \infty$. Then 
 the set $\mathcal{L}(N\subset M)$ is finite.
\end{theorem}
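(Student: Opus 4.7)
The plan is to inject $\mathcal{L}(N\subset M)$ into the finite set of biprojections in a finite-dimensional relative commutant. I would form the basic construction $N\subset M\subset M_1=\langle M,e_N\rangle$ acting on $L^2(M,\tau)$, and to each intermediate subfactor $N\subset P\subset M$ associate the Jones projection $e_P\in B(L^2(M))$ onto $L^2(P,\tau)$, which implements the trace-preserving conditional expectation $E^M_P\colon M\to P$. By basic-construction theory $e_P\in M_1$, and since $N\subset P$ we have $e_P\in N'\cap M_1$. The assignment $P\mapsto e_P$ is injective because $P$ can be recovered via $P=\{x\in M: xe_P=e_Px\}$, which one verifies directly from the identity $e_P L^2(M)=L^2(P)$.

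By Jones' tower theory, the hypothesis $[M:N]<\infty$ forces $N'\cap M_1$ to be finite-dimensional, with dimension bounded in terms of $[M:N]$. I expect the main obstacle to be that finite-dimensionality alone does not suffice: a finite-dimensional C*-algebra contains a continuous family of projections in every non-abelian summand, so one cannot immediately deduce that the collection $\{e_P\}_P$ is finite. A genuine rigidity condition singling out the Jones projections among all projections of $N'\cap M_1$ is essential.

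To supply the required rigidity, I would exploit the fact that each $e_P$ is a \emph{biprojection} in the sense of Bisch: in addition to being an ordinary projection, $e_P$ is (up to a positive scalar) a projection under the Fourier-dual coproduct on $N'\cap M_1$. Equivalently, $e_P$ satisfies an algebraic exchange relation with $e_N$ in the next step of the Jones tower, and $e_N\leq e_P\leq 1$. These constitute polynomial constraints in the matrix entries of the finite-dimensional algebra $N'\cap M_1$, cutting out a real algebraic subvariety whose intersection with the compact set of projections of trace at most $1$ is discrete, hence finite. Combined with the injectivity established above, this yields $|\mathcal{L}(N\subset M)|<\infty$, proving the theorem.
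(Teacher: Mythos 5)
Your reduction to biprojections is sound and matches the paper's framework: the map $P\mapsto e_P$ into $N'\cap M_1$ is injective, $N'\cap M_1$ is finite dimensional, and by Theorem \ref{Bisch} the image is exactly the set of biprojections. You also correctly identify that finite dimensionality alone proves nothing. But the step you offer to close the gap does not work as stated: the biprojection relations are indeed polynomial constraints, yet a compact real algebraic set cut out by polynomial equations in a finite-dimensional algebra can perfectly well have positive dimension --- the set of all projections of a fixed trace in $M_2(\mathbb{C})$ is already a $2$-sphere, defined by the polynomial relations $p=p^*=p^2$ and $\mathrm{tr}(p)=1/2$. So ``algebraic variety intersected with a compact set is discrete, hence finite'' is precisely the assertion that needs proof; the discreteness of the solution set of the exchange relation \emph{is} Watatani's theorem, and your argument assumes it.

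The paper closes exactly this gap quantitatively rather than by appeal to algebraicity. Using Landau's theorem (Theorem \ref{Thm:Landau}) and Lemma \ref{lem:epeq}, it shows that for two distinct \emph{minimal} intermediate subfactors the normalized vectors $v_P=(e_P-e_1)/\|e_P-e_1\|_2$ satisfy $\cos\alpha(P,Q)<1/2$ (Theorem \ref{bound}), i.e.\ the angle exceeds $\pi/3$; hence these unit vectors are pairwise at distance $>1$ in $(N'\cap M_1)_{s.a.}$, and a kissing-number/volume argument bounds their number by $3^{\dim(N'\cap M_1)}$ (Theorem \ref{thm:minimal}). An induction over the lattice (Lemma \ref{lem: mIinequ}, Theorem \ref{thm:whole}) then bounds $|\mathcal{L}(N\subset M)|$ by $9^{[M:N]}$, which in particular gives the finiteness you want. (Watatani's original route obtained discreteness instead from Christensen's perturbation theorem --- two intermediate subalgebras that are close must coincide --- combined with compactness of the set of intermediate subalgebras.) To repair your proposal you would need to supply a separation statement of this kind; without one, the claim that the biprojection equations have only finitely many solutions is unsupported.
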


In the same paper  Watatani remarked that in this case we can regard an intermediate subfactor lattice as a ``quantization'' of continuous geometry, invented by von Neumann in \cite{Ne}, as
a continuous analogue of projective geometry. For a general finite index subfactor $N\subseteq M$ the set of all intermediate subfactors may  not be finite.  Even in the case when $N^\prime \cap M$
is abelian the set of intermediate subfactors may be infinite as shown in \cite{TW} (Theorem 5.4).  Thus finite index irreducible subfactors may behave very differently from non irreducible inclusions. 

Inspired by earlier works of Christensen and Watatani (see \cite{Cr1}, \cite{Wa}), Longo gave an explicit bound for the number of intermediate subfactors for irreducible subfactors in \cite{Lo}. 
He showed that the number of intermediate subfactors is bounded by $[M:N]^{2[M:N]^2}$.
Longo then asked if the number of intermediate subfactors can be bounded by $[M:N]^{[M:N]}$ (see discussions at the end of section 2.2 in \cite{Lo}). 
In this paper we answer this question positively by showing that:
\begin{theorem}\label{Thm:inter}
	Let $N \subseteq M$ be a finite index, irreducible subfactor. Then the number of intermediate subfactors $|\mathcal{L}(N\subset M)|$ is bounded by $9^{[M:N]}$
\end{theorem}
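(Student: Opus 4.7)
The plan is to combine the angle rigidity theorem announced in the abstract (the uniform $60^\circ$-to-$90^\circ$ bound for minimal intermediate subfactors) with a classical kissing-number style volume packing estimate, and then bootstrap from the bound on minimal intermediates to a bound on all intermediate subfactors.

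The first step is to encode each intermediate subfactor $N \subseteq P \subseteq M$ as a vector in a finite-dimensional real inner product space. The natural choice is the Jones projection $e_P$ onto $L^2(P)$, which is a biprojection in $N' \cap \langle M, e_N \rangle$. Equipping this algebra with the trace inner product $\langle x,y\rangle = \tau(xy^*)$, I would set $v_P := (e_P - e_N)/\|e_P - e_N\|$ for $P \neq N$, obtaining unit vectors in a real subspace whose dimension is controlled linearly by $[M:N]$ (via the tracial analysis of the biprojection lattice). The angle rigidity result established earlier in the paper then tells us that for any two distinct minimal intermediate subfactors $P$ and $Q$, the angle between them lies in $[\pi/3, \pi/2]$; equivalently, $\langle v_P, v_Q\rangle \in [0, 1/2]$, so in particular the pairwise Euclidean distances satisfy $\|v_P - v_Q\| \geq 1$. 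The classical volume packing argument then applies: in a $d$-dimensional space, the open balls of radius $1/2$ around such unit vectors are pairwise disjoint and contained in the ball of radius $3/2$ at the origin, giving at most $(3/2)^d/(1/2)^d = 3^d$ of them. With $d \leq [M:N]$, this yields at most $3^{[M:N]}$ minimal intermediates above $N$. By Galois-type duality (applying the same reasoning to the dual inclusion inside the basic construction), the same bound holds for the number of maximal intermediates below $M$.

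To extend from minimal/maximal intermediates to all of $\mathcal{L}(N \subset M)$, I would proceed by induction on the Jones index, exploiting the multiplicativity $[M:N] = [M:P][P:N]$ for any intermediate $P$. For $P \neq N, M$, both $N \subset P$ and $P \subset M$ are irreducible of strictly smaller index, so the inductive hypothesis applies to each side; a chain decomposition anchored on minimal (or maximal) intermediates combines the $3^{[M:N]}$ bounds on each side multiplicatively to produce the advertised $9^{[M:N]} = 3^{2[M:N]}$. The main obstacle I foresee is precisely this final step: a naive ``subsets of atoms'' count would blow up to the double exponential $2^{3^{[M:N]}}$, so one must structure the recursion so that the two $3^d$ packing factors (one for atoms, one for coatoms) multiply rather than compose. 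Keeping the base of the exponential equal to $9$ (and not larger) requires careful bookkeeping across the inductive step, using that each intermediate $P$ is pinned down jointly by the minimal element of $\mathcal{L}(N \subset M)$ lying below it and the maximal element lying above it.
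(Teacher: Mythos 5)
Your first half is essentially the paper's own argument for Theorem \ref{Thm:min}: the unit vectors $v_P=(e_P-e_1)/\|e_P-e_1\|_2$ in $(N'\cap M_1)_{s.a.}$, the rigidity bound $\alpha(P,Q)>\pi/3$ of Theorem \ref{Thm:angle} for distinct minimal intermediates, and the sphere-packing count giving at most $3^{\dim(N'\cap M_1)}\leq 3^{[M:N]}$ minimal intermediate subfactors. The gap is in the passage from minimal intermediates to all of $\mathcal{L}(N\subset M)$. The mechanism you propose --- that each intermediate $P$ is ``pinned down jointly'' by a minimal intermediate below it and a maximal intermediate above it, so that the two packing factors multiply to $9^{[M:N]}$ --- fails: such a pair need not be unique, and, worse, no choice of pair can be injective. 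Already for $M\subset M\rtimes \mathbb{Z}_2^n$ (intermediates $=$ subgroups of $\mathbb{Z}_2^n$), all subspaces $W$ with $V_0\subseteq W\subseteq H_0$ for a fixed line $V_0$ and hyperplane $H_0$ share the same admissible (atom, coatom) pairs; quantitatively there are roughly $2^{2n}$ such pairs but roughly $2^{n^2/4}$ subgroups, so for large $n$ no map $P\mapsto(\text{atom below},\text{coatom above})$ can be one-to-one. (The bound $9^{[M:N]}$ is of course still true there, since $[M:N]=2^n$; the point is that your argument does not prove it.) A secondary issue: induction ``on the Jones index'' over a real-valued quantity needs a quantitative decrement to be well founded; ``strictly smaller index'' alone does not terminate.

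The paper's actual route uses only minimal intermediates and gets the factor $2$ in the exponent from a geometric series, not from an atom--coatom pairing. Let $I(\delta^2)$ (resp.\ $m(\delta^2)$) be the supremum of $|\mathcal{L}(N\subset M)|$ (resp.\ of the number of minimal intermediates) over subfactors of index at most $\delta^2$; the packing bound gives $m(\delta^2)\leq 3^{\delta^2}$. Every intermediate $P\neq N,M$ contains some minimal intermediate $P_0$, and since $[P_0:N]\geq 2$ one has $[M:P_0]\leq \delta^2/2$; hence $\mathcal{L}(N\subset M)\setminus\{N,M\}\subseteq\bigcup_{P_0\in\mathcal{L}_m(N,M)}\bigl(\mathcal{L}(P_0\subset M)\setminus\{M\}\bigr)$, which (after a short case analysis for $|\mathcal{L}_m(N,M)|=0,1$) yields the recursion $I(\delta^2)\leq m(\delta^2)\,I(\delta^2/2)$. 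Iterating until the index drops below $4$, where there are no proper intermediates, gives $I(\delta^2)\leq \prod_{j\geq 0}3^{\delta^2/2^j}=3^{2\delta^2}=9^{[M:N]}$: the base $9$ arises because $\delta^2+\delta^2/2+\delta^2/4+\cdots=2\delta^2$. To repair your outline, replace the ``pinning down by atom and coatom'' step with this one-sided recursion (or supply some other injective bookkeeping); as written, the final step of your proposal does not establish the stated bound.
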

 Our bound improves the existing upper bound of the cardinality of the lattice $\mathcal{L}(N\subset M).$ We have improved Longo's 
 bound using purely planar algebraic machinery and as a consequence provide another proof of Theorem \ref{watatani}. To solve this problem of finding upper bound for the cardinality of $\mathcal{L}(N\subset M)$, our idea is to firstly focus on
 minimal intermediate subfactors.

Minimal (or by duality maximal) subfactors were extensively studied by Guralnick and Xu \cite{GX} inspired by Wall's conjecture  \cite{Wal61}: 
\begin{conjecture}[Wall's conjecture]
	For a finite group $G$, let $\max(G)$ denote the number of maximal proper subgroups of $G$. Then we have
	\begin{equation*}
	\max(G)\leq \vert G\vert.
	\end{equation*}
\end{conjecture}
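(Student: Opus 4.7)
The plan is to prove the conjecture by strong induction on $|G|$, splitting into the solvable case (via Wall's original minimal-normal-subgroup argument) and the general case (via the socle structure and the Aschbacher--O'Nan--Scott theorem), with the almost-simple case as the key obstacle requiring CFSG.

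First, I would dispose of the solvable case. Pick a minimal normal subgroup $N \trianglelefteq G$, which is elementary abelian of order $p^a$. Every maximal subgroup $H < G$ either contains $N$, or satisfies $HN = G$ and $H \cap N = 1$. Maximal subgroups of the first kind correspond bijectively to maximal subgroups of $G/N$, giving at most $|G/N|$ of them by induction. Maximal subgroups of the second kind (complements to $N$ in $G$) form, when nonempty, a single $N$-orbit under conjugation, so they number at most $|N|$. Adding the two contributions yields $\max(G) \leq |G/N| + |N| \leq |G|$. The degenerate case $N \leq \Phi(G)$ admits no complements, so $\max(G) = \max(G/N) < |G|$ directly.

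Second, for general $G$, I would again pick a minimal normal subgroup $N$. If $N$ is abelian the solvable argument applies. Otherwise $N \cong T^k$ for a nonabelian finite simple group $T$, and $G$ embeds into $\mathrm{Aut}(T) \wr S_k$. Using the Aschbacher--O'Nan--Scott classification of primitive permutation groups, maximal subgroups of $G$ that do not contain $N$ fall into a bounded number of families (almost simple, product type, diagonal type, twisted wreath, affine). Each family should admit an explicit combinatorial bound in terms of $|N|$ and $k$, while maximal subgroups containing $N$ are controlled inductively through $G/N$.

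The main obstacle is the almost-simple heart of the induction: showing $\max(S) \leq |S|$ for every finite nonabelian simple $S$. This forces one through Aschbacher's subgroup-structure theorem, which partitions maximal subgroups of classical $S$ into the geometric classes $\mathcal{C}_1, \ldots, \mathcal{C}_8$ and a residual class $\mathcal{S}$ of irreducible almost-simple subgroups. The $\mathcal{C}_i$ counts are tractable via explicit combinatorics of stabilizers of geometric configurations, but the class $\mathcal{S}$ is the hard step and is precisely where the naive bound is known to be fragile; controlling $|\mathcal{S}|$ uniformly requires the deepest CFSG-based counting. An alternative route worth attempting is operator-algebraic: apply the angle-rigidity framework of this paper to the irreducible inclusion $M^G \subset M$ arising from an outer action of $G$, where maximal subgroups correspond to minimal intermediate subfactors. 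If the kissing-number bound can be sharpened from the exponential $9^{[M:N]}$ of Theorem~\ref{Thm:inter} down to a linear bound in the index, Wall's conjecture would follow for all finite groups simultaneously, bypassing the case analysis entirely.
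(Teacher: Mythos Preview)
The statement you are attempting to prove is labelled a \emph{conjecture} in the paper, and the very next paragraph records that it is \emph{false}: a counterexample was found at the June 2012 AIM workshop (the paper cites \cite{GHPS12}). There is therefore no proof in the paper to compare against, and any complete proof strategy for the full statement must contain an error.

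Your proposal in fact contains a concrete error already in the solvable step. The assertion that complements to a minimal normal abelian subgroup $N$ form a single $N$-orbit under conjugation is not true. Take $G=(\mathbb{Z}/p)^3$ and $N$ any one-dimensional subspace: the maximal subgroups not containing $N$ are exactly the complements to $N$, and there are $p^2$ of them, while $|N|=p$; since $G$ is abelian, conjugation by $N$ is trivial and every complement is its own orbit. Your inductive inequality $\max(G)\le |G/N|+|N|$ would give $\max(G)\le (p+1)+p$, whereas in fact $\max(G)=p^2+p+1$. Wall's original argument for solvable groups is more delicate than the one you sketch. Beyond the solvable case, you yourself flag the class $\mathcal{S}$ in Aschbacher's theorem as ``precisely where the naive bound is known to be fragile''; it is not merely fragile but fails, which is where the 2012 counterexamples live.

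The operator-algebraic alternative you propose at the end also cannot succeed: sharpening the kissing-number bound of Theorem~\ref{Thm:inter} from exponential to linear in the index would, via the inclusion $R^G\subset R$, prove Wall's conjecture for all finite groups, contradicting the known counterexamples. So that route is blocked for the same reason.
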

A generalization of Wall's conjecture has been proposed in the context of subfactors (see Conjecture $1.1$ in \cite{GX}).
During the June 2012 AIM workshop, ``Cohomology bounds and growth rates", a counterexample was found to Wall's conjecture \cite{GHPS12}.

Our result shows that the number of minimal intermediate subfactors is at most exponential growth with respect to the index. 
We conjecture that the number has polynomial growth:
\begin{conjecture}
There are constants $c_1, c_2$, so that for any irreducible subfactor $N \subseteq M$  with finite index, the number of minimal intermediate subfactors is less than $c_2[M:N]^{c_1}$.
\end{conjecture}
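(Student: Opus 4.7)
The plan is to leverage the geometric machinery the paper develops for the angle between intermediate subfactors, refining the kissing-number bound that yields Theorem~\ref{Thm:inter} into a polynomial estimate. To each minimal intermediate subfactor $P$ of $N\subset M$ one associates a biprojection (equivalently, a normalized vector) $v_P$ living in a finite-dimensional inner product space built from the $2$-box space of the planar algebra of $N\subset M$. The rigidity result underlying Theorem~\ref{Thm:inter} says that the pairwise angle between any two $v_P,v_Q$ lies in $[\pi/3,\pi/2]$. The exponential bound $9^{[M:N]}$ comes from packing vectors with pairwise angles $\geq \pi/3$ into Euclidean space, so to drop to polynomial one must exploit more than the worst-case $60^\circ$ lower bound.

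My first step would be to pin down the natural ambient space and bound its dimension polynomially (ideally linearly) in $[M:N]$, using the irreducibility hypothesis and the fact that biprojections are central $2$-box elements. I would then stratify pairs $(P,Q)$ of minimal intermediates by the index $[P\vee Q:N]$: when this is large relative to $[P:N][Q:N]$, one expects $v_P,v_Q$ to be close to orthogonal; only very constrained combinatorial configurations (those with small join index, analogous to an $A_4$-type biprojection lattice) can saturate the $60^\circ$ bound. The goal is to show that three or more minimal intermediates sitting mutually at exactly $60^\circ$ force the existence of an additional intermediate subfactor, producing either a contradiction with minimality or a strong restriction on how many mutually $60^\circ$ vectors can coexist.

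If the preceding step succeeds, the pairwise angles are bounded strictly above $\pi/3$, and standard spherical-code estimates improve the packing bound from exponential to polynomial (indeed, essentially linear) in the ambient dimension. Combined with the polynomial dimension bound, this yields the desired estimate $|\{\text{minimal intermediates}\}|\leq c_2[M:N]^{c_1}$. A parallel argument for maximal intermediate subfactors should follow by Jones duality between $N\subset M$ and its basic construction.

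The main obstacle is the step ruling out accumulations of $60^\circ$ angles. The angle estimate alone is intrinsically an exponential packing constraint, and group-type subfactors $M\subset M\rtimes G$ with $G$ having many minimal subgroups show that the combinatorics of minimal intermediates is already subtle at the level of finite group theory (and genuinely tied to Wall's conjecture and its counterexamples in \cite{GHPS12}). A successful proof likely requires a substantial sharpening of the angle rigidity, relating the exact angle to $[P\vee Q:N]$ and $[P\cap Q:N]$, together with a combinatorial argument that forbids many simultaneous saturations of the $60^\circ$ lower bound; alternatively, one might try a direct induction on the height of the intermediate subfactor lattice, using the conjectured polynomial bound at lower levels to control the top.
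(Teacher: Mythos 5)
This statement is posed in the paper as an open conjecture; the paper itself proves only the exponential bound of Theorem \ref{Thm:min} (via Theorem \ref{Thm:angle} and the kissing number), so there is no proof of the conjecture to compare against, and your text does not supply one either. You candidly flag the missing step yourself: the argument that three or more minimal intermediates cannot mutually sit at (or near) $60^\circ$, or more generally that the angle can be pushed strictly away from $\pi/3$, is exactly the content one would have to prove, and nothing in the paper's machinery (Landau's theorem \ref{Thm:Landau}, Lemma \ref{lem:epeq}, Proposition \ref{mini}) gives it. As written, your plan is a research program with the conjecture's difficulty concentrated in an unproved intermediate claim, not a proof.

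There is also a quantitative error in the final step that would sink the strategy even if the angle-rigidity sharpening were available. Bounding the pairwise angle strictly above $\pi/3$ by a fixed constant does \emph{not} reduce spherical-code bounds to polynomial size: for any fixed minimal angle $\theta<\pi/2$ the maximal number of unit vectors in $\mathbb{R}^n$ with pairwise angles $\geq\theta$ still grows exponentially in $n$ (the Kabatiansky--Levenshtein and Delsarte-type upper bounds cited in \S\ref{Sec:number} are exponential for every fixed $\theta<\pi/2$, and simple volume lower bounds show this is unavoidable). Polynomial-in-dimension counts only appear when the angles are forced to $\pi/2$ or beyond (Rankin-type bounds, giving $2n$ at $\pi/2$ and $n+1$ above it), which is precisely what happens in the special case the paper can handle, namely $N'\cap M_1$ abelian (Theorem \ref{thm:abSecRc}), where $\alpha(P,Q)=\pi/2$ exactly. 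So to make your approach work you would need either angles tending to $\pi/2$ uniformly (not merely $>\pi/3+\epsilon$), or a reduction of the relevant configuration to a subspace of dimension logarithmic in $[M:N]$; neither is established here, and the duality remark for maximal intermediates inherits the same gap.
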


Furthermore, we prove that the number of minimal intermediate subfactors is bounded by the kissing number $\tau_n$ of the $n$-dimensional sphere, where $n=\dim (N'\cap M_1)$. 
A straightforward estimate of the kissing number shows that $\tau_n < 3^n$. Therefore we get:
\begin{theorem}\label{Thm:min}
Suppose $N\subset M$ is a finite index, irreducible subfactor. Then the number of minimal intermediate subfactors is less than $3^{\dim (N'\cap M_1)}$.
\end{theorem}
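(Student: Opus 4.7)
The plan is to translate Theorem \ref{Thm:min} into a spherical packing problem and then invoke the classical volume bound on the kissing number. The ambient space will be the real Hilbert space $V := (N'\cap M_1)_{\mathrm{sa}}$ of self-adjoint elements, equipped with the trace inner product $\la x, y\ra := \tau(xy)$; its real dimension equals $n := \dim(N'\cap M_1)$.

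To each intermediate subfactor $N \ss P \ss M$ with $P \neq N$ I would attach a unit vector $w_P \in V$ (for instance the normalization of $e_P - e_N$, where $e_P$ denotes the Jones projection onto $L^2(P)$) in such a way that the new notion of angle $\theta(P,Q)$ introduced in the paper equals the Euclidean angle between $w_P$ and $w_Q$, i.e.\ $\cos\theta(P,Q) = \la w_P, w_Q\ra$. Recording this geometric interpretation is the first step of the proof; it should follow transparently from the definition of the angle in terms of the trace inner product on Jones projections. The angle rigidity announced in the abstract, $\theta(P,Q) \in [\pi/3,\pi/2]$ for every pair of distinct minimal intermediate subfactors, then translates into the numerical inequality $0 \leq \la w_P, w_Q\ra \leq 1/2$.

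Thus the family $\{w_P : P \text{ minimal intermediate}\}$ is a set of unit vectors in the $n$-dimensional real Hilbert space $V$ with pairwise angular distance at least $\pi/3$. By definition, the maximum size of such a family is the kissing number $\tau_n$. The standard volume estimate yields $\tau_n < 3^n$: from $\la w_P, w_Q\ra \leq 1/2$ one gets $\|w_P - w_Q\| \geq 1$, so the open balls $B(w_P,1/2)$ are pairwise disjoint; they all lie in $B(0,3/2)$ and together miss $B(0,1/2)$, so a volume comparison forces the cardinality to be at most $3^n - 1 < 3^n$, completing the bound.

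The main obstacle is the angle rigidity itself (the $60^{\circ}$--$90^{\circ}$ bound for minimal intermediate subfactors), which is the principal technical theorem of the paper and is proved separately. Given that, the remainder of Theorem \ref{Thm:min} is routine: the identification of $\theta(P,Q)$ with a Euclidean angle is built into the construction of the angle in earlier sections, and the passage to the bound $3^n$ is the elementary packing argument recalled above. A minor point to check along the way is that the assignment $P \mapsto w_P$ is injective, which is immediate since the Jones projection $e_P$ determines $P$.
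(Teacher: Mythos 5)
Your proposal is correct and follows essentially the same route as the paper: attach to each minimal intermediate subfactor the unit vector $v_P=(e_P-e_1)/\lVert e_P-e_1\rVert_2$ in $(N'\cap M_1)_{s.a.}$, invoke the $\pi/3$ angle rigidity to get pairwise distances at least $1$, and conclude by the standard volume-packing estimate giving at most $3^n-1<3^n$ vectors. The only cosmetic difference is the scaling (you pack balls of radius $1/2$ centered at $v_P$ inside $B(0,3/2)\setminus B(0,1/2)$, while the paper packs unit balls centered at $2v_P$ inside $\overline{B(3)\setminus B(1)}$), which yields the identical bound.
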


We prove the above theorem by introducing a new angle (see Definition \ref{alpha}), denoted by $\alpha^N_M(P,Q)$, between intermediate subfactors $P$ and $Q$ of any finite index subfactor $N\subset M$. Our definition uses the $1$-$1$ correspondence between intermediate subfactors (of irreducible subfactor) and biprojections introduced in \cite{Bi1} (reformulated in planar algebraic terms - which we will actually use - as in \cite{La} and \cite{BiJo2}). The angle is also the Fourier dual of the correlation function. 
We prove the following rigidity result for the angle between minimal intermediate subfactors:

\begin{theorem}\label{Thm:angle}
If  $P,Q$ are two distinct minimal intermediate subfactors of a finite index, irreducible subfactor $N\subset M$, then $ \frac{\pi}{3}< \alpha^N_M(P,Q) \leq  \frac{\pi}{2}$. 
\end{theorem}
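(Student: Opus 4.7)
My plan is to translate the statement into the language of biprojections. Under the Bisch--Landau correspondence, $P$ and $Q$ give rise to biprojections $e_P, e_Q \in N' \cap M_1$, and the Jones projection $e_1$ for $N \subset M$ is the minimum biprojection. The hypothesis that $P$ and $Q$ are distinct minimal intermediate subfactors translates to $e_P \wedge e_Q = e_1$ together with minimality: no biprojection sits strictly between $e_1$ and $e_P$ (and similarly for $e_Q$). The first step is to unpack Definition~\ref{alpha}; guided by the remark that $\alpha$ is the Fourier dual of the correlation function, I expect $\cos \alpha^N_M(P,Q)$ to be expressible as a normalized trace inner product between the parts of $e_P$ and $e_Q$ orthogonal (in the trace inner product on $N' \cap M_1$) to $e_1$ and the identity, schematically
\[
\cos \alpha^N_M(P,Q) \;=\; \frac{\tr\bigl((e_P - c_P)(e_Q - c_Q)\bigr)}{\|e_P - c_P\|_2\,\|e_Q - c_Q\|_2},
\]
with normalizing scalars $c_P, c_Q$ determined by $[P:N]$, $[Q:N]$ and $[M:N]$. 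With such a formula in hand, both bounds reduce to trace inequalities.

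For the upper bound $\alpha \leq \pi/2$ it suffices to show that this cosine is non-negative. This should follow from positivity: $e_P$ and $e_Q$ are positive operators so $\tr(e_P e_Q) \geq 0$, and via Fourier duality the convolution $e_P * e_Q$ is a positive element whose inner product with $e_1$ is controlled from below by $c_P c_Q$. A short computation combining these two positivity facts should yield $\cos \alpha \geq 0$.

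The lower bound $\alpha > \pi/3$ is the main content, and I plan to argue by contradiction: suppose $\cos \alpha^N_M(P,Q) \geq 1/2$, and extract from this a biprojection sitting strictly between $e_1$ and $e_P$, violating minimality. The mechanism is the biprojection exchange relation of Bisch--Jones--Landau: the support (range) projection of the convolution of two biprojections is again a biprojection, so the support of $e_P e_Q e_P$, or of a suitably shifted version, is a biprojection contained in $e_P$. The assumption $\cos \alpha \geq 1/2$, combined with $[P:N] \geq 2$ and $[Q:N] \geq 2$, should force this support biprojection to differ both from $e_1$ and from $e_P$, producing the contradiction. Morally, the threshold $60^\circ$ is natural because $\cos \theta = 1/2$ is the kissing angle on the unit sphere, which is precisely the threshold needed for the kissing-number bound of Theorem~\ref{Thm:min} to follow from this rigidity.

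The main obstacle will be making this extraction rigorous: turning the numerical inequality $\cos \alpha \geq 1/2$ into an explicit biprojection \emph{properly} between $e_1$ and $e_P$, rather than one that collapses to either endpoint. This should require a careful positive-cone analysis in $N' \cap M_1$, along with planar-algebraic identities for convolutions of biprojections to compute traces and supports exactly. Handling the boundary case $\cos \alpha = 1/2$ (to get strict inequality $\alpha > \pi/3$ rather than just $\geq$) is likely to need a separate argument, probably by a strict-positivity or rank refinement of the generic extraction.
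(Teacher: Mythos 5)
Your upper-bound step is fine in spirit, but note that for $\alpha$ the correct normalization subtracts the Jones projection $e_1$, not scalars: $\cos\alpha^N_M(P,Q)=\langle v_P,v_Q\rangle$ with $v_P=(e_P-e_1)/\lVert e_P-e_1\rVert_2$, and since $e_P-e_1$, $e_Q-e_1$ are positive (nested projections), $\cos\alpha\geq 0$ is immediate. Subtracting the scalars $\tau_P,\tau_Q$ instead produces the dual angle $\beta$ (the correlation), which satisfies different bounds, so this distinction matters.

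The genuine gap is in the lower bound. Your contradiction mechanism --- ``$\cos\alpha\geq 1/2$ should force a biprojection strictly between $e_1$ and $e_P$'' --- is not substantiated, and the specific construction you propose does not produce such an object. The range projection of the \emph{coproduct} $e_P * e_Q$ is indeed a biprojection, but it dominates $e_P\vee e_Q$ (it corresponds to $P\vee Q$), so it lies \emph{above} $e_P$, not below it; and the support projection of the operator product $e_Pe_Qe_P$ is not in general a biprojection, so neither candidate can violate minimality of $P$. More fundamentally, no step in your outline converts the numerical hypothesis $\cos\alpha\geq 1/2$ into any algebraic object at all; minimality enters the paper's proof only through the much weaker consequence $P\cap Q=N$, i.e.\ $e_P\wedge e_Q=e_1$. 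The actual argument is quantitative rather than structural: by Landau's theorem $\frac{1}{\delta\,tr(e_Pe_Q)}e_P*e_Q$ is a projection, and the exchange relation shows it dominates $e_P\vee e_Q$ (Lemma \ref{lem:epeq}); taking traces and using $tr(e_P*e_Q)=\delta\tau_P\tau_Q$ together with $e_P\wedge e_Q=e_1$ gives
\begin{equation*}
\frac{\tau_P\tau_Q}{tr(e_Pe_Q)}\;\geq\;\tau_P+\tau_Q-\tau,
\end{equation*}
and then the formula $\cos\alpha=\dfrac{tr(e_Pe_Q)-\tau}{\sqrt{\tau_P-\tau}\sqrt{\tau_Q-\tau}}$ plus AM--GM yields
$\cos\alpha\leq\dfrac{\sqrt{(\tau_P-\tau)(\tau_Q-\tau)}}{\tau_P+\tau_Q-\tau}<\dfrac{\sqrt{(\tau_P-\tau)(\tau_Q-\tau)}}{(\tau_P-\tau)+(\tau_Q-\tau)}\leq \tfrac12$.
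Note also that the strictness you worried about at the boundary is free here: it comes from $\tau>0$ in the denominator, with no separate boundary-case analysis needed. Without a concrete replacement for the missing extraction step, your proposed route does not close.
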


We can identify intermediate subfactors as unit vectors in the real vector space $(N' \cap M_1)_{s.a.}$ such that the angle between them are given by $\alpha$. Then Theorem \ref{Thm:min} follows from Theorem \ref{Thm:angle}. Iterating Theorem \ref{Thm:min}, we obtain Theorem \ref{Thm:inter}.

We also study the angle for intermediate subfactors of non-irreducible subfactors and show that $\alpha^N_M(P,Q) = \frac{\pi}{2}$ if and only if the quadruple
$$\begin{matrix}
Q &\subset & M \cr
\cup &\ &\cup\cr
N  &\subset & P
\end{matrix},$$
denoted by $(N,P,Q,M)$, is a commuting square, namely $E^M_P E^M_Q= E^M_Q E^M_P = E^M_N$. Commuting square is a central tool in subfactor theory. See for example \cite{JS,GHJ,Po2,Po3,Po4,Po5}, to name a few.
By Fourier duality, we define a dual angle $\beta^N_M(P,Q)$ (in Definition \ref{beta}), and show that $\beta^N_M(P,Q) = \frac{\pi}{2}$ if and only if the quadruple is a co-commuting square.

In general, the angles $\alpha^N_M(P,Q)$ and $\beta^N_M(P,Q)$ are different. 
Surprisingly, the following result holds:
\begin{theorem}
Suppose $P,Q$ are two distinct intermediate subfactors of a finite index subfactor $N\subset M$. If $[M:Q]= [P:N]$ and hence $[M:P] = [Q:N]$, then $\alpha^N_M(P,Q)=\beta^N_M(P,Q)$.
\end{theorem}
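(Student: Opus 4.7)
The plan is to reduce the equality $\alpha^N_M(P,Q) = \beta^N_M(P,Q)$ to a single trace identity that expresses the self-duality of the ``correlation'' $\tau(e_P e_Q)$ under Fourier transform, and then to verify that the index symmetry collapses the leftover normalization constants. First I would unpack both definitions in trace coordinates. Writing $\tau$ for the Markov trace on $N' \cap M_1$, and using the fact that $N \subset P$ forces $e_N e_P = e_N$ (and similarly for $Q$), a direct expansion of the trace inner product on $(N' \cap M_1)_{s.a.}$ gives
$$\cos \alpha^N_M(P,Q) = \frac{\tau(e_P e_Q) - \tau(e_N)}{\sqrt{(\tau(e_P) - \tau(e_N))(\tau(e_Q) - \tau(e_N))}}.$$
Passing to the Jones tower $M \subset M_1 \subset M_2$ and the dual intermediate subfactors $M \subset \langle M, e_P\rangle \subset M_1$ and $M \subset \langle M, e_Q\rangle \subset M_1$, with Jones projections $f_P, f_Q, f_M \in M' \cap M_2$, an exactly parallel computation yields
$$\cos \beta^N_M(P,Q) = \frac{\tau(f_P f_Q) - \tau(f_M)}{\sqrt{(\tau(f_P) - \tau(f_M))(\tau(f_Q) - \tau(f_M))}}.$$

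Next I would record the basic traces: $\tau(e_N) = \tau(f_M) = [M:N]^{-1}$, $\tau(e_P) = [M:P]^{-1}$, $\tau(f_P) = [P:N]^{-1}$, and likewise for $Q$. The hypothesis $[M:P] = [Q:N]$ and $[M:Q] = [P:N]$ then immediately yields
$$\tau(e_P) - \tau(e_N) = \tau(f_Q) - \tau(f_M), \qquad \tau(e_Q) - \tau(e_N) = \tau(f_P) - \tau(f_M),$$
so the denominators of the two cosines coincide. Since also $\tau(e_N) = \tau(f_M)$, the whole theorem reduces to the single identity
$$\tau(e_P e_Q) = \tau(f_P f_Q). \qquad (\star)$$

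To establish $(\star)$ I would invoke the Fourier transform $\mathcal F$ on the 2-box space of the subfactor planar algebra. For any biprojection $e_P$, the element $\mathcal F(e_P)$ is a positive scalar multiple of the dual biprojection $f_P$, and matching $2$-norms (which are preserved by $\mathcal F$) forces $\mathcal F(e_P) = \sqrt{[P:N]/[M:P]}\, f_P$, and analogously for $Q$. Since $\mathcal F$ is a $\tau$-isometry and both $\mathcal F(e_P)$ and $\mathcal F(e_Q)$ are positive (hence self-adjoint), one obtains
$$\tau(e_P e_Q) = \tau\bigl(\mathcal F(e_P)\mathcal F(e_Q)\bigr) = \sqrt{\tfrac{[P:N][Q:N]}{[M:P][M:Q]}}\,\tau(f_P f_Q),$$
and the hypothesis $[M:P][M:Q] = [P:N][Q:N]$ makes the scalar equal to $1$. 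The main obstacle is the careful bookkeeping in this last step: one must verify $\mathcal F(e_Q)^* = \mathcal F(e_Q)$ and correctly track how the $\pi/2$-rotation on planar tangles interacts with cyclicity of the trace. I would handle this by computing both sides directly as closed labelled tangles---$\tau(e_P e_Q)$ is the four-strand closure of the two biprojection $2$-boxes, and a $\pi/2$ rotation of that diagram produces the corresponding closure for $f_P$ and $f_Q$, with the only residual scaling being precisely $\sqrt{[P:N][Q:N]/([M:P][M:Q])}$, which equals $1$ under the stated index symmetry. This proves $(\star)$ and hence the theorem.
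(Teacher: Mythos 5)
Your proposal is correct and follows essentially the paper's own route: the paper also reduces both angles to trace formulas in $e_P,e_Q$ and their Fourier duals (its correlation-function proposition plus the $\cos\alpha^N_M$ and $\cos\beta^N_M$ formulas in Theorem \ref{Thm: alpha-beta}), and the parallelogram condition $\tau_P\tau_Q=\tau$ makes the two expressions coincide, your explicit constants $\sqrt{[P:N]/[M:P]}$ being just the unnormalized version of the paper's use of the unit vectors $v_P$. The only caveat is that the Fourier-transform isometry you invoke requires extremality of $N\subset M$, which is exactly the hypothesis the paper adds in its body statement (Theorem \ref{v}) and uses in the same way.
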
 
When the equality holds, we call the quadruple $(N,P,Q,M)$ a {\it parallelogram}, and consider the angles $\alpha^N_M (P, Q)$ and $\beta^N_M(P, Q)$ as opposite angles of the parallelogram.
 
We further study the relation between angles and Pimsner-Popa bases and derive various equivalent conditions for a quadruple to become a commuting and/or co-commuting square (see Theorems \ref{q}, \ref{cs}, \ref{sym}). As a consequence, we recover the various equivalent 
conditions of `non-degenerate commuting square' by Popa in \cite{Po2}  (see Corollary \ref{po2}).

This paper is organized as follows. In \S \ref{Sec:preliminary} we recall some basic definitions and results from the theory of planar algebras. 
In \S \ref{Sec:angle} we define our angle and obtain various properties, mainly related to commuting squares. 
In \S \ref{Sec:rigidity} we prove the main rigidity result Theorem \ref{Thm:angle}.
In \S \ref{Sec:number} we estimate the number of intermediate subfactors and prove Theorems \ref{Thm:min} and \ref{Thm:inter}.

\begin{acknowledgement}
The first author would like to thank V.S.Sunder and Vijay Kodiyalam for various useful discussions.
The second author would like to thank Jesse Peterson for his encouragement and various helpful discussions.
The third author would like to thank Feng Xu for helpful discussions. 
All authors would like to thank Hausdorff Research Institute for Mathematics, Bonn, where this project had been started, for their kind hospitality.
The first author is supported by IMSc and HBNI.
The third author is supported in part by Templeton religion trust under grants TRT 0080 and TRT 0159.
\end{acknowledgement}

\section{Preliminary}\label{Sec:preliminary}
The central object in the subfactor theory is the standard invariant of a given subfactor. For a crash course in subfactor theory the reader is referred to the book \cite{JS}. A deep theorem by Popa says that the standard invariant completely determines strongly amenable subfactors \cite{Po2}. Moreover, Popa introduced standard $\lambda$-lattice as an axiomatization of the standard invariant in \cite{Po3} which completes Ocneanu's paragoroup axiomatization for subfactors of finite depth \cite{Ocn88}. Jones  subsequently introduced subfactor planar algebras as an axiomatization of the standard invariant of subfactors, which capture the topological properties \cite{Jo2}. We briefly recall the definition of planar algebras and the correspondence to the standard invariant of a subfactor in \cite{Jo2}.

A planar tangle is defined in $\mathbb{R}^2$. It consists of the following data:

(1) An output disc. In the interior there exist finitely many input discs. 

(2) Finitely many smooth strands in the interior of the output disc and the complement of the input discs which meet the boundaries of discs transversally. 

(3) As each boundary is partitioned into finitely many intervals, a $\$$ sign  is assigned to a distinguished interval (to indicate the relative position). 

(4) The connected components of $\mathbb{R}^2$ in the output disc are called regions. The regions admit a checkerboard coloring.

The planar algebra consists of a sequence of graded vector space ${P}_{n,\pm}$ which admit actions of planar tangles. Each planar tangle corresponds to a multilinear map between tensor products of the vectors spaces determined by the boundary conditions of the planar tangle. An example of a planar tangle is given below:
\begin{equation*}
T=\grc{Tangle1}
\end{equation*}

The planar tangle $T$ corresponds to a multilinear map $Z_T: {P}_{2,+}\otimes {P}_{3,+}\otimes {P}_{1,+}\rightarrow {P}_{4,+}$, called the partition function. Composition of tangles/partition functions is demonstrated below. 
\begin{align*}
&T=\grc{Tangle1},~~S=\grc{Tangle2},\\
&T\circ_{D_2}S=\grc{Tangle3}.
\end{align*}
The planar tangle action determines more structure on each vector space $P_{n,\pm}$. To be more specific, we have the following tangle:

Multiplication: (We omit the shading here)
\begin{equation*}
\begin{tikzpicture}
\draw (0,0) circle [radius=1.8];
\draw (0,.9) circle [radius=.6];
\draw (0,-.9) circle [radius=.6];
\draw (0,1.5)--(0,1.8);
\draw (0,.3)--(0,-.3);
\draw (.707*.6,.707*.6+.9)--(.707*.6,1.75);
\draw (-.707*.6,.707*.6+.9)--(-.707*.6,1.75);
\draw (.707*.6,-.707*.6+.9)--(.707*.6,.707*.6-.9);
\draw (-.707*.6,-.707*.6+.9)--(-.707*.6,.707*.6-.9);
\draw (0,-1.5)--(0,-1.8);
\draw (.707*.6,-.707*.6-.9)--(.707*.6,-1.75);
\draw (-.707*.6,-.707*.6-.9)--(-.707*.6,-1.75);
\node at (-.6,.9) [left] {\$};
\node at (-.6,-.9) [left] {\$};
\node at (-1.8,0) [right] {\$};
\draw (.05,0)--(.35,0) [ultra thick, dotted];
\draw (.05,1.65)--(.35,1.65) [ultra thick, dotted];
\draw (.05,-1.65)--(.35,-1.65) [ultra thick, dotted];
\end{tikzpicture}
\end{equation*}

Following the notation of \cite{KodSun} we denote the so-called generating tangles  by $\{{\mathcal{E}}^k:k\geq 2\}, \{{(E^{\prime})}^k_k: k\geq 1\}$ and $\{E^k_{k+1},M_k, I^{k+1}_k : k\in Col\}$ which are called 
Jones Projection tangles, left conditional expectation tangles,(right) conditional expectation tangles, multiplication tangles and inclusion tangles  respectively.
As is usual, we will sometimes draw the discs as boxes and we may sometimes omit drawing the external disc. If from the context the shading is clear we will omit that also.

To be a subfactor planar algebra, we have some additional requirements:
\begin{itemize}
	\item $\dim(P_{0,\pm})=1;$
	\item $\dim(P_{n,\pm})<\infty$;
	\item For each $n\in\mathbb{N}$, the planar algebra $P_{n,\pm}$ admits an involution $*:P_{n,\pm}\rightarrow P_{n,\pm}$ and a positive-definite trace $tr$ defined by ${\delta}^{-n}$ times the following figure:
	\begin{equation*}
	\begin{tikzpicture}
	\draw (0,0) circle [radius=1.8];
	\draw (0,0) circle [radius=.9];
	\draw (-.707*.9,.707*.9) arc [radius=1, start angle=180, end angle=0];
	\draw (-.707*.9+2,.707*.9)--(-.707*.9+2,-.707*.9);
	\draw (-.707*.9,-.707*.9) arc [radius=1, start angle=-180, end angle=0];
	\draw (0,.9) arc [radius=.5, start angle=180, end angle=0];
	\draw (1,.9)--(1,-.9);
	\draw (0,-.9) arc [radius=.5, start angle=-180, end angle=0];
	\draw (1.05,0)--(2-.707*.9,0)[dotted, ultra thick];
	\node at (-.9,0) [left] {\$};
	\node at (-1.8,0) [right] {\$};
	\end{tikzpicture}
	\end{equation*}
\end{itemize}

Jones proved the following remarkable theorem, which provide the correspondence between subfactors and subfactor planar algebras:
\begin{theorem}\cite{Jo2} \label{jones}
 Let $N\ss M (=M_0){\ss}^{e_1} M_1 \ss \cdots {\ss}^{e_k} M_k {\ss}^{e_{k+1}}\cdots $
 be the tower of the basic construction associated to an extremal subfactor with $[M:N]= {\delta}^2<\infty.$
 Then there exists a unique subfactor planar algebra $P= P^{N\subseteq M}$ of modulus $\delta$ satisfying the following conditions:
 \begin{enumerate}
 \item $P^{N\subseteq M}_k= N^{\prime} \cap M_{k-1} \forall k\geq 1$-where this is regarded as an equality of $*$ algebras which is consistent with the inclusions on the  two sides;
 \item  $Z_{{\mathcal{E}}^{k+1}}(1)= \delta e_k \forall k\geq 1;$
 \item  $Z_{{(E^{\prime})}^k_k}(x)= \delta E_{M^{\prime}\cap M_{k-1}}\forall x\in N^{\prime}\cap M_{k-1}, \forall k\geq 1;$
  \item $Z_{E^k_{k+1}}(x)= \delta E_{N^{\prime}\cap M_{k-1}}(x) \forall x \in N^{\prime}\cap M_k$; and this is required to hold for all $k$ in Col, where for $k=0_{\bar{{+}}}$,
 the equation is interpreted as $Z_{E^{\bar{{+}}}_1}(x)= \delta tr_M(x) \forall x \in N^{\prime}\cap M.$
 \end{enumerate}
 Conversely (Theorem 4.3.1 of \cite{Jo2}), given any spherical $C^*$
planar algebra $P$, there exists an extremal subfactor
$N \ss M$ such that $P$ is equivalent to $P^{N\ss M}$ as planar algebras.
We will say that a planar algebra $P$ and a subfactor $N \ss M$ are
associated to one another if $P$ is equivalent to $P^{N\ss M}$.
\end{theorem}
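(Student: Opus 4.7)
The plan is to establish the two directions of the correspondence separately, as they require quite different techniques. The forward direction (extremal subfactor $\Rightarrow$ planar algebra) is mostly a matter of assembling known pieces of Jones' basic construction, while the converse direction is substantially harder and is where the real content lies.

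For the forward direction, I would first set $P_{k,+} := N' \cap M_{k-1}$ and $P_{k,-} := M' \cap M_k$, with $P_{0,\pm} = \C$. Since every shaded planar tangle can be decomposed up to planar isotopy into the generating tangles listed in the excerpt ($\mathcal{E}^k$, $E^k_{k+1}$, $(E')^k_k$, $M_k$, and $I^{k+1}_k$), it suffices to define the partition functions of these generators by the formulas (1)--(4) in the statement and then check that every isotopy equivalence and composition relation among planar tangles is preserved by the assigned multilinear maps. The essential identities to verify are the Temperley--Lieb relations $e_k e_{k\pm1} e_k = \delta^{-2} e_k$ and $e_k e_l = e_l e_k$ for $|k-l|\ge 2$, the bimodule property of the conditional expectations onto $N' \cap M_{k-1}$, and the Markov relation $E_{M_{k-1}}(e_k) = \delta^{-2}$; the first two come directly from Jones' basic construction, and the Markov relation is precisely the point where extremality of $N\subset M$ is used (so that the planar trace on both shadings agrees with the normalized II$_1$ trace). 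The $*$-structure and the positive-definite trace descend from the faithful normal trace on $M_\infty$, finite-dimensionality of $P_{k,\pm}$ follows from $[M:N]<\infty$, and $\dim P_{0,\pm}=1$ from factoriality of $N$ and $M$. Uniqueness is automatic: once the action on a generating set of tangles is pinned down by conditions (1)--(4), the action on every tangle is forced.

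For the converse direction, given a spherical $C^*$ planar algebra $P$ of modulus $\delta$, I would form the filtered $*$-algebra $\mathrm{Gr}(P) = \bigcup_k P_{k,+}$ whose multiplication is induced by the horizontal-concatenation tangles composed with inclusion tangles, and equip it with the Markov trace supplied by the closure tangle from the $C^*$ planar algebra axioms. Sphericality ensures this trace is well defined (partial-trace functionals from both sides agree) and positive-definiteness upgrades it to an inner product; taking the GNS completion yields a Hilbert space on which each $P_{k,+}$ acts by left multiplication. Passing to double commutants produces a tower of finite von Neumann algebras $N \subset M \subset M_1 \subset \cdots$, with Jones projections provided by the generators $\mathcal{E}^k$. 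The remaining task is to show that this tower is a genuine Jones tower of II$_1$ factors of index $\delta^2$, that the inclusion is extremal, and that the planar algebra $P^{N\subset M}$ computed by the forward direction is canonically isomorphic to $P$. A convenient shortcut is to note that a subfactor planar algebra packages exactly the same data as a standard $\lambda$-lattice in the sense of Popa, so one can invoke Popa's reconstruction theorem from \cite{Po3} to extract an extremal subfactor realizing the given planar algebra directly.

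The principal obstacle will be the converse direction. Factoriality of the von Neumann algebras produced by GNS completion, the type II$_1$ property with the correct index $\delta^2$, extremality, and the canonical identification of the resulting planar algebra with the given $P$ all require the combined strength of sphericality (to make $\mathrm{Gr}(P)$ into a tracial $*$-algebra) and positive-definiteness (to give a Hilbert space of the correct dimension and to control the resulting operators). Delegating these analytical issues to Popa's $\lambda$-lattice reconstruction is the cleanest route; the alternative of carrying out the GNS construction internally within the planar algebra, as in \cite{Jo2}, occupies most of that paper and demands delicate combinatorial verifications that every operator produced has the expected commutation and bimodule properties relative to the Jones projections $e_k$.
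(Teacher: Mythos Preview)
The paper does not actually prove this theorem; it is stated as a cited result from \cite{Jo2} (and \cite{KodSun} for the formulation with generating tangles), with no argument supplied. So there is no ``paper's own proof'' to compare your proposal against.

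That said, your outline is a faithful sketch of how the result is established in the cited sources: the forward direction by defining the partition function on the generating tangles via conditions (1)--(4) and verifying the relations among generators reduce to Jones basic construction identities, and the converse by either the Popa $\lambda$-lattice reconstruction or the internal GNS argument of \cite{Jo2}. Your identification of extremality as the place where the left and right traces agree, and of positive-definiteness and sphericality as what is needed for the converse, are correct. If anything, you could be more explicit that the ``uniqueness'' clause in the forward direction also requires knowing that the listed tangles actually generate the planar operad (a nontrivial combinatorial fact proved in \cite{KodSun}), but this is a minor point for what is, in this paper, simply a quoted background theorem.
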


\begin{remark}
If we remove the extremal condition for subfactors, then the left trace and the right trace on $N'\cap M$ could be different. 
An irreducible subfactor (i.e. $N'\cap M=\mathbb{C}$), is always extremal. 
\end{remark}

 It is well-known from earlier work of Bisch \cite{Bi1} that intermediate subfactors $Q$ of a finite index irreducible subfactor $N\subseteq M$ are in bijective correspondence with so-called {\it biprojections} $e_Q$ in $N' \cap M_1$.
More precisely, $e_Q$ is the projection from $L^2(M)$ onto $L^2(Q)$, which is also an $N$-$N$ bimodule map in $P^{(N\subseteq M)}_{2,+}$, still denoted by $e_Q$.
This result has been reformulated in  \cite{La} and \cite{BiJo2}   in planar algebraic terms:

\begin{theorem}\label{Bisch}\cite{Bi1,BiJo2,La}  Let $N \ss M$  be a finite index, irreducible subfactor of type ${\rm II}_1$, and $P^{(N\ss M)}$ be its planar algebra.
For any intermediate subfactor $Q$, $N \ss Q \ss M$, the biprojection $e_Q \in P^{(N\subseteq M)}_{2,+}$ satisfies the following relations:  
\begin{align*}
&\begin{tikzpicture}
\node at (-1,.25) {$(a)$};
\path [fill=lightgray] (.3,-.5) rectangle (.7,1);
\draw (.3,-.5)--(.3,1);
\draw (.7,1)--(.7,-.5);
\draw [fill=white](0,0) rectangle (1,.5);
\node at (0,.25) [left] {$\$$};
\node at (.5,.25) {$e_Q$};
\node at (1.1,.25)[right] {$=$};
\path [fill=lightgray] (2.3,-.5) rectangle (2.7,1);
\draw (2.3,-.5)--(2.3,1);
\draw (2.7,1)--(2.7,-.5);
\draw [fill=white] (2,0) rectangle (3,.5);
\node at (2.5,.25) {$e_Q$};
\node at (3,.25)[right]{$\$$};
\node at (5,.25) {$(b)$};
\path [fill=lightgray] (6.3,-.5)--(6.3,.6) arc [radius=.2,start angle=180,end angle=0]--(6.7,-.5);
\draw (6.3,-.5)--(6.3,.6);
\draw (6.7,-.5)--(6.7,.6);
\draw (6.7,.6) arc [radius=.2,start angle=0, end angle=180];
\draw [fill=white](6,0) rectangle (7,.5);
\node at (6,.25)[left] {$\$$};
\node at (6.5,.25) {$e_Q$};
\node at (7.5,.25) {$=$};
\path [fill=lightgray] (8.3,-.5)--(8.3,0) arc[radius=.2,start angle=180, end angle=0]--(8.7,-.5);
\draw (8.3,0) arc[radius=.2,start angle=180, end angle=0];
\draw (8.3,0)--(8.3,-.5);
\draw (8.7,0)--(8.7,-.5);
\end{tikzpicture}\\
&\begin{tikzpicture}
\node at (-1,.25) {$(c)$};
\path [fill=lightgray](.3,-.5) rectangle (.7,1);
\path [fill=lightgray] (.7,1)--(.7,.6) arc[radius=.3,start angle=180, end angle=0]--(1.3,.6)--(1.6,.6)--(1.6,1);
\path [fill=lightgray] (.7,-.5)--(.7,-.1) arc[radius=.3,start angle=180, end angle=360]--(1.3,.6)--(1.6,.6)--(1.6,-.1)--(1.6,-.5);
\draw (.3,-.5)--(.3,1);
\draw (.7,.6)--(.7,-.1);
\draw (.7,.6) arc[radius=.3, start angle=180, end angle=0];
\draw (1.3,.6)--(1.3,-.1);
\draw (.7,-.1) arc[radius=.3, start angle=180, end angle=360];
\draw [fill=white] (0,0) rectangle (1,.5);
\node at (0,.25)[left] {$\$$};
\node at (.5,.25) {$e_Q$};
\node at (1.8,.25) [right] {$=c$};
\path [fill=lightgray](3,1) rectangle (3.5,-.5);
\draw (3,1)--(3,-.5);
\node at (3,.25) [left]{$\$$};
\end{tikzpicture}\\
&\begin{tikzpicture}
\node at (-1,.25) {$(d)$};
\path [fill=lightgray] (.3,1)--(.3,-2)--(.7,-2)--(.7,-1.5) arc[radius=.3,start angle=180, end angle=90]--(2.2,-1.2) arc[radius=.3,start angle=90, end angle=0]--(2.5,-2)--(2.8,-2)--(2.8,1)--(2.5,1)--(2.5,-.5) arc[radius=.3,start angle=0,end angle=-90]--(1,-.8) arc[radius=.3,start angle=270,end angle=180]--(.7,1);
\draw (.3,1)--(.3,-2);
\draw (.7,1)--(.7,-.5);
\draw (.7,-.5) arc [radius=.3,start angle=180, end angle=270];
\draw (1,-.8)--(2.2,-.8);
\draw (2.2,-.8) arc[radius=.3,start angle=270, end angle=360];
\draw (2.5,-.5)--(2.5,1);
\draw (2.2,-1.2) arc[radius=.3,start angle=90, end angle=0];
\draw (2.2,-1.2)--(1,-1.2);
\draw (1,-1.2) arc[radius=.3,start angle=90,end angle=180];
\draw (.7,-1.5)--(.7,-2);
\draw (2.5,-1.5)--(2.5,-2);
\draw [fill=white] (0,0) rectangle (1,.5);
\draw [fill=white] (1.5,-.5) rectangle (2,-1.5);
\node at (0,.25)[left] {$\$$};
\node at (1.75,-1.5)[below] {$\$$};
\node at (.5,.25) {$e_Q$};
\node at (1.75,-1){$e_Q$};
\node at (3.5,-.5){$=$};
\path [fill=lightgray] (4.8,1)--(4.8,-2)--(5.2,-2)--(5.2,-.5) arc[radius=.3,start angle=180,end angle=90]--(6.7,-.2) arc[radius=.3,start angle=90,end angle=0]--(7,-2)--(7.3,-2)--(7.3,1)--(7,1)--(7,.5) arc[radius=.3,start angle=0, end angle=-90]--(5.5,.2) arc[radius=.3,start angle=270,end angle=180]--(5.2,1);
\draw (4.8,-2)--(4.8,1);
\draw (5.2,-2)--(5.2,-.5);
\draw (5.2,-.5) arc[radius=.3, start angle=180, end angle=90];
\draw (5.5,-.2)--(6.7,-.2);
\draw (6.7,-.2) arc[radius=.3,start angle=90, end angle=0];
\draw (7,-.5)--(7,-2);
\draw (7,1)--(7,.5);
\draw (7,.5) arc[radius=.3,start angle=0, end angle=-90];
\draw (6.7,.2)--(5.5,.2);
\draw (5.5,.2) arc[radius=.3,start angle=270, end angle=180];
\draw (5.2,.5)--(5.2,1);
\draw [fill=white] (4.5,-1.5) rectangle (5.5,-1);
\draw [fill=white] (6,.5) rectangle (6.5,-.5);
\node at (4.5,-1.25) [left] {$\$$};
\node at (6.25,-.5)[below] {$\$$};
\node at (5,-1.25) {$e_Q$};
\node at (6.25,0){$e_Q$};
\end{tikzpicture}
\end{align*}
with $c = [Q:N ][M:N]^{-1/2}$. 
Conversely, if an operator in $P^{(N\subseteq M)}_{2,+}$ satisfies the above relations, then it is $e_Q$ for some intermediate subfactor $Q$.
  \end{theorem}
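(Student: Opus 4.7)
My plan is to treat the two directions separately, following the operator-algebraic content of relations (a)--(d) and then translating back and forth with the planar algebra picture via Theorem \ref{jones}. For the forward direction, I would define $e_Q \in P^{(N\subseteq M)}_{2,+} = N'\cap M_1$ to be the Jones projection from $L^2(M)$ onto $L^2(Q)$ associated with the conditional expectation $E^M_Q$. Because $Q$ is an intermediate subfactor, $e_Q$ is automatically an $N$--$N$ bimodule map, so it lies in $P^{(N\subseteq M)}_{2,+}$; it is also self-adjoint and a projection. Relation (a) is the statement that $e_Q = e_Q^*$ (equivalently, shading invariance of the $2$-box), which is the self-adjointness of the projection. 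Relation (b) is a capping identity; rewriting the cap as the conditional expectation $E^{M_1}_M$ (up to a factor of $\delta$), and using that $e_Q$ corresponds to the Jones projection, gives $E^{M_1}_M(e_Q)=\delta^{-1}\cdot 1$, which is exactly what the tangle on the right represents.

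For relation (c), I would identify the left-hand tangle with the trace $\tau(e_Q)$, since adding a cup-cap on the side of $e_Q$ and closing up corresponds to $\delta\cdot E^{M}_N(E^{M_1}_M(e_Q))$-type expression. By the standard computation $\tau(e_Q) = [M:Q]^{-1}=[Q:N][M:N]^{-1}$, and matching with the normalization of tangles one obtains the constant $c = [Q:N][M:N]^{-1/2}$. Relation (d) is the key biprojection/coproduct identity: the two-$e_Q$ tangle on the left represents (up to normalization) the Fourier-transformed product $\mathcal{F}^{-1}(\mathcal{F}(e_Q)\mathcal{F}(e_Q))$, i.e.\ the ``coproduct'' $e_Q * e_Q$. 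Since $Q$ is a subfactor with its own Jones projection $e'_N$ in $Q' \cap Q_1$, translating via the basic construction shows the coproduct is proportional to $e_Q$ itself, with the same constant $c$; this is the algebraic content of $e_Q$ being a biprojection in the sense of Bisch.

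For the converse, given a projection $p \in P^{(N\subseteq M)}_{2,+}$ satisfying (a)--(d), I would define
$$ Q \;=\; \{x \in M : p(x\,\hat{1}) \in \overline{Q \hat{1}}\}\quad\text{realized as}\quad Q \;=\; \{x \in M : e_N x p = p x e_N\}$$
(or equivalently, one uses the range of $p$ on $L^2(M)$ and takes $Q := \mathrm{range}(p)\cap M$). The strategy is to first verify, using relations (a)--(b), that $p \geq e_N$, so the range contains $N\hat{1}$; then, using the coproduct relation (d), to show that $\mathrm{range}(p) \cap M$ is multiplicatively closed and a $*$-subalgebra, and finally that relation (c) normalizes the index $[Q:N]$ correctly. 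The identification $p = e_Q$ then follows because both project onto the same subspace of $L^2(M)$. The final consistency with the original formulation of Bisch in \cite{Bi1} is that $p$ must commute with $e_N$ and satisfy $e_N \leq p$, both of which are repackaged inside (b) and (d).

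The main obstacle is the converse: verifying that $\mathrm{range}(p)\cap M$ is a von Neumann subalgebra (not merely a subspace) purely from the planar relations. The trick is to interpret the coproduct identity (d) as the statement $\mathcal{F}(p)$ is a projection up to rescaling, so that $\mathcal{F}(p) = \widetilde{c}\, e_{Q}$ for some projection, and then to Fourier-invert; this converts a multiplicative statement about an intermediate algebra into a relation purely among $2$-boxes. Once this is in hand, the bijective correspondence and the explicit constant $c$ are essentially bookkeeping of $\delta$-factors through the tangles displayed.
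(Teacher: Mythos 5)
The paper never proves Theorem \ref{Bisch}; it is quoted as background with citations to \cite{Bi1,BiJo2,La}, so there is no in-paper argument to measure your sketch against, and it has to be judged against what those references actually establish. Your forward direction follows the standard route (identify $e_Q$ with the Jones projection of $Q\subset M$ sitting in $N'\cap M_1$ and verify the tangle identities), but several of your tangle translations are off. Relation (a) is invariance under moving the $\$$ sign, i.e.\ a rotation by $\pi$ (self-contragrediency of $L^2(Q)$), not self-adjointness. Relation (b) says that capping $e_Q$ reproduces the cap, i.e.\ $e_Q\hat{1}=\hat{1}$, equivalently $e_Q e_N=e_N$; your reading ``$E^{M_1}_M(e_Q)=\delta^{-1}1$'' is not what the picture encodes and is numerically false, since $E^{M_1}_M(e_Q)=[M:Q]^{-1}$. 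Relation (d) is the exchange relation (as the paper remarks immediately after the statement), not the coproduct identity $e_Q * e_Q = c\, e_Q$; the coproduct identity is a consequence obtained by capping instances of (d) (compare the manipulations in Theorem \ref{Thm:Landau} and Lemma \ref{lem:epeq}), and keeping these straight is exactly the $\delta$-bookkeeping you defer.

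More seriously, the converse---the substantial half of the theorem---is only asserted. The hypothesis is an \emph{operator} satisfying (a)--(d), so you must first derive that it is a projection dominating $e_N$; then, for your candidate $Q=\mathrm{range}(p)\cap M$, the crux is showing this is a von Neumann subalgebra of $M$ containing $N$ with $p$ implementing $E^M_Q$. Your plan for that step is to read (d) as ``$\mathcal{F}(p)$ is a multiple of a projection'' and Fourier-invert, which (i) rests on the misreading of (d) noted above, and (ii) amounts to invoking Bisch's original Hilbert-space characterization from \cite{Bi1}. Reducing to \cite{Bi1} is legitimate, but then the actual content left to verify is precisely the equivalence between the planar relations (a)--(d), with the stated constant $c=[Q:N][M:N]^{-1/2}$, and Bisch's conditions---and that careful tangle-to-operator translation is exactly where your sketch is thinnest and partly incorrect. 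As it stands, the proposal is a plausible outline of the strategy of \cite{Bi1,BiJo2,La}, not a proof.
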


The above relations (a)-(d) of $e_Q$ in the above theorem are often referred to {\it an exchange relation} which has been further studied in \cite{La,BiJo2,Liu}.

For $x,y \in N' \cap M_1$, the coproduct of $x$ and $y$ is defined as
\begin{equation*}
\begin{tikzpicture}
\path [fill=lightgray] (.3,1.05)--(2.2,1.05)--(2.2,.5)--(1.8,.5) arc[radius=.55,start angle=0, end angle=180]--(.3,.5);
\path [fill=lightgray] (.3,-.55)--(2.2,-.55)--(2.2,0)--(1.8,0) arc[radius=.55,start angle=0, end angle=-180]--(.3,0);
\draw (.3,-.55)--(.3,1.05);
\draw (2.2,-.55)--(2.2,1.05);
\draw (.7,0) arc [radius=.55, start angle=180, end angle =360];
\draw (.7,.5) arc [radius=.55, start angle=180, end angle =0];
\draw [fill=white] (0,0) rectangle (1,.5);
\draw [fill=white] (1.5,0) rectangle (2.5,.5);
\node at (.5,.25) {$x$};
\node at (2,.25) {$y$};
\node at (0.1,.25) [left]{$x*y=\$$};
\node at (1.6,.25) [left]{$\$$};
\end{tikzpicture}
\end{equation*}

The following result is due to Landau.(See also \cite{GJ} and further applications there).
\begin{theorem}[Landau]\label{Thm:Landau}
	Suppose $e_P$ and $e_Q$ are two biprojections in an irreducible subfactor planar algebra, then $\frac{1}{\delta tr(e_Pe_Q)}e_P* e_Q$ is a projection.
\end{theorem}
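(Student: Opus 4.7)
The plan is to show that $a := e_P * e_Q$ satisfies $a = a^*$ and $a^2 = \delta\,\mathrm{tr}(e_P e_Q)\cdot a$; together these identities force $\frac{1}{\delta\,\mathrm{tr}(e_Pe_Q)}a$ to be a self-adjoint idempotent in $P^{(N\subseteq M)}_{2,+}$, hence a projection.

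First I would verify self-adjointness. The $*$-operation on $P^{(N\subseteq M)}_{2,+}$ corresponds to vertical reflection of a planar tangle. The coproduct tangle displayed just before the theorem is manifestly invariant under such a reflection (its top and bottom halves are mirror images, with the shading and $\$$ positions preserved), and the biprojections $e_P, e_Q$ are themselves self-adjoint, so $(e_P * e_Q)^* = e_P^* * e_Q^* = e_P * e_Q$.

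Next I would compute $a^2$ diagrammatically by stacking two copies of the coproduct tangle using the multiplication tangle of $P^{(N\subseteq M)}_{2,+}$. The resulting diagram contains two adjacent copies of $e_P$ and two adjacent copies of $e_Q$, separated by internal cup-cap pairs. The absorption relation (d) of Theorem \ref{Bisch}, applied to each like pair, collapses them into a single box at the cost of a scalar factor; after both reductions the remaining diagram is precisely the coproduct tangle for $e_P * e_Q$, multiplied by the accumulated scalar. To pin down this scalar I would take traces of both sides of $a^2 = \mu a$, giving $\mu = \mathrm{tr}(a^2)/\mathrm{tr}(a)$; evaluating these traces by closing the coproduct tangles into loops and applying the standard Jones projection identities (parts (2)--(4) of Theorem \ref{jones}) yields $\mu = \delta\,\mathrm{tr}(e_Pe_Q)$.

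The main obstacle I anticipate is the bookkeeping of scalar factors in the absorption step. Relation (d) is stated with a normalization involving the constant $c = [Q:N][M:N]^{-1/2}$ of (c), and each collapse introduces closed loops with value $\delta$ that must be tracked. Ensuring that these combine to exactly $\delta\,\mathrm{tr}(e_P e_Q)$—rather than a close variant—requires careful attention to the orientation conventions of the coproduct tangle and to the shading, so that the arithmetic of the loop values and the constants $c_P, c_Q$ associated to the two biprojections conspires correctly.
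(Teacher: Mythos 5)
Your overall strategy is the same as the paper's: show $a:=e_P*e_Q$ is self-adjoint (which is fine, since $(x*y)^*=x^**y^*$ and biprojections are self-adjoint) and that $a^2=\delta\,tr(e_Pe_Q)\,a$ by applying the exchange relation of Theorem \ref{Bisch} twice to the stacked product of two coproduct tangles. However, the mechanism you describe for the key step would not work as stated. Relation (d) is a scalar-free exchange move; it does not ``absorb'' a like pair into a single box at the cost of a scalar, and in the stacked diagram the two copies of $e_P$ (and likewise of $e_Q$) are not in the configuration of (d) in isolation --- each copy of $e_P$ is linked to a copy of $e_Q$ through the internal cup--cap pairs, so the reduction cannot be performed ``pairwise in $P$ and pairwise in $Q$.'' What the two applications of the exchange relation actually accomplish (this is the content of the paper's displayed computation) is a rearrangement in which one $e_P$ and one $e_Q$ end up multiplied inside a closed loop, whose evaluation is the mixed quantity $\delta\,tr(e_Pe_Q)$, while the remaining $e_P$ and $e_Q$ reassemble into the coproduct $e_P*e_Q$. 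In particular, the constant $c=[Q:N][M:N]^{-1/2}$ of relation (c), and any separate constants $c_P,c_Q$ attached to the two biprojections, never enter; expecting the scalar to be a product of per-biprojection constants is a sign the intended reduction is not the right one.

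Your fallback for pinning down the scalar is also insufficient as described: writing $\mu=tr(a^2)/tr(a)$ requires an independent evaluation of $tr\bigl((e_P*e_Q)^2\bigr)$, and closing that diagram leaves a network containing two $e_P$'s and two $e_Q$'s which the Jones-projection identities (2)--(4) of Theorem \ref{jones} cannot evaluate --- you would again need the biprojection relations (or the fact that the Fourier transform of a biprojection is a multiple of a projection). Once the exchange-relation rearrangement is carried out correctly, the scalar $\delta\,tr(e_Pe_Q)$ is read off the resulting diagram directly, so no trace detour is needed; as written, though, the scalar-identification step is a genuine gap rather than mere bookkeeping.
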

\begin{proof}
Using exchange relation  twice we get the following:

\begin{equation*}
\begin{tikzpicture}
\path [fill=lightgray] (.3,3.05)--(.3,-.55)--(1.25,-.55) arc [radius=.55,start angle=270, end angle=180]--(.7,.5) arc [radius=.55,start angle=180, end angle=90]--(1.25,1.45) arc [radius=.55,start angle=270, end angle =180]--(.7,2.5) arc [radius=.55,start angle=180, end angle=90];
\path [fill=lightgray] (2.5-.3,3.05)--(2.5-.3,-.55)--(2.5-1.25,-.55) arc [radius=.55,start angle=270, end angle=360]--(2.5-.7,.5) arc [radius=.55,start angle=0, end angle=90]--(2.5-1.25,1.45) arc [radius=.55,start angle=270, end angle =360]--(2.5-.7,2.5) arc [radius=.55,start angle=0, end angle=90];
\draw (.3,-.55)--(.3,3.05);
\draw (2.2,-.55)--(2.2,3.05);
\draw (.7,2.5) arc [radius=.55, start angle=180, end angle =0];
\draw (.7,0) arc [radius=.55, start angle=180, end angle =360];
\draw (.7,.5) arc [radius=.55, start angle=180, end angle =0];
\draw (.7,2) arc [radius=.55, start angle=180, end angle =360];
\draw [fill=white] (0,0) rectangle (1,.5);
\draw [fill=white] (1.5,0) rectangle (2.5,.5);
\draw [fill=white] (0,2) rectangle (1,2.5);z
\draw [fill=white] (1.5,2) rectangle (2.5,2.5);
\node at (0,.25) [left] {$\$$};
\node at (0,2.25) [left] {$\$$};
\node at (1.5,.25) [left] {$\$$};
\node at (1.5,2.25)[left] {$\$$};
\node at (.5,.25) {$e_P$};
\node at (.5,2.25) {$e_P$};
\node at (2,2.25) {$e_Q$};
\node at (2,.25) {$e_Q$};
\node at (3,1.25)[right] {$=$};
\path [fill=lightgray] (4.3,3.05)--(7.2,3.05)--(7.2,.5)--(6.8,.5)--(6.8,1.5) arc[radius=.3,start angle=0, end angle=90]--(6.5,2.2) arc [radius=.3,start angle=270, end angle=450]--(5,2.8) arc [radius=.3, start angle=90, end angle=270]--(5,1.8) arc [radius=.3,start angle=90, end angle=180]--(4.7,.5)--(4.3,.5);
\path[fill=lightgray] (5.5,2.2) rectangle (6.5,1.8);
\path[fill=lightgray] (4.3,0)--(4.3,-.55)--(5,-.55) arc[radius=.3,start angle=270, end angle=180]--(4.7,0);
\path[fill=lightgray] (7.2,0)--(7.2,-.55)--(6.5,-.55) arc[radius=.3,start angle=270,end angle=360]--(6.8,0);
\draw (4.3,-.55)--(4.3,3.05);
\draw (4.7,-.25)--(4.7,1.5);
\draw (7.2,-.55)--(7.2,3.05);
\draw (5,1.8) arc[radius=.3,start angle=90, end angle=180];
\draw (5,2.2) arc[radius=.3,start angle=270,end angle=90];
\draw (6.5,1.8) arc [radius=.3, start angle=90, end angle=0];
\draw (6.5,2.2) arc [radius=.3,start angle=270, end angle=450];
\draw (6.5,2.8)--(5,2.8);
\draw (6.8,1.5)--(6.8,-.25);
\draw (5,2.2)--(6,2.2);
\draw (5,1.8)--(6,1.8);
\draw (4.7,-.25) arc[radius=.3, start angle=180, end angle=270];
\draw (6.8,-.25) arc[radius=.3, start angle=0, end angle=-90];
\draw (5,-.55)--(6.5,-.55);
\draw [fill=white] (6,1.5) rectangle (6.5,2.5);
\draw [fill=white](4,0) rectangle (5,.5);
\draw [fill=white](6.5,0) rectangle (7.5,.5);
\draw [fill=white] (5,1.5) rectangle (5.5,2.5);
\node at (4,.25) [left] {$\$$};
\node at (6.5,.25)[left] {$\$$};
\node at (4.5,.25) {$e_P$};
\node at (7,.25) {$e_Q$};
\node at (5.25,1.5)[below] {$\$$};
\node at (6.25,1.5)[below] {$\$$};
\node at (5.25,2) {$e_P$};
\node at (6.25,2) {$e_Q$};
\node at (3,-2)[right] {$=\delta tr(e_Pe_Q)(e_P * e_Q)$};
\end{tikzpicture}
\end{equation*}

The above implies $\frac{1}{\delta tr(e_Pe_Q)}(e_P * e_Q)$ is a projection.
\end{proof}

\section{Angle and commuting square}
\label{Sec:angle}

Suppose $N\subseteq M$ is a finite index subfactor (not necessarily irreducible), $P$ is an intermediate subfactor and $e_P$ is the corresponding biprojection.
Denote $\tau_P=tr(e_P)$. Let $\delta= \sqrt{[M:N]}$ and $\tau={[M:N]}^{-1}$. Note: $tr(e_1)=\tau$.

\noindent \textbf{Notation}:
Define the unit vector $v_P$ in $(N'\cap M_1)_{s.a.}$ as
$$v_P= \frac{e_P-e_1}{{\lVert e_P-e_1\rVert}_2}.$$

For two intermediate subfactors $P,Q$ of a finite index subfactor $N\subseteq M$,
 we denote the quadruple of type $II_1$ factors $$\begin{matrix}
Q &\subset & M \cr
\cup &\ &\cup\cr
N  &\subset & P
\end{matrix},$$ by $(N,P,Q,M).$ 
We call it extremal, if the subfactor $N \subseteq M$ is extermal.

Recall the following definition (\cite{SW}):
\begin{definition}\label{Def:square}
 A quadruple $(N,P,Q,M)$ is called a commuting square if
$E^M_P E^M_Q= E^M_Q E^M_P = E^M_N$.
 A quadruple $(N,P,Q,M)$ is called a co-commuting square if the quadruple $(M,Q_1,P_1,M_1)$ is a commuting square.
 \end{definition}

\begin{definition}\label{alpha}
Suppose $P$ and $Q$ are intermediate subfactors of $N\subseteq M$.
Define the \textbf{angle}, denoted by $\alpha^N_M(P,Q)$, between $P$ and $Q$ as follows:
 \begin{equation*}
 \alpha^N_M(P, Q) =  \cos^{-1} {\langle v_P,v_Q\rangle},
 \end{equation*}
 where ${\langle x, y\rangle}= tr(y^*x)$ and 
 hence  ${\lVert x\rVert}= (tr(x^*x))^{1/2}$.
 \end{definition}
If $N$ and $M$ are clear from the context, we may omit them from ${\alpha}^N_M(P,Q).$
 
As usual, the angle takes only the principal value:
$$\displaystyle 0 \leq \alpha^N_M(P,Q) \leq \pi.$$
Note that $v_P, v_Q \geq 0$, so $\langle v_P,v_Q\rangle \geq 0$.
Therefore,
$$\displaystyle 0 \leq \alpha^N_M(P,Q) \leq \frac{\pi}{2}.$$

 \begin{proposition}
 For a quadruple $(N,P,Q,M)$, 
 $$\alpha(P,Q) = 0  \iff  e_P = e_Q.$$
\end{proposition}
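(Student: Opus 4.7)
The plan is to handle the two directions separately. The reverse implication will be immediate: if $e_P = e_Q$, then $v_P = v_Q$ by definition, so $\langle v_P, v_Q\rangle = \|v_P\|^2 = 1$ and $\alpha(P,Q) = \cos^{-1}(1) = 0$.

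For the forward direction, I would start from $\alpha(P,Q) = 0$, which is equivalent to $\langle v_P, v_Q\rangle = 1$. Since $v_P$ and $v_Q$ are unit vectors in the real inner product space $(N'\cap M_1)_{s.a.}$ equipped with the trace form, the equality case of Cauchy--Schwarz forces $v_P = v_Q$. Unwinding the definition then yields
\[
e_P - e_1 = c\,(e_Q - e_1), \qquad c = \frac{\|e_P - e_1\|_2}{\|e_Q - e_1\|_2} > 0,
\]
equivalently $e_P = (1-c)\,e_1 + c\,e_Q$. (Here I rely on the implicit standing hypothesis $P,Q \ne N$, without which $v_P$ or $v_Q$ is undefined.)

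The crux of the argument will be to pin down $c$ using the projection relation $e_P^2 = e_P$. Because $N\subseteq Q$, the Jones projection $e_1$ for $N\subseteq M$ satisfies $e_1 \leq e_Q$, so $e_1 e_Q = e_Q e_1 = e_1$. A direct expansion then gives
\[
\bigl((1-c)\,e_1 + c\,e_Q\bigr)^2 = (1-c)^2 e_1 + 2c(1-c)\,e_1 + c^2 e_Q = (1-c^2)\,e_1 + c^2\,e_Q,
\]
and comparing this with $(1-c)\,e_1 + c\,e_Q$ in the basis $\{e_1,\,e_Q-e_1\}$ (linearly independent since $Q\ne N$) should force $c^2 = c$. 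Since $c > 0$, this pins down $c=1$ and hence $e_P = e_Q$.

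I do not anticipate a serious obstacle. The only subtlety is noting that $P,Q \ne N$ is a standing hypothesis (otherwise the unit vectors $v_P, v_Q$ are not defined); after dispatching that, the argument reduces to a one-line Cauchy--Schwarz step followed by an algebraic identification coming from $e_P^2 = e_P$ and $e_1 \leq e_Q$.
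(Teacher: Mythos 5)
Your proof is correct and follows essentially the same route as the paper: equality in Cauchy--Schwarz gives $v_P=v_Q$, and then the projection structure forces the scalar $c$ to be $1$. Your expansion of $e_P^2=e_P$ using $e_1\leq e_Q$ is just an unwound version of the paper's one-line observation that $e_P-e_1$ and $e_Q-e_1$ are projections proportional by a positive scalar, hence equal.
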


\begin{proof}
Note that $\alpha(P,Q)=0$ iff $v_P$ is a multiple of $v_Q$. Since both $v_P$ and $v_Q$ are positive and $\lVert v_P\rVert= \lVert v_Q \rVert=1$, it follows that $v_P=v_Q.$  As $(e_P-e_1)$ and $(e_Q-e_1)$ are both projections, they are equal. So $e_P=e_Q$.
\end{proof}

\begin{proposition}\label{commuting}
The quadruple $(N,P,Q,M)$ forms a commuting square iff $\alpha(P,Q)= \frac{\pi}{2}$.
\end{proposition}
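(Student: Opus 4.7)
The plan is to reduce $\alpha(P,Q) = \pi/2$ first to a scalar trace identity and then to an operator identity among the Jones projections on $L^2(M)$. First, since $N \subseteq P$ we have $L^2(N) \subseteq L^2(P)$, hence $e_P e_1 = e_1 e_P = e_1$, so $e_P - e_1$ is a (self-adjoint) projection; the same holds for $Q$. Expanding
\[
\mathrm{tr}\bigl((e_Q - e_1)(e_P - e_1)\bigr) = \mathrm{tr}(e_P e_Q) - \mathrm{tr}(e_Q e_1) - \mathrm{tr}(e_1 e_P) + \mathrm{tr}(e_1)
\]
and using $\mathrm{tr}(e_Q e_1) = \mathrm{tr}(e_1 e_P) = \mathrm{tr}(e_1) = \tau$ shows that $\langle v_P, v_Q \rangle$ vanishes precisely when $\mathrm{tr}(e_P e_Q) = \tau$.

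Next I would identify this trace identity with the commuting square condition. Since $e_P$ acts on $L^2(M)$ as the orthogonal projection onto $L^2(P)$, the operator identity $e_P e_Q = e_1$ on $L^2(M)$ is equivalent to $E^M_P \, E^M_Q = E^M_N$ on $M$, and likewise with $P$ and $Q$ interchanged. The easy direction of the proposition is then immediate: if $(N,P,Q,M)$ is a commuting square, then $e_P e_Q = e_1$, so $\mathrm{tr}(e_P e_Q) = \tau$ and $\alpha(P,Q) = \pi/2$.

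For the converse, assume $\mathrm{tr}(e_P e_Q) = \tau$. The operator $e_P e_Q e_P$ is positive and satisfies $e_P e_Q e_P \geq e_P e_1 e_P = e_1$, since $e_1 \leq e_Q$. Its trace equals $\mathrm{tr}(e_P e_Q) = \tau = \mathrm{tr}(e_1)$, so faithfulness of $\mathrm{tr}$ forces $e_P e_Q e_P = e_1$. Using $(e_Q - e_1)^2 = e_Q - e_1$, I would then compute
\[
\|e_Q e_P - e_1\|_2^2 = \|(e_Q - e_1) e_P\|_2^2 = \mathrm{tr}\bigl(e_P (e_Q - e_1) e_P\bigr) = \mathrm{tr}(e_P e_Q e_P) - \mathrm{tr}(e_1) = 0,
\]
which gives $e_Q e_P = e_1$, and $e_P e_Q = e_1$ follows by taking adjoints.

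The step requiring the most care is the positivity sandwich $e_1 \leq e_P e_Q e_P$: it uses crucially that $e_1 \leq e_P$ and $e_1 \leq e_Q$, which come from $N \subseteq P$ and $N \subseteq Q$. The remainder is routine bookkeeping with the identities $e_P e_1 = e_1 e_P = e_1$ (and their analogues for $Q$), together with faithfulness and traciality of $\mathrm{tr}$ on $M_1$.
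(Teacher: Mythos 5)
Your proof is correct and takes essentially the same route as the paper: both reduce the commuting square condition to the operator identity $e_Pe_Qe_P = e_Pe_Q= e_1$ on $L^2(M)$ and pass between it and the trace condition $\mathrm{tr}(e_Pe_Q)=\tau$ (i.e.\ $\cos\alpha(P,Q)=0$) using that $e_P-e_1$, $e_Q-e_1$ are projections and the trace is faithful. The only difference is that you make explicit, via the sandwich $e_1\leq e_Pe_Qe_P$ and the $\lVert\cdot\rVert_2$ computation, the step the paper leaves implicit in its chain ``$\mathrm{tr}((e_P-e_1)(e_Q-e_1))=0$ iff $(e_P-e_1)(e_Q-e_1)=0$''.
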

\begin{proof}
Note that
  $(N,P,Q,M)$ forms a commuting  square  iff $e_P e_Q= e_1$ iff $(e_P-e_1)(e_Q-e_1)=0$ iff $\alpha(P,Q) = \pi/2.$
\end{proof}

\begin{proposition}\label{Prop: correlation}
For an \textbf{extremal} quadruple $(N,P,Q,M)$,  
\begin{equation*}
\cos \alpha^N_M(P, Q) = corr(e_{P_1},e_{Q_1}),
\end{equation*}
where $corr(x,y):=\langle \frac{x-tr(x)}{\|x-tr(x)\|_2}, \frac{y-tr(y)}{\|y-tr(y)\|_2} \rangle $ is the correlation function. 
\end{proposition}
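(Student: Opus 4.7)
The plan is to reduce both sides to explicit scalar expressions in terms of $tr(e_Pe_Q)$ and $tr(e_{P_1}e_{Q_1})$ respectively, and then to bridge the two via one planar--algebraic identity.

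First, using that $e_N \leq e_P$ (so $e_Pe_N = e_N$), a direct expansion gives $\|e_P - e_1\|_2^2 = \tau_P - \tau$ and $\langle e_P - e_1, e_Q - e_1\rangle = tr(e_Pe_Q) - \tau$, hence
\[
\cos\alpha^N_M(P,Q) \;=\; \frac{tr(e_Pe_Q) - \tau}{\sqrt{(\tau_P - \tau)(\tau_Q - \tau)}}.
\]
Writing $\sigma_P := tr(e_{P_1}) = [M_1:P_1]^{-1} = [P:N]^{-1}$, the correlation expands to
\[
corr(e_{P_1}, e_{Q_1}) \;=\; \frac{tr(e_{P_1} e_{Q_1}) - \sigma_P \sigma_Q}{\sqrt{\sigma_P (1-\sigma_P)\, \sigma_Q (1-\sigma_Q)}}.
\]

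Second, I claim the key identity $[M:N]\, tr(e_P e_Q) = [P:N][Q:N]\, tr(e_{P_1} e_{Q_1})$. Granted this, plugging in $\tau = 1/[M:N]$ and rewriting $\tau_P - \tau = ([P:N]^2 / [M:N])\,\sigma_P(1-\sigma_P)$ (and similarly for $Q$), the scalar $[P:N][Q:N]/[M:N]$ factors out of both the numerator and the denominator of $\cos\alpha^N_M(P,Q)$ and cancels, yielding the desired equality with $corr(e_{P_1}, e_{Q_1})$.

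To establish the key identity, I would use the one--click Fourier rotation tangle $\rho : N'\cap M_1 \to M'\cap M_2$ in the subfactor planar algebra. By rotational invariance of the closure of a closed planar tangle, $tr(\rho(x)\rho(y)) = tr(xy)$ for all $x, y \in N'\cap M_1$; extremality of $N \subseteq M$ is precisely what guarantees that this rotation invariance gives a well--defined $2$-norm isometry at the level of the standard invariant. A direct diagrammatic check against the defining relations (a)--(d) of Theorem~\ref{Bisch} shows that $\rho$ carries the biprojection $e_P$ to a scalar multiple of the biprojection $e_{P_1}$ associated to the dual intermediate $P_1 \subset M_1$; the scalar is then pinned down by $2$-norm preservation, $c_P^2\,\sigma_P = \tau_P$, giving $\rho(e_P) = ([P:N]/\delta)\, e_{P_1}$. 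Substituting this into $tr(\rho(e_P) \rho(e_Q)) = tr(e_P e_Q)$ yields
\[
tr(e_P e_Q) \;=\; \frac{[P:N][Q:N]}{\delta^2}\, tr(e_{P_1} e_{Q_1}),
\]
which is the key identity. The main technical point is verifying that $\rho(e_P)$ satisfies the biprojection relations characterising $e_{P_1}$ for $P_1 \subset M_1$; this is a diagrammatic manipulation, and is precisely where the extremality hypothesis is indispensable, since without it the Fourier rotation fails to preserve the Markov trace and the entire chain breaks.
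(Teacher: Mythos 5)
Your proposal is correct and follows essentially the same route as the paper: both arguments rest on extremality making the Fourier transform (one-click rotation) a $\|\cdot\|_2$-isometry and on the fact that it carries the biprojection $e_P$ to a positive multiple of the dual biprojection $e_{P_1}$. The paper merely packages the bookkeeping differently, applying the rotation directly to the normalized vectors $v_P$ so that the scalar $[P:N]/\delta$ never needs to be computed, whereas you pin that scalar down explicitly and verify the resulting cancellation of $[P:N][Q:N]/[M:N]$.
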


\begin{proof}
Let $\F: N' \cap M_1\to M' \cap M_2$ be the Fourier transform. The subfactor is extremal, so  
\begin{equation}\label{Equ:FPQ}
\cos \alpha^N_M(P, Q) =\langle v_P,v_Q\rangle=\langle \F(v_P),\F(v_Q)\rangle.
\end{equation}

Note that 
$\F(e_P)$ is a multiple of $e_{P_1}$ and $\F(e_1)$ is a multiple of the identity. So 
$$\F(e_P-e_1)=a e_{P_1}-b $$
for some constants $a$ and $b$.
Moreover,
$$\langle \F(e_P-e_1), \F(e_1)\rangle=\langle e_P-e_1, e_1\rangle=0,$$
so 
$$tr(\F(e_P-e_1))=0.$$ 
Therefore 
$$\F(e_P-e_1)=a(e_{P_1}-tr(e_{P_1})).$$ 
Recall that $\|v_P\|_2=1$, so 
$$\F(v_P)=\frac{e_{P_1}-tr(e_{P_1})}{\|e_{P_1}-tr(e_{P_1})\|_2}.$$
Similarly 
$$\F(v_Q)=\frac{e_{Q_1}-tr(e_{Q_1})}{\|e_{Q_1}-tr(e_{Q_1})\|_2}.$$
By Equation \eqref{Equ:FPQ},
\begin{equation*}
\cos \alpha^N_M(P, Q)= corr(e_{P_1},e_{Q_1}),
\end{equation*}

\end{proof}

\begin{definition}\label{beta}
We define the dual angle, denoted by $\beta^N_M(P,Q)$, between $P$ and $Q$ as 
$$\beta^N_M(P,Q):=\alpha^M_{M_1}(P_1, Q_1).$$
\end{definition}
This is similar to \cite{SW}.  As before, if from the context it is clear what $N$ and $M$ are, we may omit them from ${\beta}^N_M(P,Q).$
By duality, we have that 
\begin{align}\label{Equ:dual}
\alpha^N_M(P,Q)&=\beta^N_M(P_1,Q_1).
\end{align}
 
\begin{proposition}\label{co-commuting}
The quadruple $(N,P,Q,M)$ forms a co-commuting square iff $\beta(P,Q)= \frac{\pi}{2}$.
\end{proposition}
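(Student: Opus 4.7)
The plan is to deduce this proposition directly from Proposition \ref{commuting} by unwinding the definitions of $\beta$ and of a co-commuting square. All the work has essentially already been done; the statement is the Fourier/basic-construction dual of the characterization of commuting squares.

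First, I would recall from Definition \ref{beta} that
\begin{equation*}
\beta^N_M(P,Q) = \alpha^M_{M_1}(P_1, Q_1),
\end{equation*}
so the condition $\beta^N_M(P,Q) = \pi/2$ is the same as $\alpha^M_{M_1}(P_1, Q_1) = \pi/2$. Next, I would invoke Proposition \ref{commuting} applied to the subfactor $M \subseteq M_1$ with intermediate subfactors $P_1$ and $Q_1$: this gives that $\alpha^M_{M_1}(P_1, Q_1) = \pi/2$ if and only if the quadruple $(M, P_1, Q_1, M_1)$ is a commuting square, i.e.\ $E^{M_1}_{P_1} E^{M_1}_{Q_1} = E^{M_1}_{Q_1} E^{M_1}_{P_1} = E^{M_1}_M$.

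Finally, I would observe that the commuting square condition is manifestly symmetric in the two intermediate subfactors, so $(M, P_1, Q_1, M_1)$ is a commuting square if and only if $(M, Q_1, P_1, M_1)$ is a commuting square; by Definition \ref{Def:square} the latter is exactly the statement that $(N,P,Q,M)$ is a co-commuting square. Chaining these three equivalences yields the result.

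There is no real obstacle here: the only mild subtlety is the cosmetic swap of $P_1$ and $Q_1$ between the definition of co-commuting square (which uses the order $(M, Q_1, P_1, M_1)$) and the definition of $\beta$ (which uses the order $(P_1, Q_1)$), but this is immediate from symmetry of both the commuting square relation and the inner product $\langle v_{P_1}, v_{Q_1} \rangle = \langle v_{Q_1}, v_{P_1} \rangle$ defining the angle.
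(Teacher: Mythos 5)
Your proposal is correct and is exactly the paper's argument: the paper's proof simply says it follows from Definitions \ref{Def:square}, \ref{beta}, and Proposition \ref{commuting}, and you have spelled out that chain of equivalences, including the harmless swap of $P_1$ and $Q_1$ justified by the symmetry of the commuting square condition. Nothing further is needed.
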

\begin{proof}
Follows from Definitions \ref{Def:square}, \ref{beta}, and Proposition \ref{commuting}. 
\end{proof} 
 
\begin{theorem}\label{Thm: alpha-beta}
For a quadruple $(N,P,Q,M)$, let $\tau_P=tr(e_P)$, $\tau_Q=tr(e_Q)$. Then
\begin{align}\label{Equ:alpha}
\cos \alpha^N_M(P, Q)  &=\frac{tr(e_Pe_Q)-\tau}{\sqrt{\tau_P-\tau}\sqrt{\tau_Q-\tau}}.
\end{align}
If the quadruple is extremal, then
\begin{align}\label{Equ:beta}
\cos \beta^N_M(P, Q) &= \frac{tr(e_Pe_Q)-\tau_P\tau_Q}{\sqrt{\tau_P-\tau_P^2}\sqrt{\tau_Q-\tau_Q^2}}.
\end{align}
\end{theorem}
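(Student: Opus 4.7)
My plan is to prove \eqref{Equ:alpha} by a direct inner-product computation, then to reduce \eqref{Equ:beta} to \eqref{Equ:alpha} applied to the dual quadruple together with Proposition \ref{Prop: correlation}. For \eqref{Equ:alpha} I will use only the containments $N\subset P$ and $N\subset Q$, which force $e_1\leq e_P$ and $e_1\leq e_Q$. Consequently $e_1e_P=e_1e_Q=e_1$, and each of $e_P-e_1$, $e_Q-e_1$ is a projection. Expanding
\[
\cos\alpha^N_M(P,Q)=\langle v_P,v_Q\rangle=\frac{tr((e_Q-e_1)(e_P-e_1))}{\|e_P-e_1\|_2\,\|e_Q-e_1\|_2},
\]
the numerator collapses to $tr(e_Pe_Q)-\tau$ and the squared norms become $\tau_P-\tau$ and $\tau_Q-\tau$, yielding \eqref{Equ:alpha}. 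No extremality is used in this step.

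For \eqref{Equ:beta}, Definition \ref{beta} gives $\cos\beta^N_M(P,Q)=\cos\alpha^M_{M_1}(P_1,Q_1)$, so I apply \eqref{Equ:alpha} to the dual quadruple $(M,P_1,Q_1,M_1)$, noting that its new ``ground trace'' is $[M_1:M]^{-1}=\tau$, to obtain
\[
\cos\beta^N_M(P,Q)=\frac{tr(e_{P_1}e_{Q_1})-\tau}{\sqrt{tr(e_{P_1})-\tau}\,\sqrt{tr(e_{Q_1})-\tau}}.
\]
Here $tr(e_{P_1})=[M_1:P_1]^{-1}=[P:N]^{-1}=\tau/\tau_P$, using $[M:N]=[M:P][P:N]$, and similarly $tr(e_{Q_1})=\tau/\tau_Q$. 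It then remains to express $tr(e_{P_1}e_{Q_1})$ in terms of $tr(e_Pe_Q)$.

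This final translation is where extremality enters: I will equate the expression for $\cos\alpha^N_M(P,Q)$ from \eqref{Equ:alpha} with the expression $corr(e_{P_1},e_{Q_1})$ supplied by Proposition \ref{Prop: correlation}, and substitute $tr(e_{P_1})=\tau/\tau_P$, $tr(e_{Q_1})=\tau/\tau_Q$ (so that $tr(e_{P_1})-tr(e_{P_1})^2=\tau(\tau_P-\tau)/\tau_P^2$). This should cleanly produce the identity
\[
tr(e_{P_1}e_{Q_1})=\frac{\tau}{\tau_P\tau_Q}\,tr(e_Pe_Q).
\]
Plugging this into the expression for $\cos\beta^N_M(P,Q)$ above and simplifying yields \eqref{Equ:beta}. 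The main obstacle is just the algebraic bookkeeping in this last simplification; conceptually the proof is clean once one observes that \eqref{Equ:alpha} applies verbatim to the dual quadruple, and that extremality (through Proposition \ref{Prop: correlation}) lets one convert traces involving $e_{P_1}e_{Q_1}$ back to traces involving $e_Pe_Q$.
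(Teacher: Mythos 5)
Your proof is correct. For \eqref{Equ:alpha} you do exactly what the paper does: expand $\langle v_P,v_Q\rangle$ using $e_1\leq e_P,e_Q$; no extremality is needed, as you note. For \eqref{Equ:beta} the paper's route is shorter: it writes $\cos\beta^N_M(P,Q)=\cos\alpha^M_{M_1}(P_1,Q_1)$ and, applying Proposition \ref{Prop: correlation} to the dual quadruple together with the further duality identifying the next-level biprojections with $e_P,e_Q$, gets $\cos\beta^N_M(P,Q)=corr(e_P,e_Q)$, which it then expands directly. You instead apply \eqref{Equ:alpha} (valid without extremality) to the quadruple $(M,P_1,Q_1,M_1)$, whose ground trace is indeed $[M_1:M]^{-1}=\tau$, and invoke Proposition \ref{Prop: correlation} only for the original quadruple, solving the resulting identity for $tr(e_{P_1}e_{Q_1})=\frac{\tau}{\tau_P\tau_Q}\,tr(e_Pe_Q)$ and substituting; the index bookkeeping $tr(e_{P_1})=[M_1:P_1]^{-1}=[P:N]^{-1}=\tau/\tau_P$ and the final simplification do check out. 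This costs a bit more algebra but buys a cleaner use of duality: you never need the second Fourier transform (the $2$-shift identification the paper compresses into the phrase ``and duality''), and extremality enters exactly once, through Proposition \ref{Prop: correlation} for $N\subset M$. The only implicit hypotheses are the nondegeneracy conditions $P,Q\neq N$ (so $\tau_P,\tau_Q>\tau$, making the division that isolates $tr(e_{P_1}e_{Q_1})$ legitimate) and $P,Q\neq M$; these are already required for $v_P$ and $v_{P_1}$, hence for the angles themselves, to be defined, so they are not a genuine restriction.
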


\begin{proof}
By definition, 
\begin{align*}
\cos \alpha^N_M(P, Q) 
=&\langle v_P,v_Q\rangle \\
=& \frac{tr((e_P-e_1)(e_Q-e_1))}{{\lVert e_P-e_1\rVert}_2{\lVert e_Q-e_1\rVert}_2} \\
=&\frac{tr(e_Pe_Q)-\tau}{\sqrt{\tau_P-\tau}\sqrt{\tau_Q-\tau}}.
\end{align*}

By Proposition \ref{Prop: correlation} and duality,
\begin{align*}
\cos \beta^N_M(P, Q) 
=& \cos \alpha^M_{M_1}(P_1, Q_1)\\ 
=& corr(e_{P}, e_{Q})  \\
=&\frac{tr((e_P-\tau_P)(e_Q-\tau_Q))}{\|e_P-\tau_P\|_2\|e_Q-\tau_Q\|_2} \\
=& \frac{tr(e_Pe_Q)-\tau_P\tau_Q}{\sqrt{\tau_P-\tau_P^2}\sqrt{\tau_Q-\tau_Q^2}}.
\end{align*}

\end{proof}

\begin{corollary}\label{Cor:123}
Consequently 
\begin{align*}
 tr(e_Pe_Q) &\geq \tau, \\
 tr(e_Pe_Q) &\geq \tau_P\tau_Q.
\end{align*}
The equalities hold iff $\alpha^N_M(P,Q)=\pi/2$ and $\beta^N_M(P,Q)=\pi/2$ respectively.
\end{corollary}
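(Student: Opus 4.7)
The plan is to deduce both inequalities directly from Theorem \ref{Thm: alpha-beta} by exploiting the fact that, because the principal-value angles $\alpha^N_M(P,Q)$ and $\beta^N_M(P,Q)$ both lie in $[0, \pi/2]$, their cosines are non-negative. No additional computation is required; the corollary is essentially a restatement of the theorem once one notes the sign of the denominators.

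For the first inequality, rearrange Equation \eqref{Equ:alpha} to
\begin{equation*}
tr(e_Pe_Q) - \tau \;=\; \cos \alpha^N_M(P,Q) \cdot \sqrt{\tau_P - \tau}\,\sqrt{\tau_Q - \tau}.
\end{equation*}
Since $e_1 \leq e_P$ and $e_1 \leq e_Q$, we have $\tau_P \geq \tau$ and $\tau_Q \geq \tau$, so the two square roots are non-negative, and $\cos \alpha^N_M(P,Q) \geq 0$ by the range of $\alpha$. Thus $tr(e_Pe_Q) \geq \tau$, with equality precisely when $\cos \alpha^N_M(P,Q) = 0$, i.e.\ $\alpha^N_M(P,Q) = \pi/2$ (the degenerate case $\tau_P=\tau$ or $\tau_Q=\tau$ means $e_P=e_1$ or $e_Q=e_1$, in which both sides are trivially $\tau$ and the angle statement does not apply).

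For the second inequality, apply the same argument to Equation \eqref{Equ:beta}:
\begin{equation*}
tr(e_Pe_Q) - \tau_P\tau_Q \;=\; \cos \beta^N_M(P,Q) \cdot \sqrt{\tau_P - \tau_P^2}\,\sqrt{\tau_Q - \tau_Q^2}.
\end{equation*}
The square roots $\sqrt{\tau_P(1-\tau_P)}$ and $\sqrt{\tau_Q(1-\tau_Q)}$ are non-negative, and $\cos \beta^N_M(P,Q) \geq 0$, yielding $tr(e_Pe_Q) \geq \tau_P\tau_Q$ with equality iff $\beta^N_M(P,Q) = \pi/2$.

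There is no real obstacle here: the corollary is a direct two-line consequence of Theorem \ref{Thm: alpha-beta} together with the elementary sign considerations outlined above. The only minor subtlety is noting that the biprojection inequality $e_P \geq e_1$ ensures $\tau_P \geq \tau$ (so the first denominator is real), and the trivial bounds $0 \leq e_P \leq 1$ ensure $\tau_P(1-\tau_P) \geq 0$ for the second.
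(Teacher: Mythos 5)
Your proof is correct and is essentially the argument the paper intends: the corollary follows from Theorem \ref{Thm: alpha-beta} together with the observation (already made after Definition \ref{alpha}) that $v_P, v_Q \geq 0$ forces $\cos\alpha, \cos\beta \geq 0$, with equality exactly at angle $\pi/2$. Your handling of the degenerate cases and the sign of the square-root factors is a harmless elaboration of the same reasoning.
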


\begin{corollary}\label{geq}
If the extremal quadruple $(N,P,Q,M)$ is a commuting square, then 
$[M:Q]\geq [P:N]$ and $[M:P] \geq [Q:N]$.
\end{corollary}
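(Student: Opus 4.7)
The plan is simply to chain the two inequalities for $\operatorname{tr}(e_Pe_Q)$ from Corollary~\ref{Cor:123} against each other, then translate the resulting inequality on biprojection traces into an inequality on Jones indices.

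First I would record the dictionary between biprojection traces and indices. Since $e_R$ is the projection from $L^2(M)$ onto $L^2(R)$ for any intermediate subfactor $R$, we have
$$\tau_R = \operatorname{tr}(e_R) = [M:R]^{-1}.$$
In particular $\tau = [M:N]^{-1}$, $\tau_P = [M:P]^{-1}$ and $\tau_Q = [M:Q]^{-1}$.

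Now use the commuting square hypothesis in two complementary ways. By Proposition~\ref{commuting}, a commuting square means $\alpha^N_M(P,Q)=\pi/2$, so Theorem~\ref{Thm: alpha-beta}, formula \eqref{Equ:alpha}, forces
$$\operatorname{tr}(e_Pe_Q) = \tau.$$
On the other hand, the quadruple is extremal, so the second bound in Corollary~\ref{Cor:123} (derived from the dual angle $\beta$ via Proposition~\ref{Prop: correlation}) applies and gives
$$\operatorname{tr}(e_Pe_Q) \geq \tau_P\tau_Q.$$
Combining the two, $\tau \geq \tau_P\tau_Q$, i.e.\ $[M:P]\,[M:Q] \geq [M:N]$.

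To finish, I would substitute the multiplicativity of the index: $[M:N] = [M:P][P:N] = [M:Q][Q:N]$. Dividing $[M:P][M:Q] \geq [M:P][P:N]$ by $[M:P]$ gives $[M:Q] \geq [P:N]$, and dividing by $[M:Q]$ gives $[M:P] \geq [Q:N]$. There is no real obstacle here — the whole corollary is a one-line consequence of the two bounds already proved in Corollary~\ref{Cor:123}. The only point that needs care is making sure the $\beta$-bound (the second inequality) really is available, which is exactly why the hypothesis that the quadruple is \emph{extremal} is built into the statement.
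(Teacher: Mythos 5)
Your argument is correct and is essentially the paper's own proof: both combine the equality $\operatorname{tr}(e_Pe_Q)=\tau$ coming from $\alpha(P,Q)=\pi/2$ with the bound $\operatorname{tr}(e_Pe_Q)\geq \tau_P\tau_Q$ from Corollary~\ref{Cor:123} (which is where extremality enters), and then finish with multiplicativity of the index. The only cosmetic difference is that the paper phrases the resulting inequality as $[M:N]\geq[P:N][Q:N]$ rather than $[M:P][M:Q]\geq[M:N]$, which is the same statement.
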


\begin{proof}
By Corollary \ref{Cor:123}, if the  quadruple $(N,P,Q,M)$ is a commuting square, then $\alpha^N_M(P,Q)=\pi/2$ and $tr(e_Pe_Q)=\tau$. So
$\tau \geq \tau_P\tau_Q$,
namely,
$$[M:N]\geq [P:N][Q:N].$$
Note that 
\begin{align*}
[M:N]&= [M:P][P:N],\\
[M:N]&= [M:Q][Q:N].
\end{align*} 
So 
\begin{align*}
[M:Q] \geq [P:N], \\
[M:P] \geq [Q:N].
\end{align*}
\end{proof}

\begin{remark}
This result was proved in Proposition 1.7 in \cite{Po5}, 
\end{remark}

\begin{definition}
For a quadruple $(N,P,Q,M)$, the following are equivalent:
\begin{enumerate}
\item $\tau_P \tau_Q=\tau$; 
\item $[M:P]=[Q:N]$;
\item $[M:Q]=[P:N]$. 
\end{enumerate}
We call the quadruple a parallelogram if one of the above equivalent conditions holds.
\end{definition}

In general, it is not true that $\alpha^N_M(P,Q)= \beta^N_M(P,Q)$( See for instance Fact \ref{inclusion}). One can have a quadruple which is commuting, but not co-commuting. 
Surprisingly, the following result holds:

\begin{theorem}\label{v}
If an extremal quadruple $(N,P,Q,M)$ is a parallelogram, then $\alpha^N_M(P,Q)=\beta^N_M(P,Q)$.
\end{theorem}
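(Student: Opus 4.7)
My plan is a direct computation using the two explicit formulas established in Theorem \ref{Thm: alpha-beta}, exploiting the extremality of the quadruple. Since we are in the extremal parallelogram case, both formulas are available and the hypothesis $\tau_P\tau_Q = \tau$ makes them visibly coincide.

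First, I would invoke
\begin{align*}
\cos \alpha^N_M(P,Q) &= \frac{tr(e_Pe_Q)-\tau}{\sqrt{\tau_P-\tau}\sqrt{\tau_Q-\tau}}, \\
\cos \beta^N_M(P,Q) &= \frac{tr(e_Pe_Q)-\tau_P\tau_Q}{\sqrt{\tau_P-\tau_P^2}\sqrt{\tau_Q-\tau_Q^2}}.
\end{align*}
The parallelogram assumption $\tau_P\tau_Q=\tau$ immediately equates the numerators, so it suffices to check that the denominators agree.

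For the denominators, I would factor using $\tau = \tau_P\tau_Q$:
\begin{align*}
\tau_P - \tau &= \tau_P - \tau_P\tau_Q = \tau_P(1-\tau_Q), \\
\tau_Q - \tau &= \tau_Q - \tau_P\tau_Q = \tau_Q(1-\tau_P),
\end{align*}
so $(\tau_P-\tau)(\tau_Q-\tau) = \tau_P\tau_Q(1-\tau_P)(1-\tau_Q)$. On the other side, $(\tau_P-\tau_P^2)(\tau_Q-\tau_Q^2)=\tau_P\tau_Q(1-\tau_P)(1-\tau_Q)$ as well. Hence the two denominators are equal, and we obtain $\cos\alpha^N_M(P,Q) = \cos\beta^N_M(P,Q)$. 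Since both angles lie in $[0,\pi/2]$, the arccosine is injective there, so $\alpha^N_M(P,Q)=\beta^N_M(P,Q)$.

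There is no real obstacle here; the only prerequisite is that the $\beta$-formula in Theorem \ref{Thm: alpha-beta} genuinely needs extremality (it uses Proposition \ref{Prop: correlation} via Fourier duality), which is precisely our standing hypothesis. So the argument is essentially a one-line algebraic identity once the two cosine formulas are on the table.
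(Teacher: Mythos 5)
Your proposal is correct and follows essentially the same route as the paper: the paper's proof of Theorem \ref{v} likewise plugs the parallelogram condition $\tau=\tau_P\tau_Q$ into the two cosine formulas of Theorem \ref{Thm: alpha-beta} and concludes $\cos\alpha^N_M(P,Q)=\cos\beta^N_M(P,Q)$, hence equality of the angles. You merely make the algebraic identification of numerators and denominators explicit, and your remark that extremality is what licenses the $\beta$-formula is accurate.
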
 
Hence we may consider $\alpha^N_M(P,Q)$ and $\beta^N_M(P,Q)$ as opposite angles of the parallelogram.

\begin{proof}
If a quadruple $(N,P,Q,M)$ is a parallelogram, namely 
$$\tau= \tau_P\tau_Q,$$
then by Theorem \ref{Thm: alpha-beta}, 
$$\cos \alpha^N_M(P, Q)= \cos \beta^N_M(P, Q).$$
So 
$$\alpha^N_M(P, Q)= \beta^N_M(P, Q).$$
\end{proof}

 Motivated by \cite{SW} we try to  investigate the angle $\alpha^N_M(P,Q)$ in terms of Pimsner-Popa basis \cite{PiPo}.
 In this paper, by Pimsner-Popa basis we mean a `right basis'. Thus the condition for the set $\{\lambda_i:i\in I\}\subset M$ (for some finite index set $I$)
 to be a right basis for $M/N$ would be $\sum_{i=1}^n \lambda_i e_1 {\lambda_i}^*=1$ or equivalently,
 $x = \sum_{i=1}^n E_N(x \lambda_i){\lambda_i}^*
=\sum_{i=1}^n \lambda_i E_N({\lambda_i}^* x)$ for all $x\in M$. The set $\{\lambda_i:i\in I\}$ will be called a `left basis' for $M/N$ if $\{{\lambda_i}^*:i\in I\}$ is a right basis.

\begin{remark}
A set $\{\lambda_i:i\in I\}\subset M$ is call a two-sided basis for $M/N$, if it is both a left basis and a right basis. It is an open question whether any finite index, (irreducible) subfactor has a two-sided basis.
\end{remark}

\begin{proposition}\label{basis}
 Consider intermediate subfactors $P$ and $Q$ of $N\subset M$. Let $\{\lambda_i\}$(resp. $\{\mu_j\}$) be (right) basis
  for $P/N$ (resp $Q/N$). Then,
 \begin{equation}
  \cos(\alpha(P,Q))= \displaystyle \frac{\sum_{i,j} tr(E^M_N({\lambda_i}^*\mu_j) {\mu_j}^*\lambda_i)-1}{\sqrt{[P:N]-1}  \sqrt{[Q:N]- 1}}
 \end{equation}
 \end{proposition}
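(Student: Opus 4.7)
The plan is to reduce the formula to the expression for $\cos\alpha^N_M(P,Q)$ already computed in Theorem \ref{Thm: alpha-beta}, namely
\[
\cos \alpha^N_M(P,Q) = \frac{tr(e_Pe_Q)-\tau}{\sqrt{\tau_P-\tau}\sqrt{\tau_Q-\tau}},
\]
and then rewrite the numerator and denominator in basis language.

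First I would handle the denominator. Since $\tau_P = tr(e_P) = [M:P]^{-1}$ and $\tau = [M:N]^{-1}$, we have $\tau_P/\tau = [P:N]$, so $\tau_P - \tau = \tau([P:N]-1)$, and similarly $\tau_Q - \tau = \tau([Q:N]-1)$. Thus $\sqrt{\tau_P-\tau}\sqrt{\tau_Q-\tau} = \tau\sqrt{[P:N]-1}\sqrt{[Q:N]-1}$, which supplies the denominator of the claimed formula together with a factor of $\tau$ that will cancel against a factor from the numerator.

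Next I would rewrite the biprojections using the given right bases. The standard fact is that if $\{\lambda_i\}$ is a right basis for $P/N$, then $e_P = \sum_i \lambda_i e_1 \lambda_i^*$ in $M_1$; this is verified by checking that the right-hand side acts as the orthogonal projection onto $L^2(P)$: for $x\in M$, $(\sum_i \lambda_i e_1 \lambda_i^*)\hat{x} = \sum_i \widehat{\lambda_i E_N(\lambda_i^* x)} = \widehat{E^M_P(x)}$, the last equality being the defining basis property of $E^M_P$. Analogously $e_Q = \sum_j \mu_j e_1 \mu_j^*$.

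With these expressions I would compute
\[
tr(e_Pe_Q) = \sum_{i,j} tr(\lambda_i e_1 \lambda_i^* \mu_j e_1 \mu_j^*) = \sum_{i,j} tr(\lambda_i E^M_N(\lambda_i^*\mu_j)e_1\mu_j^*),
\]
using $e_1 x e_1 = E^M_N(x)e_1$ for $x\in M$. Moving $\mu_j^*$ cyclically and applying the identity $tr_{M_1}(y e_1) = \tau\, tr_M(y)$ for $y \in M$ gives
\[
tr(e_Pe_Q) = \tau \sum_{i,j} tr\bigl(E^M_N(\lambda_i^*\mu_j)\,\mu_j^*\lambda_i\bigr).
\]
Substituting this and the denominator computation into Theorem \ref{Thm: alpha-beta} yields the formula after cancelling $\tau$ from numerator and denominator.

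There is no real obstacle — the only subtle point is justifying the identity $e_P = \sum_i \lambda_i e_1 \lambda_i^*$ for a right basis of $P/N$ (as opposed to a basis for $M/N$); this is the one ingredient that must be checked carefully, but it follows immediately from the conditional expectation characterization of a Pimsner–Popa basis.
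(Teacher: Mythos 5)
Your proposal is correct and follows essentially the same route as the paper: establish $e_P=\sum_i\lambda_i e_1\lambda_i^*$ and $e_Q=\sum_j\mu_j e_1\mu_j^*$ from the Pimsner--Popa property, substitute into the trace formula $\cos\alpha(P,Q)=\frac{tr(e_Pe_Q)-\tau}{\sqrt{\tau_P-\tau}\sqrt{\tau_Q-\tau}}$, and simplify using $e_1xe_1=E^M_N(x)e_1$ and the Markov property $tr(ye_1)=\tau\,tr(y)$. The only cosmetic difference is that you evaluate the denominator directly from $\tau_P=[M:P]^{-1}$ while the paper rewrites $tr(e_P)$ through the basis, which amounts to the same identity $\tau_P-\tau=\tau([P:N]-1)$.
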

 \begin{proof}
  Firstly observe that for any intermediate subfactor, say $P$, of $N\subset M$ and basis $\{\lambda_i\}$ we have $e^M_P= \sum_i {\lambda_i}e_1 {\lambda_i}^*$. This follows trivally from the following array of equations and is well-known. For any $x\in M$, we have:
  \begin{align*}
   & (\sum_i \lambda_i e_1 {\lambda_i}^*)(x\Omega)\\
   & \qquad = (\sum_i {\lambda}_i (E^M_N({\lambda_i}^*x)))\Omega\\
   & \qquad = (\sum_i {\lambda}_i E^P_N({\lambda_i}^*E^M_P(x))) \Omega\\
   & \qquad = E^M_P(x) \Omega\\
   & \qquad = e^M_P(x\Omega).
   \end{align*}
In the above $\Omega$ denotes the cyclic vector for  the standard Hilbert space $L^2(M).$

In our notation, we have $e^M_Q= \sum_j \mu_j e_1 {{\mu}_j}^*$. Then it follows from Definition \ref{alpha} that:
\begin{align*}
 & \cos(\alpha(P,Q))\\
 & \qquad= \displaystyle \frac{tr(e_Pe_Q)- \tau}{\sqrt{tr(e_P)-\tau}\sqrt{tr(e_Q)-\tau}}\\
 & \qquad = \displaystyle \frac{tr(\sum_{i,j} {\lambda}_i e_1 {\lambda_i}^* \mu_j e_1 {\mu_j}^*)-\tau}{\sqrt{tr(\sum_i {\lambda}_i e_1 {{\lambda}_i}^*)- \tau}\sqrt{tr(\sum_j\mu_j e_1 {\mu_j}^*)-\tau}}\\
 & \qquad = \displaystyle \frac{\sum_{i,j} tr(e_1 E^M_N({\lambda_i}^*\mu_j) {\mu_j}^*\lambda_i)-\tau}{\sqrt{\sum_i tr(e_1  {\lambda_i}^*\lambda_i)- \tau} \sqrt{\sum_j tr(e_1  {\mu_j}^*\mu_j)- \tau}}\\
 & \qquad = \displaystyle \frac{\sum_{i,j} tr(E^M_N({\lambda_i}^*\mu_j) {\mu_j}^*\lambda_i)-1}{\sqrt{[P:N]-1}  \sqrt{[Q:N]- 1}}
 \end{align*}
This completes the proof.
 \end{proof}

\begin{fact}\label{inclusion}
 Consider intermediate subfactors $P$ and $Q$ such that $N\subset P\subset Q \subset M$. Then the following two
 two equations hold (as is seen from the definitions):
 
 $$\cos(\alpha(P,Q))= \sqrt{\frac{[P:N]-1}{[Q:N]-1}}.$$ and 
 $$\cos(\beta(P,Q))= \sqrt{\frac{[M:Q]-1}{[M:P]-1}}.$$
 
 This shows that $\alpha(P,Q)$ and $\beta(P,Q)$ may not be equal. For example,
 consider subfactors  $N\subset P\subset Q\subset M$ such that 
 $[P:N]=2,[M:Q]= 3,[Q:P]=5$. Then by the above two formulas we get $\cos(\alpha(P,Q))=\frac{1}{3}$ and
 $\cos(\beta(P,Q))= \frac{1}{\sqrt{7}}$.
\end{fact}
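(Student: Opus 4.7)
The plan is to reduce everything to formula \eqref{Equ:alpha} of Theorem \ref{Thm: alpha-beta} after a single observation about the biprojections. Because $P \subset Q$ forces $L^2(P) \subset L^2(Q)$ as subspaces of $L^2(M)$, the corresponding Jones projections satisfy $e_P \leq e_Q$, hence $e_Pe_Q = e_P$. This collapses the numerator $tr(e_Pe_Q) - \tau$ appearing in \eqref{Equ:alpha} to $\tau_P - \tau$, so the whole expression simplifies to $\sqrt{(\tau_P-\tau)/(\tau_Q-\tau)}$. I would then use the standard identification $\tau_P = tr(e_P) = [P:N]/[M:N]$ for a biprojection of an intermediate subfactor, which rewrites $\tau_P - \tau = \tau([P:N]-1)$ and similarly $\tau_Q - \tau = \tau([Q:N]-1)$; the common factor $\tau$ cancels and the first formula drops out.

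For the dual angle, I would invoke Definition \ref{beta} to write $\beta^N_M(P,Q) = \alpha^M_{M_1}(P_1, Q_1)$, and then note that the basic construction reverses inclusions: from $N \subset P \subset Q \subset M$ in the original tower, the dual chain in $M \subset M_1$ becomes $M \subset Q_1 \subset P_1 \subset M_1$, with the standard Jones indices $[Q_1:M] = [M:Q]$ and $[P_1:M] = [M:P]$. Applying the first formula to this new chain, combined with the symmetry of $\alpha$ in its two arguments, yields $\cos\beta^N_M(P,Q) = \sqrt{([M:Q]-1)/([M:P]-1)}$. The numerical example at the end is then direct substitution using $[Q:N] = [Q:P][P:N] = 10$ and $[M:P] = [M:Q][Q:P] = 15$.

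There is essentially no obstacle here: the argument is pure unwinding of definitions combined with one structural observation, $e_Pe_Q = e_P$ when $P \subset Q$. The only mild care point is tracking the inclusion-reversal in the basic construction when passing from $\alpha$ to $\beta$, which is what forces the appearance of $[M:Q]$ rather than $[Q:N]$ in the numerator of the dual formula.
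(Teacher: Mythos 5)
Your argument is correct and is exactly the ``seen from the definitions'' computation the paper has in mind: with $e_P\le e_Q$ one gets $tr(e_Pe_Q)=\tau_P$, and Equation \eqref{Equ:alpha} together with $\tau_P=[P:N]\,\tau$, $\tau_Q=[Q:N]\,\tau$ gives the first formula, while Definition \ref{beta} applied to the reversed dual chain $M\subset Q_1\subset P_1\subset M_1$ (with $[Q_1:M]=[M:Q]$, $[P_1:M]=[M:P]$) gives the second, and the numerics $[Q:N]=10$, $[M:P]=15$ check out.
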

\begin{proposition}
  Consider factors of type $II_1$ such that $R, N\subset P,Q\subset M, S$. Then $\alpha^N_M(P,Q)= \alpha^N_S(P,Q)$
  and  $\beta^N_M(P,Q)= \beta^R_M(P,Q)$.
\end{proposition}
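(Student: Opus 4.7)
The strategy is to prove the two equalities separately, with the second reducing to the first via Fourier duality.

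For $\alpha^N_M(P,Q)=\alpha^N_S(P,Q)$, I would invoke the Pimsner--Popa basis formula of Proposition \ref{basis}. Fix right bases $\{\lambda_i\}$ for $P/N$ and $\{\mu_j\}$ for $Q/N$; by construction they live inside $P$ and $Q$ and depend only on the inclusions $N\subset P$, $N\subset Q$, not on the ambient factor $M$ or $S$. The indices $[P:N]$ and $[Q:N]$ in the denominator are likewise intrinsic. It therefore suffices to verify that the numerators agree, i.e.\ that $tr(E^M_N(\lambda_i^*\mu_j)\mu_j^*\lambda_i) = tr(E^S_N(\lambda_i^*\mu_j)\mu_j^*\lambda_i)$ for every $i,j$. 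Since $\lambda_i^*\mu_j \in M$, the tower property $E^S_N = E^M_N \circ E^S_M$ together with $E^S_M|_M=\mathrm{id}$ gives $E^S_N(\lambda_i^*\mu_j)=E^M_N(\lambda_i^*\mu_j)$, and uniqueness of the normalized trace on the ${\rm II}_1$ factor $M$ yields $tr_S|_M = tr_M$. So the two expressions are literally equal.

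For $\beta^N_M(P,Q)=\beta^R_M(P,Q)$, I would unwind Definition \ref{beta}. Write $M_1=\langle M, e_N^M\rangle$ and $\tilde M_1=\langle M, e_R^M\rangle$ for the basic constructions of $N\subset M$ and $R\subset M$ on $L^2(M)$; then $\beta^N_M(P,Q)=\alpha^M_{M_1}(P_1,Q_1)$ and $\beta^R_M(P,Q)=\alpha^M_{\tilde M_1}(\tilde P_1,\tilde Q_1)$, with $P_1:=\langle M,e_P^M\rangle$ and likewise for the others. The first observation is that $M_1\subset\tilde M_1$: using $\tilde M_1=J_M R' J_M$, the containment $R\subset N$ yields $e_N^M\in N'\subset R'$, and since $J_M e_N^M J_M=e_N^M$ (the conditional expectation $E^M_N$ being $*$-preserving) we get $e_N^M=J_M e_N^M J_M\in J_M R' J_M=\tilde M_1$. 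The second observation is that, as concrete von Neumann subalgebras of $B(L^2(M))$, one has $P_1=\tilde P_1$ and $Q_1=\tilde Q_1$—both pairs are generated by the same operators $M$ and $e_P^M$ (resp.\ $M$ and $e_Q^M$), independent of the ambient basic construction. Thus the quadruple $M\subset P_1,Q_1\subset M_1$ embeds inside $\tilde M_1$, and the first half of the proposition applied to this quadruple (with base $M$ and big factor extended from $M_1$ to $\tilde M_1$) gives $\alpha^M_{M_1}(P_1,Q_1)=\alpha^M_{\tilde M_1}(P_1,Q_1)$, which is the desired equality.

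The only real obstacle is the structural bookkeeping behind the identification $P_1=\tilde P_1$ and the containment $M_1\subset\tilde M_1$; once these are in place, the reduction to the first half is immediate. Note that no extremality or irreducibility hypothesis enters anywhere: Proposition \ref{basis} applies to any finite-index inclusion, and the basic-construction identifications used are purely operator-algebraic.
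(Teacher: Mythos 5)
Your proof is correct and follows essentially the same route as the paper, whose entire proof is the single line that the statement follows from Proposition \ref{basis}: your first paragraph is exactly that reduction (intrinsic bases, $E^S_N|_M=E^M_N$, uniqueness of the trace), and your treatment of $\beta$ via $P_1=\tilde P_1$, $Q_1=\tilde Q_1$, $M_1\subset\tilde M_1$ is just the duality bookkeeping the paper leaves implicit in Definition \ref{beta}. No gaps; you simply supply details the authors omit.
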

\begin{proof}
 This follows from Proposition \ref{basis}.
\end{proof}
\begin{definition}\label{p and q}
 Consider the quadruple of type $II_1$ factors $(N,P,Q,M).$ Let $\{\lambda_i\}$(resp. $\{\mu_j\}$) be a basis for $P/N$ (resp. $Q/N$).
 Define two self-adjoint operators $p$ and $q$ as follows:
 $$p:= \sum_{i,j}{\lambda_i}\mu_j e_1 {\mu_j}^*{\lambda_i}^*~ and ~q:= \sum_{i,j}\mu_j \lambda_i e_1 {\lambda_i}^* {\mu_j}^*.$$
In general, $p$ and $q$ are not projections. Later we will see that they always have same spectrum and have the same trace.
\end{definition}

\begin{lemma}\label{Lem:p=1}
Following the notations in Definition \ref{p and q}, $\{\lambda_i\mu_j\}$ is a basis for $M/N$ iff $p=1$, and $\{\mu_j\lambda_i\}$ is a basis for $M/N$ if and only if $q=1$.
\end{lemma}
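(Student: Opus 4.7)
The plan is to reduce the lemma to a single well-known characterization of Pimsner–Popa bases and then apply it to the two products of choice. The characterization in question is: a finite subset $\{\eta_k\}\subset M$ is a right basis for $M/N$ if and only if $\sum_k \eta_k e_1 \eta_k^*=1_M$, where $e_1$ is the Jones projection of $N\subset M\subset M_1$. This fact is already implicit in the proof of Proposition \ref{basis}, which established $e_P^M=\sum_i \lambda_i e_1\lambda_i^*$ for a right basis $\{\lambda_i\}$ of $P/N$; applying exactly the same computation with $(P,\{\lambda_i\})$ replaced by $(M,\{\eta_k\})$ yields one direction.

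First I would record this characterization formally. For the forward direction one uses the definition $x=\sum_k \eta_k E^M_N(\eta_k^* x)$ for $x\in M$ and the fact that $e_1 y\Omega = E^M_N(y)\Omega$ for $y\in M$, so that $\sum_k \eta_k e_1 \eta_k^* (x\Omega) = \sum_k \eta_k E^M_N(\eta_k^* x)\Omega = x\Omega$ for every $x$, giving $\sum_k \eta_k e_1\eta_k^*=1$. Conversely, if $\sum_k \eta_k e_1\eta_k^*=1$, then applying both sides to $x\Omega$ and using the same identity yields $\sum_k \eta_k E^M_N(\eta_k^* x)=x$, which is the defining property of a right basis.

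Next I would simply unfold the definitions of $p$ and $q$. Since $(\lambda_i\mu_j)^*=\mu_j^*\lambda_i^*$,
\begin{equation*}
p=\sum_{i,j}\lambda_i\mu_j\, e_1\, \mu_j^*\lambda_i^* = \sum_{i,j}(\lambda_i\mu_j)\, e_1\, (\lambda_i\mu_j)^*,
\end{equation*}
so by the characterization, $p=1$ if and only if $\{\lambda_i\mu_j\}_{i,j}$ is a right basis for $M/N$. Likewise
\begin{equation*}
q=\sum_{i,j}(\mu_j\lambda_i)\, e_1\, (\mu_j\lambda_i)^*,
\end{equation*}
so $q=1$ if and only if $\{\mu_j\lambda_i\}_{i,j}$ is a right basis for $M/N$.

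There is essentially no obstacle here; the only thing to be careful about is the order of multiplication when taking adjoints, so that the grouping $(\lambda_i\mu_j)e_1(\lambda_i\mu_j)^*$ matches the definition of $p$ exactly (and similarly for $q$). Once that bookkeeping is correct, the lemma follows immediately from the basis-sum characterization recalled in the first step.
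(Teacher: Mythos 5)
Your proposal is correct and is essentially the paper's argument: the paper's one-line proof ("follows from the definition of Pimsner--Popa basis") rests on exactly the characterization $\sum_k \eta_k e_1 \eta_k^* = 1$ that you spell out, combined with the regrouping $p=\sum_{i,j}(\lambda_i\mu_j)e_1(\lambda_i\mu_j)^*$ and $q=\sum_{i,j}(\mu_j\lambda_i)e_1(\mu_j\lambda_i)^*$. You merely make explicit the equivalence between the sum condition and the expansion property, which the paper already builds into its definition of a right basis.
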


\begin{proof}
Follows from the definition of Pimsner-Popa basis.
\end{proof}

\begin{lemma}\label{independent}
The definition above (of $p$ and $q$) does not depend on the basis we have chosen.
 \end{lemma}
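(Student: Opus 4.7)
The plan is to reduce basis-independence of $p$ (and, symmetrically, $q$) to the following general claim: for any $X \in N' \cap M_1$ and any right Pimsner--Popa basis $\{\lambda_i\}$ of $P/N$, the element $\sum_i \lambda_i X \lambda_i^*$ does not depend on the choice of $\{\lambda_i\}$.

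First, I would rewrite
\[ p = \sum_i \lambda_i \Bigl( \sum_j \mu_j e_1 \mu_j^* \Bigr) \lambda_i^* = \sum_i \lambda_i\, e_Q\, \lambda_i^*. \]
Since the biprojection $e_Q$ is intrinsic to the intermediate subfactor $Q$ (Theorem \ref{Bisch}), this already settles independence of $p$ in the $\{\mu_j\}$ and reduces the problem to the claim above with $X = e_Q \in N' \cap M_1$. The same maneuver applied to $q = \sum_j \mu_j e_P \mu_j^*$ handles the analogous statement for $q$.

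To prove the claim, given a second right basis $\{\lambda'_k\}$ of $P/N$, I would use the Pimsner--Popa expansion in $P$ to write $\lambda'_k = \sum_i \lambda_i b_{ik}$ with $b_{ik} := E_N(\lambda_i^* \lambda'_k) \in N$. The key auxiliary identity to establish is the dual expansion
\[ \lambda_i^* = \sum_k b_{ik}\, {\lambda'_k}^*, \]
which follows by applying the $\{\lambda'_k\}$-expansion to $\lambda_i \in P$ and taking adjoints (using $E_N(y)^* = E_N(y^*)$). Once this is in hand, substituting $\lambda'_k = \sum_i \lambda_i b_{ik}$ into $\sum_k \lambda'_k X {\lambda'_k}^*$, using that the $b_{ik}, b_{jk}^* \in N$ commute with $X \in N'$, and then successively collapsing $\sum_j b_{jk}^* \lambda_j^* = {\lambda'_k}^*$ and $\sum_k b_{ik} {\lambda'_k}^* = \lambda_i^*$, produces $\sum_i \lambda_i X \lambda_i^*$ in a few lines.

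The only genuinely substantive step is the dual identity $\lambda_i^* = \sum_k b_{ik} {\lambda'_k}^*$; everything else is bookkeeping with the commutation of $N$ with $X$. I expect that to be the main (but mild) obstacle: it amounts to the observation that, after taking adjoints, a right Pimsner--Popa basis yields precisely the coefficients needed to reexpand the old basis in terms of the new one.
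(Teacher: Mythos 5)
Your proof is correct and follows essentially the same route as the paper: rewrite $p=\sum_i\lambda_i e_Q\lambda_i^*$ (so independence in $\{\mu_j\}$ is automatic since $e_Q$ is intrinsic), then expand one basis of $P/N$ in the other, commute the $N$-valued coefficients past $e_Q\in N'$, and collapse using the right-basis property. Your dual identity $\lambda_i^*=\sum_k E_N(\lambda_i^*\lambda'_k){\lambda'_k}^*$ is exactly the collapse step the paper uses (there in the form $\sum_i E^P_N(\psi_j^*\lambda_i)\lambda_i^*=\psi_j^*$), so the two arguments coincide up to which basis is expanded in which.
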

 \begin{proof}
  Suppose, $\{\psi_j:j\in I\}$ is another basis for $P/N$. Then it is easy to see that:
  
  \begin{align*}
   \sum_i \lambda_i e_Q {\lambda_i}^*= & \sum_{i} \{\sum_j \psi_j E^P_N({\psi_j}^*\lambda_i)\}e_Q{\lambda_i}^*\\
   &= \sum_{i,j}\psi_je_QE^P_N({\psi_j}^*\lambda_i){\lambda_i}^*\\
   & = \sum_j \psi_je_Q\{\sum_i  E^P_N({\psi_j}^*\lambda_i){\lambda_i}^*\}\\
   &= \sum_j \psi_j e_Q{\psi_j}^*.
  \end{align*}
  As already observed in the proof of Proposition \ref{basis},
 $e_Q= \sum_j \mu_j e_1 {{\mu}_j}^*$. Thus $p= \sum_i \lambda_i e_Q {\lambda_i}^*.$
This shows that $p$ is independent of basis chosen. Similar proof works for $q$.
 \end{proof}

\begin{proposition}\label{q}

 Consider again $N\subset P,Q\subset M$ and let $\{\lambda_i\}$(resp. $\{\mu_j\}$) be a basis for $P/N$ (resp. $Q/N$). Then the following are equivalent:
 \begin{enumerate}
 \item $\alpha(P,Q)= \pi/2$
 
 \item  $q:= \sum_{i,j}\mu_j \lambda_i e_1 {\lambda_i}^* {\mu_j}^*$ is a projection such that $q \geq e_P$.
  
 \item $p:= \sum_{i,j}{\lambda_i}\mu_j e_1 {\mu_j}^*{\lambda_i}^* $ is a projection such that $p\geq e_Q.$
\end{enumerate}
 \end{proposition}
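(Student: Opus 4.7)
The plan is to establish the equivalence (1) $\Leftrightarrow$ (2); the equivalence (1) $\Leftrightarrow$ (3) then follows by interchanging the roles of $P$ and $Q$ (together with the bases $\{\lambda_i\}$ and $\{\mu_j\}$). First I would rewrite $q = \sum_j \mu_j e_P \mu_j^*$ using $e_P = \sum_i \lambda_i e_1 \lambda_i^*$, as in the proof of Proposition \ref{basis}. The inequality $q \geq e_P$ is automatic: one can always choose a Pimsner-Popa basis of $Q/N$ with $1$ among its elements, in which case one summand of $q$ equals $e_P$; since Lemma \ref{independent} guarantees basis-independence of $q$, this bound holds for every basis. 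Hence the substantive content of (2) is that $q$ is a projection.

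For (1) $\Rightarrow$ (2), I would use Proposition \ref{commuting} to rephrase $\alpha(P,Q)=\pi/2$ as the commuting square condition $E^M_P(y) = E^M_N(y)$ for all $y \in Q$. Expanding
\begin{equation*}
q^2 = \sum_{j,k} \mu_j e_P \mu_j^* \mu_k e_P \mu_k^* = \sum_{j,k} \mu_j E^M_P(\mu_j^* \mu_k)\, e_P\, \mu_k^*
\end{equation*}
via the exchange identity $e_P y e_P = E^M_P(y) e_P$ applied with $y = \mu_j^* \mu_k \in Q$, the commuting square hypothesis replaces $E^M_P(\mu_j^* \mu_k)$ by $E^M_N(\mu_j^* \mu_k) \in N$. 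The Pimsner-Popa identity $\sum_j \mu_j E^M_N(\mu_j^* \mu_k) = \mu_k$ then collapses the double sum to $\sum_k \mu_k e_P \mu_k^* = q$, giving $q^2 = q$.

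For (2) $\Rightarrow$ (1), the core step is the Pythagoras-type identity
\begin{equation*}
tr(q^2) - tr(q) = \tau_P \sum_{j,k} \|u_{j,k}\|_2^2, \qquad u_{j,k} := E^M_P(\mu_j^* \mu_k) - E^M_N(\mu_j^* \mu_k) \in P\ominus N.
\end{equation*}
I would compute $tr(q^2)$ by the same exchange-relation manipulation as above, and $tr(q) = \tau_P \sum_j tr(\mu_j^* \mu_j)$ by expanding $\mu_j^* \mu_j = \sum_k E^M_N(\mu_j^* \mu_k)\, \mu_k^* \mu_j$ via the dual-basis identity; here I use the formula $tr(e_P z) = \tau_P tr(z)$ for $z \in M$, which follows from $\langle M, e_P \rangle$ being the basic construction for $P \subset M$. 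Subtraction yields $\tau_P \sum_{j,k} tr(u_{j,k} E^M_P(\mu_k^* \mu_j))$; writing $E^M_P(\mu_k^* \mu_j) = u_{j,k}^* + E^M_N(\mu_k^* \mu_j)$, the cross-term $tr(u_{j,k} E^M_N(\mu_k^* \mu_j))$ vanishes by $N$-bimodularity of $E^M_N$ and $E^M_N(u_{j,k}) = 0$, leaving $tr(u_{j,k} u_{j,k}^*) = \|u_{j,k}\|_2^2$. This trace identity is the main technical obstacle; once it is in hand, the projection hypothesis $q^2 = q$ forces every $u_{j,k}$ to vanish. Choosing the basis so that $\mu_1 = 1$ then gives $E^M_P(\mu_k) = E^M_N(\mu_k)$ for every $k$, and the right-basis expansion $y = \sum_k \mu_k E^M_N(\mu_k^* y)$ for $y \in Q$, combined with the $P$-bimodularity of $E^M_P$, extends this to $E^M_P|_Q = E^M_N|_Q$, i.e., $\alpha(P,Q) = \pi/2$ by Proposition \ref{commuting}.
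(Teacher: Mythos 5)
Your proposal is correct. The direction $(1)\Rightarrow(2)$ is essentially the paper's: the same computation $q^2=\sum_{j,k}\mu_j E^M_P(\mu_j^*\mu_k)e_P\mu_k^*=q$ via the exchange identity and the commuting--square hypothesis. Where you differ, you differ in two genuine ways. First, the paper does not treat $q\geq e_P$ as automatic; it verifies $e_Pq=e_P$ directly from the commuting square and the basis property. Your observation that $q\geq e_P$ holds for \emph{every} quadruple (pick a right basis of $Q/N$ containing $1$, so one summand of $q=\sum_j\mu_je_P\mu_j^*$ is $e_P$, then invoke Lemma \ref{independent}) is true, but the existence of a basis containing $1$ is the one unproved ingredient: it deserves a line (pull-down lemma $\langle Q,e^Q_N\rangle e^Q_N=Qe^Q_N$ plus comparison of projections in the ${\rm II}_1$ factor $\langle Q,e^Q_N\rangle$ to write $1-e^Q_N=\sum_{j\geq2}\mu_je^Q_N\mu_j^*$). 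Second, and more substantially, your proof of $(2)\Rightarrow(1)$ takes a different route: the paper uses only the relation $e_Pq=e_P$, takes traces to get $\sum_j tr(E^M_P(\mu_j)\mu_j^*)=1$, and then computes with $r=\sum_j\mu_j^*e_P\mu_j$ that $tr(e_Pe_Q)=tr(re_1)=\tau$, so the numerator in the angle formula vanishes; you instead use only the projection property $q^2=q$, through the identity $tr(q^2)-tr(q)=\tau_P\sum_{j,k}\|E^M_P(\mu_j^*\mu_k)-E^M_N(\mu_j^*\mu_k)\|_2^2$ (which I have checked), to force $E^M_P|_Q=E^M_N|_Q$. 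Both are valid. The paper's argument is shorter and shows that $e_Pq=e_P$ alone already gives $\alpha=\pi/2$; yours shows that ``$q$ is a projection'' alone suffices and, combined with your first observation, that condition $(2)$ could be weakened to just that, and your trace identity is quantitative, measuring the failure of the commuting square in $\|\cdot\|_2$. If you prefer to avoid the basis-containing-$1$ fact in the final step, expand both entries to get $E^M_P(x^*y)=E^M_N(x^*y)$ for all $x,y\in Q$ and set $x=1$.
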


\begin{proof}
$(1)\Rightarrow (2)$

That $q$ is a projection is easy and was observed in \cite{SW}. We prove it for sake of completeness.
 By the second line of the proof of Proposition \ref{basis}, $q= \sum_i {\mu}_i e_P {\mu_i}^*$ and hence $q=q^*$.

 Then,
 \begin{align*}
  q^2 = &  \sum_{i,j} \mu_i e_P {\mu_i}^*\mu_j e_P {\mu_j}^*\\
  & = \sum_{i,j} {\mu_i} E^M_P({\mu_i}^*\mu_j) e_P {\mu_j}^*\\
  & = \sum_{i,j} \mu_i E^M_P E^M_Q({\mu_i}^*\mu_j)e_P {\mu_j}^*\\
  & = \sum_{i,j} \mu_i E^M_N({\mu_i}^*\mu_j)e_P {\mu_j}^*~[\textrm{applying Proposition}~ \ref{commuting}]\\
  &= \sum_j \mu_j e_P {\mu_j}^*~ [\textrm{since} \{\mu_j\} \textrm{is a basis for}~ Q/N]\\
  & = q.
 \end{align*}
Now we show that $(e_P) q= e_P$.
\begin{align*}
 (e_P) q= & \sum_je_P \mu_j e_P {\mu_j}^*\\
 &= \sum_je_P E^M_P(\mu_j){\mu_j}^*\\
 & = \sum_j e_P E^M_P E^M_Q(\mu_j) {\mu_j}^*\\
 & = \sum_j e_P E^M_N(\mu_j) {\mu_j}^*~ [\textrm{applying Proposition} ~\ref{commuting}]\\
 & = e_P ~[\textrm{since} ~\{\mu_j\} ~\textrm{is a basis for}~ Q/N]
\end{align*}
Thus $q$ is projection such that $q\geq e_P$. 
This completes the proof of $(1)\Rightarrow (2)$.
\bigskip

$(2)\Rightarrow (1)$

$(e_P)q= e_P$ implies $\sum_j e_P E^M_P(\mu_j) {\mu_j}^*= e_P$. Taking trace to both sides we get, 
\begin{equation}\label{eq}
\sum_j tr( E^M_P(\mu_j){\mu_j}^*)=1.
\end{equation}

 Then from the definition of angle it follows easily that
\begin{equation}\label{de}
\cos(\alpha(P,Q))= \displaybreak \frac{tr(e_Pe_Q)-\tau}{\sqrt{tr(e_P)-\tau}\sqrt{tr(e_Q)-\tau}}
\end{equation}
Put $\displaystyle r= \sum_j {\mu_j}^* e_P \mu_j$. Thus, 
\begin{eqnarray*}
   tr(re_1)& = & tr(\sum_j{\mu_j}^*e_P\mu_je_1)\\
   & = & tr(e_P\sum_j\mu_je_1{\mu_j}^*)\\
   & = &tr(e_P e_Q)~~[\textrm{since}~\sum_j\mu_je_1{\mu_j}^*=e_Q].
\end{eqnarray*}
Thus it follows from Equation \ref{de} that:
\begin{equation}\label{c}
 \cos(\alpha(P,Q))= \displaybreak \frac{tr(re_1)-\tau}{\sqrt{tr(e_P)-\tau} \sqrt{tr(e_Q)- \tau}}
\end{equation}

On the other hand,
\begin{align*}
 re_1= & \sum_j {\mu_j}^* e_P \mu_j e_1\\
 & = \sum_j {\mu_j}^* e_P \mu_j e_P e_1~~~[\textrm{since}~~e_Pe_1= e_1]\\
 &= \sum_j {\mu_j}^* E^M_P(\mu_j) e_1
\end{align*}

Thus $tr(re_1)= \tau tr({\mu_j}^* E^M_P(\mu_j))= \tau tr(E^M_P(\mu_j) {\mu_j}^*)$. Then Equation (\ref{eq}) implies that $tr(re_1)= \tau.$
Thus by Equation(\ref{c}) $\alpha(P,Q)= \pi/2.$

This completes the proof of $(2)\Rightarrow (1)$.
\bigskip

$(1)\Leftrightarrow (3)$
\smallskip

Simply observe that $\alpha(P,Q)= \alpha(Q,P)$. The rest follows from above two implications.
This completes the proof.
\end{proof}
\bigskip

\begin{fact}
   $q=e_P$ if and only if $Q=N$. Similarly $p=e_Q$ if and only if $P=N$.
 \begin{proof}
 Firstly observe, by Markov property of trace,
 $$tr(q)= tr(\sum_j \mu_j e_P {\mu_j}^*)= \frac{\sum_j tr(\mu_j {\mu_j}^*)}{[M:P]}.$$
But as $\{\mu_j\}$ is a basis for $Q/N,~~ \sum_j\mu_j {\mu_j}^*= [Q:N]$. Thus
\begin{equation}\label{trq}
tr(q)= \frac{[M:N]}{[M:P][M:Q]}.
\end{equation}
Suppose $q= e_P$. After taking trace  on both sides we get $ [M:N]= [M:Q]$ implying $Q=N$.

Conversely $Q=N$ implies $tr(q)= tr(e_P)$ (See Equation (\ref{trq})). Since by Proposition \ref{q} $q\geq e_P$, it follows that $q=e_P$, as $tr$ is faithful.
\end{proof}

\end{fact}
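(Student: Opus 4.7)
The plan is to derive a closed formula for $tr(q)$ in terms of the Jones indices, from which both directions drop out.

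First, use the identity $e_P = \sum_i \lambda_i e_1 \lambda_i^*$ (already established in the proof of Proposition \ref{basis}) to collapse the double sum and rewrite
$$q = \sum_j \mu_j e_P \mu_j^*.$$
Taking traces and applying the trace property $tr(\mu_j e_P \mu_j^*) = tr(e_P \mu_j^* \mu_j)$, together with the Markov relation $tr(e_P x) = [M:P]^{-1} tr(x)$ for $x \in M$, gives
$$tr(q) = \frac{1}{[M:P]} \sum_j tr(\mu_j^* \mu_j).$$
To evaluate the sum, apply $tr$ to the basis identity $\sum_j \mu_j e_1 \mu_j^* = e_Q$: using $tr(e_1 x) = [M:N]^{-1} tr(x)$ for $x \in M$ together with $tr(e_Q) = [M:Q]^{-1}$ yields $\sum_j tr(\mu_j^* \mu_j) = [Q:N]$. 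Combining,
$$tr(q) = \frac{[Q:N]}{[M:P]} = \frac{[M:N]}{[M:P][M:Q]}.$$

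With this formula in hand, the forward direction is immediate: if $q = e_P$, then taking traces gives $[M:P]^{-1} = [M:N] / ([M:P][M:Q])$, forcing $[M:Q] = [M:N]$ and hence $Q = N$. For the converse, note that when $Q = N$ one may choose $\{1\}$ as a basis for $Q/N$, in which case the definition of $q$ degenerates directly to $\sum_i \lambda_i e_1 \lambda_i^* = e_P$; this conclusion is independent of the original choice of basis by Lemma \ref{independent}. The symmetric claim $p = e_Q \iff P = N$ follows at once by exchanging the roles of $P, Q$ and of the two bases.

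I do not expect any serious obstacle. The only subtlety is in the converse, where one must either select a basis containing $1$ (which trivializes the formula for $q$) or, alternatively, invoke Proposition \ref{q}: since $Q = N$ makes the quadruple trivially a commuting square, one has $q \geq e_P$, and the equality of traces derived above together with faithfulness of $tr$ upgrades this to $q = e_P$.
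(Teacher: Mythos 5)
Your proposal is correct and follows essentially the same route as the paper: the Markov-property computation giving $tr(q)=\frac{[M:N]}{[M:P][M:Q]}$, the forward direction by comparing traces, and the converse via $q\geq e_P$ together with faithfulness of the trace. The one small variation is your primary argument for the converse (taking $\{1\}$ as a basis for $Q/N$ and invoking Lemma \ref{independent}), which is a clean alternative that avoids appealing to Proposition \ref{q} in the degenerate case $Q=N$, where the angle $\alpha(P,Q)$ is not literally defined even though the commuting-square identity $E^M_PE^M_N=E^M_N$ holds trivially.
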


\begin{proposition}\label{J}
  Consider again $N\subset P,Q\subset M$ and let $\{\lambda_i\}$(resp. $\{\mu_j\}$) be a basis for $P/N$ (resp. $Q/N$). Define $p$ and $q$ as in Proposition \ref{q}. Then $JpJ=q,$ where $J$ is the ususal modular conjugation operator on $L^2(M).$ 
\end{proposition}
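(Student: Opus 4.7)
The plan is to compute $JpJ$ and $q$ as operators on $L^2(M)$ by evaluating their action on the dense subspace $M\Omega$ and showing the two actions coincide. The only ingredients needed are the defining properties of the modular conjugation $J$ together with the two equivalent right-basis identities for $E_P$ and $E_Q$ recalled in the paper.

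To start, I would use that for any $a\in M$, $JaJ$ acts on $L^2(M)$ as right multiplication by $a^*$, while $Je_1J=e_1$ because $e_1$ is the projection onto the $J$-invariant subspace $L^2(N)$. Pushing $J\cdot J$ term by term through $p=\sum_{i,j}\lambda_i\mu_j e_1\mu_j^*\lambda_i^*$ and evaluating at $\xi\Omega$ for $\xi\in M$ gives
\[
JpJ(\xi\Omega)=\sum_{i,j}E_N(\xi\lambda_i\mu_j)\,\mu_j^*\lambda_i^*\,\Omega,
\]
while the direct expansion of $q$ on the same vector yields
\[
q(\xi\Omega)=\sum_{i,j}\mu_j\lambda_i\,E_N(\lambda_i^*\mu_j^*\xi)\,\Omega.
\]

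The heart of the argument is then to show both expressions collapse to the common double sum $\sum_{i,j}\mu_j\,E_N(\mu_j^*\xi\lambda_i)\,\lambda_i^*\,\Omega$. For the first, I would sum over $j$ using the form $E_Q(y)=\sum_j E_N(y\mu_j)\mu_j^*$ to obtain $JpJ(\xi\Omega)=\sum_i E_Q(\xi\lambda_i)\lambda_i^*\Omega$, and then apply the opposite form $E_Q(y)=\sum_j\mu_j E_N(\mu_j^*y)$ to reach the common expression. Symmetrically, I would sum over $i$ in the second expression using $E_P(y)=\sum_i\lambda_i E_N(\lambda_i^*y)$ to get $q(\xi\Omega)=\sum_j\mu_j E_P(\mu_j^*\xi)\Omega$, and then apply $E_P(y)=\sum_i E_N(y\lambda_i)\lambda_i^*$ to obtain the same common form. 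Thus $JpJ$ and $q$ agree on $M\Omega$ and, by density, coincide as bounded operators on $L^2(M)$.

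The only subtlety, and the main bookkeeping point, is that the two equivalent forms of the Pimsner--Popa basis expansion must be applied on \emph{opposite} sides so that the double sums align; using the same form on both sides would not manifestly exhibit the equality. Lemma \ref{independent} guarantees that the argument is independent of the particular bases chosen, and no further structural input (irreducibility, commuting square, etc.) is required.
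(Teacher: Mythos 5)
Your proof is correct and is essentially the paper's own argument: both proofs evaluate $JpJ$ and $q$ on the dense subspace $M\Omega$ and collapse them, via the two forms of the Pimsner--Popa expansion (together with $E^M_N = E^Q_N E^M_Q = E^P_N E^M_P$), to the same common expression $\sum_{i,j}\mu_j E^M_N(\mu_j^* x\lambda_i)\lambda_i^*\,\Omega$, passing through the same intermediate forms $\sum_i E^M_Q(x\lambda_i)\lambda_i^*\Omega$ and $\sum_j \mu_j E^M_P(\mu_j^* x)\Omega$. The only cosmetic difference is that the paper first rewrites $p=\sum_i\lambda_i e_Q\lambda_i^*$ and $q=\sum_j\mu_j e_P\mu_j^*$ (Lemma \ref{independent}) and applies $J$ directly to vectors, whereas you conjugate the double-sum definition term by term using that $JaJ$ is right multiplication by $a^*$ and $Je_1J=e_1$.
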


 \begin{proof}
 Firstly we know $p=\sum_i{\lambda}_ie_Q{{\lambda}_i}^*$ and $q=\sum_j{\mu}_je_P{{\mu}_j}^*.$ Let us denote by $\Omega$ the cyclic vector for  the standard Hilbert space $L^2(M).$ Then for any $x\in M$,
 \begin{align*}
  & JpJ(x\Omega)= Jp(x^*\Omega)\\
  & \qquad \qquad  = J(\sum_i{\lambda}_ie_Q({{\lambda}_i}^*x^*\Omega))\\
  & \qquad \qquad =\sum_i J({\lambda}_iE^M_Q({\lambda_i}^*x^*)\Omega)\\
  & \qquad \qquad = \sum_i(E^M_Q(x{\lambda}_i){{\lambda}_i}^*)\Omega\\
  & \qquad \qquad = \sum_i(\sum_j\mu_jE^Q_N\{{\mu_j}^*E^M_Q(x\lambda_i)\}{\lambda_i}^*)\Omega~~~~~~~~~~~~\textrm{[since}~\{\mu_j\}~~\textrm{is a basis for}~ Q/N]\\
  & \qquad \qquad = \sum_{i,j}(\mu_j E^Q_N\{E^M_Q({\mu_j}^*x{\lambda}_i)\}{\lambda_i}^*)\Omega\\
  &\qquad \qquad = \sum_{i,j}(\mu_jE^M_N({\mu_j}^*x{\lambda}_i){\lambda_i}^*)\Omega.
 \end{align*}
On the other hand the following array of equations hold true;
\begin{align*}
& q(x\Omega) = (\sum_j{\mu}_je_P{{\mu}_j}^*)(x\Omega)\\
&\qquad \qquad = \sum_j (\mu_j E^M_P({\mu_j}^*x))\Omega\\
& \qquad \qquad = \sum_j (\mu_j(\sum_i E^P_N\{E^M_P({\mu_j}^*x)\lambda_i\}{\lambda_i}^*))\Omega ~~~~~~~~~~~\textrm{[since}~\{\lambda_i\}~~\textrm{is a basis for}~ P/N]\\
& \qquad \qquad = \sum_{i,j}(\mu_j E^P_N(E^M_P({\mu_j}^*x\lambda_i)){\lambda_i}^*)\Omega\\
  &\qquad \qquad = \sum_{i,j}(\mu_jE^M_N({\mu_j}^*x{\lambda}_i){\lambda_i}^*)\Omega.
\end{align*}

Thus we see that $JpJ=q$. This completes the proof.
\end{proof}

The following result is well-known, for example see Proposition 2.7 in \cite{Bi2}:\begin{lemma}\label{ocneanu}
 Let $N\subset M$ be an inclusion of $II_1$ factors with finite index, and let $\{m_i:i\in I\} \subset M$ be a Pimsner-Popa basis(not necessarily orthonormal)
 for $M/N$. Let us also  denote by $tr_{N^{\prime}}$  the unique normalized trace on $N^{\prime}=N^{\prime}\cap \mathcal{B}(L^2(M)).$ Then the unique $tr_{N^{\prime}}$-preserving conditional expectation is given by the following map $\phi$:
 $$\phi(x)={[M:N]}^{-1}\sum_im_ix{m_i}^*.$$
where $x\in N^{\prime}.$
\end{lemma}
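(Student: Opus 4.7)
The plan is to verify that $\phi$ is the (unique) $tr_{N'}$-preserving conditional expectation from $N'$ onto its subfactor $M' = M' \cap \mathcal{B}(L^2(M))$; Takesaki's theorem will then supply the uniqueness statement. The key tools are the two identities that characterize a right Pimsner--Popa basis: the resolution $\sum_i m_i e_1 m_i^* = 1$ in $\mathcal{B}(L^2(M))$, and its consequence $\sum_i m_i m_i^* = [M:N]$, obtained by applying $E_M^{M_1}$ (which sends $e_1 \mapsto [M:N]^{-1}$) to the first.

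First I would verify that $\phi$ fixes $M'$ pointwise: for $x \in M'$ the $m_i \in M$ commute with $x$, hence $\phi(x) = [M:N]^{-1}\bigl(\sum_i m_i m_i^*\bigr)\,x = x$.

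The main step is to show $\phi(N') \subseteq M'$. The cleanest approach is to transport through the modular conjugation $J$ on $L^2(M)$: using the standard identifications $JN'J = M_1$ and $JM'J = M$, the $J$-conjugate map $\tilde\phi: M_1 \to \mathcal{B}(L^2(M))$ defined by $\tilde\phi(z) := J\phi(JzJ)J = [M:N]^{-1}\sum_i (Jm_iJ)\, z\, (Jm_iJ)^*$ is, by direct computation, $M$-bimodular (since $JmJ \in M'$ commutes with each $m_i$), sends $e_1 \mapsto [M:N]^{-1}$, and fixes $M$ pointwise (the latter again using $\sum_i m_i m_i^* = [M:N]$). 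These three properties characterize $E_M^{M_1}$, so $\tilde\phi = E_M^{M_1}$. In particular $\tilde\phi$ has range in $M$, which conjugates back to $\phi(N') \subseteq JMJ = M'$. I expect the most delicate part of the argument to be establishing the $M$-bimodularity of $\tilde\phi$ and checking it agrees with $E_M^{M_1}$ on the generators $M \cup \{e_1\}$ of $M_1$; both reduce to the two key identities above together with straightforward rearrangements. A more pedestrian alternative avoiding $J$ would expand $ym_i$ and $m_i^* y$ via $z = \sum_j m_j E_N^M(m_j^* z)$ and use that $x \in N'$ commutes with the $N$-valued coefficients, but this route is considerably messier because $\{m_i\}$ is in general only one-sided.

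Finally, trace preservation is a corollary of the same transport: $tr_{N'}(\phi(x)) = tr_{M_1}(\tilde\phi(JxJ)) = tr_{M_1}(E_M^{M_1}(JxJ)) = tr_{M_1}(JxJ) = tr_{N'}(x)$, using that $J$ intertwines the traces on $N'$ and $M_1$ and that $E_M^{M_1}$ is trace-preserving by the Markov property of the basic construction. Invoking Takesaki's theorem then identifies $\phi$ as the unique $tr_{N'}$-preserving conditional expectation onto $M'$.
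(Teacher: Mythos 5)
Your argument is correct, but note that the paper itself gives no proof of Lemma \ref{ocneanu} at all: it is quoted as well known, with a pointer to Proposition 2.7 of \cite{Bi2}. So your write-up supplies a self-contained proof where the paper only cites one, and the route you choose (conjugate $\phi$ by the modular conjugation $J$, using $JN'J=M_1$, $JM'J=M$, and identify the conjugated map with $E^{M_1}_M$) is essentially the standard argument behind the cited result. Two small points should be made explicit to close the reasoning. First, the computation $\tilde\phi(e_1)=[M:N]^{-1}$ uses $Je_1J=e_1$ (i.e.\ $J$ preserves $L^2(N)\subset L^2(M)$); this is standard but is the step that turns $\sum_i m_ie_1m_i^*=1$ into the desired identity for the $J$-conjugated basis elements. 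Second, "agreeing with $E^{M_1}_M$ on the generators $M\cup\{e_1\}$" is not by itself enough for a linear map, so you should say why your three properties pin the map down: in finite index $M_1=\operatorname{span} Me_1M$, and $M$-bimodularity together with $\tilde\phi(e_1)=[M:N]^{-1}$ determines $\tilde\phi$ on this span, forcing $\tilde\phi=E^{M_1}_M$ (no continuity argument needed). With range in $M'$ established, $M'$-bimodularity of $\phi$ follows from the same commutation you already use, so $\phi$ is indeed a conditional expectation onto $M'$, and uniqueness of the $tr_{N'}$-preserving expectation is elementary (it is determined by $tr_{N'}(\phi(x)y)=tr_{N'}(xy)$ for $y\in M'$); invoking Takesaki here is heavier machinery than necessary but not wrong. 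The trace-transport step is fine if you restrict to self-adjoint $x$ (or use $y\mapsto Jy^*J$), since $y\mapsto JyJ$ is antilinear.
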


\begin{proposition}
 Let $N\subset P,Q\subset M$ be intermediate subfactors such that $[M:N]$ is finite, not necessarily irreducible. Then the  self-adjoint operator $p$ belongs to  $P^{\prime}\cap Q_1$ and is given by $p=[P:N] E^{N^{\prime}}_{P^{\prime}}(e_Q)= [Q:N] E^{M_1}_{Q_1}(e_P).$ Similarly, $q= [Q:N]E^{N^{\prime}}_{Q^{\prime}}(e_P)= [P:N] E^{M_1}_{P_1}(e_Q)$ also. Thus $q\in Q^{\prime}\cap Q_1$.
\end{proposition}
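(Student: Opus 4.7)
The plan is to establish the two representations of $p$ separately and then obtain the corresponding representations of $q$ by conjugating with the modular conjugation $J$ of $L^2(M)$, using the identity $JpJ=q$ from Proposition \ref{J}.

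For the first identity $p=[P:N]\,E^{N'}_{P'}(e_Q)$, I would apply Lemma \ref{ocneanu} to the subfactor inclusion $N\subset P$ with the basis $\{\lambda_i\}$: the trace-preserving conditional expectation from $N'\cap\mathcal{B}(L^2(M))$ onto $P'\cap\mathcal{B}(L^2(M))$ is the map $x\mapsto[P:N]^{-1}\sum_i\lambda_i x\lambda_i^*$. Since $N\subset Q$, the Jones projection $e_Q$ commutes with $N$ and hence lies in $N'$. Substituting $x=e_Q$ yields the first identity and, as a byproduct, shows $p\in P'$. The membership $p\in Q_1=\langle M,e_Q\rangle$ is immediate from the defining expression $p=\sum_i\lambda_i e_Q\lambda_i^*$, since $\lambda_i\in P\subset M\subset Q_1$ and $e_Q\in Q_1$.

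For the dual identity $p=[Q:N]\,E^{M_1}_{Q_1}(e_P)$, I would use the modular conjugation $J$ on $L^2(M)$. The anti-automorphism $\mathrm{Ad}(J)$ of $\mathcal{B}(L^2(M))$ is trace preserving, and via the standard identity $JA'J=(JAJ)'=\langle M,e_A\rangle$ applied with $A=N,P,Q$, it carries $N',P',Q'$ onto $M_1,P_1,Q_1$ respectively. Moreover, $J$ fixes each of $e_P$ and $e_Q$, since the subspaces $L^2(P)$ and $L^2(Q)$ of $L^2(M)$ are $J$-invariant ($P,Q$ being $*$-closed). Hence $\mathrm{Ad}(J)$ intertwines the relevant conditional expectations, giving $J\,E^{N'}_{P'}(e_Q)\,J=E^{M_1}_{P_1}(e_Q)$ and, with the roles of $P,Q$ swapped, $J\,E^{N'}_{Q'}(e_P)\,J=E^{M_1}_{Q_1}(e_P)$. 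Combining with Proposition \ref{J}, the first identity already obtained yields $q=JpJ=[P:N]\,E^{M_1}_{P_1}(e_Q)$; running the first step with the roles of $P,Q$ and of the bases $\{\lambda_i\},\{\mu_j\}$ interchanged first produces $q=[Q:N]\,E^{N'}_{Q'}(e_P)$, and conjugating this by $J$ finally delivers the sought $p=[Q:N]\,E^{M_1}_{Q_1}(e_P)$.

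The one mildly delicate point is justifying the use of Lemma \ref{ocneanu} here: the lemma is stated for the inclusion $N\subset M$ and its commutant taken in $\mathcal{B}(L^2(M))$, whereas we apply it to the intermediate inclusion $N\subset P$ while still working inside $\mathcal{B}(L^2(M))$ rather than $\mathcal{B}(L^2(P))$. This is routine: using the reproducing identity $a=\sum_i\lambda_i E^P_N(\lambda_i^*a)$ for $a\in P$, one verifies directly that the map $x\mapsto[P:N]^{-1}\sum_i\lambda_i x\lambda_i^*$ sends $N'\cap\mathcal{B}(L^2(M))$ into $P'\cap\mathcal{B}(L^2(M))$, acts as the identity on $P'$, is a $P'$-bimodule map, and preserves the ambient trace. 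By uniqueness of the conditional expectation it must coincide with $E^{N'}_{P'}$. Once this is in place, the rest of the argument is the formal unwinding outlined above.
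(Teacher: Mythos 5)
Your proposal is correct and follows essentially the same route as the paper's own proof: apply Lemma \ref{ocneanu} to the inclusions $N\subset P$ and $N\subset Q$ (with commutants in $\mathcal{B}(L^2(M))$) to get $p=[P:N]E^{N'}_{P'}(e_Q)$ and $q=[Q:N]E^{N'}_{Q'}(e_P)$, then conjugate by the modular conjugation $J$ and invoke Proposition \ref{J} to obtain the dual formulas involving $E^{M_1}_{Q_1}$ and $E^{M_1}_{P_1}$. Your extra remarks -- justifying the use of Lemma \ref{ocneanu} for the intermediate inclusion inside $\mathcal{B}(L^2(M))$ and the intertwining $JE^{N'}_{P'}(\cdot)J=E^{M_1}_{P_1}(J\cdot J)$ -- only make explicit points the paper leaves implicit.
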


\begin{proof} 
Consider again $N\subset P,Q\subset M$ and let $\{\lambda_i\}$(resp. $\{\mu_j\}$) be a basis for $P/N$ (resp. $Q/N$)
 By Lemma \ref{ocneanu} we immediately get, for any $x\in N^{\prime}$, 
 $$E^{N^{\prime}}_{P^{\prime}}(x)={[P:N]}^{-1}\sum_i {\lambda}_ix{\lambda_i}^*.$$
 Clearly $e_Q\in N^{\prime}$ and hence
 $E^{N^{\prime}}_{P^{\prime}}(e_Q)={[P:N]}^{-1}\sum_i {\lambda}_ie_Q{\lambda_i}^*$. Thus, $p= \sum_i {\lambda}_ie_Q{\lambda_i}^*= [P:N] E^{N^{\prime}}_{P^{\prime}}(e_Q).$ Similarly, we can prove
 $q=[Q:N] E^{N^{\prime}}_{Q^{\prime}}(e_P)$. Now take the modular conjugation operator $J$ on $L^2(M)$ to get $JqJ= [Q:N]  E^{M_1}_{Q_1}(e_P).$
 Now, by Proposition \ref{J} we immediately get $p= [Q:N]  E^{M_1}_{Q_1}(e_P).$ Proof for $q$ is similar. This completes the proof of the proposition.
\end{proof}

\begin{proposition}\label{central}
 Let $\alpha= \pi/2$ and $p,q$ be as in Theorem \ref{q}. Then,
 $$\bigvee\{ve_Qv^*: v\in \mathcal{U}(P)\}=p$$ and
 $$ \bigvee\{ue_Pu^*: u\in \mathcal{U}(Q)\}=q$$
\end{proposition}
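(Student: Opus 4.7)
The plan is to identify both $p$ and the join $r := \bigvee\{ve_Q v^* : v\in\mathcal{U}(P)\}$ as the orthogonal projection from $L^2(M)$ onto the same closed subspace, namely $\mathcal{H} := \overline{\operatorname{span}}\{x\eta : x\in P,\ \eta\in L^2(Q)\}$, and then argue symmetrically for $q$.

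First I would compute the range of $r$ directly. Since $v\in\mathcal{U}(P)$ is unitary and $e_Q$ projects onto $L^2(Q)$, the range of $ve_Q v^*$ equals $vL^2(Q)$; taking the closed span over all unitaries $v\in\mathcal{U}(P)$ and using that $P$ is the norm-closed linear span of its unitaries gives $rL^2(M) = \mathcal{H}$.

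To obtain $r\leq p$, I would invoke Proposition \ref{q}(3): the hypothesis $\alpha(P,Q) = \pi/2$ forces $p$ to be a projection with $p\geq e_Q$. The proposition immediately preceding the present one places $p$ in $P'$, so every $v\in\mathcal{U}(P)$ commutes with $p$. Therefore $ve_Q v^* \leq vpv^* = p$, and taking the supremum yields $r\leq p$.

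For the converse $p\leq r$, I would use the basis-independent formula $p = \sum_i \lambda_i e_Q \lambda_i^*$ (which one gets from Definition \ref{p and q} after substituting $e_Q = \sum_j \mu_j e_1 \mu_j^*$, justified by Lemma \ref{independent}). For any $\xi\in L^2(M)$,
\[p\xi = \sum_i \lambda_i\bigl(e_Q \lambda_i^*\xi\bigr)\]
is a finite sum of vectors of the form $\lambda_i \eta$ with $\eta\in L^2(Q)$ and $\lambda_i\in P$; each such vector lies in $\mathcal{H} = rL^2(M)$, so $pL^2(M)\subseteq rL^2(M)$ and $p\leq r$. The corresponding identity for $q$ is obtained by interchanging the roles of $P$ and $Q$ throughout. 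I do not anticipate a serious obstacle: once the range of the join is identified as $\mathcal{H}$, both inequalities follow immediately from the results already assembled in Proposition \ref{q} and the operator-algebraic description of $p$ and $q$ established just above.
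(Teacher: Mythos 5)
Your proof is correct and takes essentially the same route as the paper's: the inequality $\bigvee\{v e_Q v^* : v\in\mathcal{U}(P)\}\leq p$ comes from $p$ being a projection dominating $e_Q$ that is fixed under conjugation by $\mathcal{U}(P)$ (you cite $p\in P'$ from the preceding proposition, while the paper derives the same invariance from basis-independence applied to $\{v^*\lambda_i\}$), and the reverse inequality comes from observing that the range of $p=\sum_i\lambda_i e_Q\lambda_i^*$ lies in the closed span of $P\cdot L^2(Q)$, which is exactly the range of the join — precisely the paper's argument.
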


\begin{proof}
 First note that as observed in Proposition \ref{q} for any  basis $\{\mu_j\}$ of $Q/N$, $q= \sum_j \mu_j e_P {\mu_j}^*$ is a projection such that $q\geq e_P$.
 Consider an arbitrary unitary element $u\in \mathcal{U}(Q)$. Then it is trivial to see that $\{u^*\mu_j\}$ is a basis for $Q/N$.
 Thus $u^*qu \geq e_P$ and hence $ue_Pu^*\leq q$. Therefore, $\bigvee \{ue_Pu^*:u\in \mathcal{U}(Q)\}\leq q$.
  Observe that, since $q=\sum_j \mu_j e_P {\mu_j}^*$,
  \begin{eqnarray*}
   range(q)& \subset &[\mu L^2(P):\mu \in Q]\\
   & = & [u L^2(P): u\in \mathcal{U}(Q)]\\
   & = &[range(\{ue_Pu^*: u\in \mathcal{U}(Q)\})].
   \end{eqnarray*}
 Thus, $\bigvee\{ue_Pu^*:u\in \mathcal{U}(Q)\}\geq q$.
So, $\bigvee \{ue_P u^*:u\in \mathcal{U}(Q)\}= q$. Proof for $p$ is exactly similar.
\end{proof}

\begin{remark}
Let $\alpha= \pi/2$ and $p,q$ be as in Theorem \ref{q}.
Then it is not hard to show that $p,q \geq e_P \vee e_Q.$ In general, it is not true that $e_P \vee e_Q= e_{P\vee Q}$,
 although $e_P\vee e_Q\leq p,q\leq e_{P\vee Q}.$
\end{remark}

Below we give a characterization of commuting square in terms of basis:
\begin{theorem}\label{cs}
For a quadruple $(N, P,Q,M)$ the following are equivalent:
\begin{enumerate}

\item
$(N,P,Q,M)$ is a commuting square, that is $\alpha(P,Q) = \pi/2$.

\item $$p= \bigvee \{v e_Qv^*: v\in \mathcal{U}(P)\}.$$

\item  $$q=\bigvee \{u e_Pu^*: u\in \mathcal{U}(Q)\}.$$
\end{enumerate}
\end{theorem}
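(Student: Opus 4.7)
The plan is to combine the two previous results already in hand: Proposition \ref{q} identifies $\alpha(P,Q)=\pi/2$ with the condition that $p$ (or $q$) is a projection dominating $e_Q$ (resp.\ $e_P$), while Proposition \ref{central} gives, under the commuting square hypothesis, the explicit unitary-orbit join descriptions of $p$ and $q$. So the implications $(1) \Rightarrow (2)$ and $(1) \Rightarrow (3)$ are precisely the content of Proposition \ref{central}, leaving only the two converses to verify.

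For $(2) \Rightarrow (1)$, I would first observe that the right-hand side of $(2)$ is a join of projections in the projection lattice of $\mathcal{B}(L^2(M))$, hence is itself a projection. Therefore $(2)$ forces the a priori merely self-adjoint positive element $p$ to be a projection. Next, specializing $v = 1 \in \mathcal{U}(P)$ in the join gives $e_Q = 1 \cdot e_Q \cdot 1^* \leq p$. Proposition \ref{q} then immediately yields $\alpha(P,Q) = \pi/2$. The argument for $(3) \Rightarrow (1)$ is entirely symmetric: taking $u = 1 \in \mathcal{U}(Q)$ produces a projection $q$ with $q \geq e_P$, and Proposition \ref{q} again closes the loop.

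There is no real obstacle here; the theorem is in essence a packaging of Propositions \ref{q} and \ref{central}. The only subtle point, hardly an obstacle, is to note explicitly that equating the positive operator $p$ (resp.\ $q$) with a lattice supremum of projections tacitly asserts that $p$ (resp.\ $q$) is a projection, and this is exactly the hypothesis one needs in order to invoke Proposition \ref{q}.
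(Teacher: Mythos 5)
Your proposal is correct and follows essentially the same route as the paper: $(1)\Rightarrow(2),(3)$ is exactly Proposition \ref{central}, and the converses take $v=1$ (resp.\ $u=1$) in the join to get $p\geq e_Q$ (resp.\ $q\geq e_P$) and then invoke Proposition \ref{q}. Your extra remark that equating $p$ with a supremum of projections forces $p$ to be a projection is a point the paper leaves implicit, and it is indeed needed to apply Proposition \ref{q}, so your write-up is if anything slightly more careful.
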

\begin{proof}
 $(1) \Rightarrow (2)$ 

This is Proposition \ref{central}.

\bigskip

$(2) \Rightarrow (1)$

Clearly $\bigvee \{v e_Qv^*: v\in \mathcal{U}(P)\} \geq e_Q$. Hence $p\geq e_Q$. Again applying Proposition \ref{q}
we get $\alpha(P,Q)= \pi/2$.

Thus (1) and (2) are equivalent.

By symmetry,  (1) and (3) are equivalent.
This completes the proof.
\end{proof}

Below we investigate when $\alpha(P,Q)= \pi/2= \beta(P,Q)$. Explicitly we characterize  simultaneously commuting and co-commuting squares
in terms of various equivalent conditions. 

\begin{theorem}\label{sym}
 For a quadruple $(N,P,Q,M)$, the following are equivalent:
 \begin{enumerate}
  \item $(N,P,Q,M)$ is a commuting and co-commuting square; 
  \item  $\alpha(P,Q)= \beta(P,Q)= \pi/2$;
  \item $p=1$;
  \item  If $\{\lambda_i\}$(resp. $\{\mu_j\})$ is a basis for $P/N$ (resp. $Q/N)$, then $ \{\lambda_i \mu_j\}$ is a basis for $M/N$;
  \item $q=1$;
  \item  If $\{\lambda_i\}$(resp. $\{\mu_j\})$ is a basis for $P/N$ (resp. $Q/N)$,then  $ \{\mu_j \lambda_i\}$ is a basis for $M/N$;
  
  \item  Any basis(not necessarily orthonormal) for $P/N$ is a basis for $M/Q$;
  \item  Any basis (not necessarily orthonormal) for $Q/N$ is a basis for $M/P$.
         
 \end{enumerate}
 \end{theorem}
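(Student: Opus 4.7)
The plan is to prove the chain of equivalences by first identifying $p=1$ (equivalently $q=1$) as the pivotal central statement, and then showing all other conditions reduce to this one.

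First I would establish $(1)\Leftrightarrow(2)$ as immediate consequences of Propositions \ref{commuting} and \ref{co-commuting}. The heart of the proof is then the equivalence $(2)\Leftrightarrow(3)$. For $(2)\Rightarrow(3)$, assuming $\alpha(P,Q)=\beta(P,Q)=\pi/2$, by Corollary \ref{Cor:123} both $tr(e_Pe_Q)=\tau$ and $tr(e_Pe_Q)=\tau_P\tau_Q$ must hold, forcing $\tau=\tau_P\tau_Q$ so the quadruple is a parallelogram. By Proposition \ref{q}, $p$ is a projection with $p\geq e_Q$, and by the proposition preceding Theorem \ref{sym}, $p=[Q:N]E^{M_1}_{Q_1}(e_P)$. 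Taking trace yields $tr(p)=[Q:N]\,\tau_P = \tau_P\tau_Q/\tau = 1$, and since $p$ is a projection with full trace, $p=1$. Conversely, if $p=1$, then trivially $p$ is a projection with $p\geq e_Q$ so Proposition \ref{q} delivers $\alpha(P,Q)=\pi/2$ (hence $tr(e_Pe_Q)=\tau$), while $tr(p)=1$ reads $\tau_P\tau_Q=\tau$; combining these gives $tr(e_Pe_Q)=\tau_P\tau_Q$, whence $\beta(P,Q)=\pi/2$ by Corollary \ref{Cor:123}.

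The remaining equivalences then fall out routinely. Lemma \ref{Lem:p=1} directly gives $(3)\Leftrightarrow(4)$ and $(5)\Leftrightarrow(6)$ (invoking Lemma \ref{independent} to deal with basis-independence). Proposition \ref{J} shows $JpJ=q$ and $J$ is an anti-unitary fixing $1$, so $(3)\Leftrightarrow(5)$. For $(3)\Leftrightarrow(7)$ and $(5)\Leftrightarrow(8)$, I would invoke the identity $p=\sum_i\lambda_ie_Q\lambda_i^*$ and its companion $q=\sum_j\mu_je_P\mu_j^*$ established in the proof of Lemma \ref{independent}: since $\{\lambda_i\}$ being a basis for $M/Q$ is by definition the statement $\sum_i\lambda_ie_Q\lambda_i^*=1$, these conditions coincide exactly with $p=1$ respectively $q=1$, and basis-independence of $p,q$ makes the universal quantifier ``any basis'' consistent.

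The main obstacle, as indicated above, is the implication $(2)\Rightarrow(3)$: both commuting and co-commuting must be used simultaneously to force the trace of $p$ to equal one. The co-commuting hypothesis enters precisely through the alternative formula $tr(e_Pe_Q)=\tau_P\tau_Q$ provided by $\beta=\pi/2$, which is what promotes the projection $p$ (guaranteed by $\alpha=\pi/2$ alone) from being merely $\geq e_Q$ to being the identity. Once this is in hand, the other equivalences are essentially unpacking definitions combined with the two technical lemmas on $p$ and $q$ already proved.
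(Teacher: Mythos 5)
Your proposal is correct and follows essentially the same route as the paper: it pivots on $p=1$ (and $q=1$), using Proposition \ref{q} together with the trace formula for $p$ and the parallelogram condition to get $(1)/(2)\Leftrightarrow(3)$, and Lemma \ref{Lem:p=1} plus basis-independence for the remaining items. Your minor variations --- invoking Corollary \ref{Cor:123} in place of Corollary \ref{geq} and Theorem \ref{v}, using $JpJ=q$ instead of the $P\leftrightarrow Q$ symmetry, and identifying (7) and (8) directly with $p=1$ and $q=1$ rather than citing Lemma 4.3.4 of \cite{JS} --- are all sound and do not change the substance of the argument.
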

 
 \begin{proof}
  By Proposition \ref{commuting} and by definition of co-commuting square $(1) \iff (2)$.
  
 Suppose (1) holds true. Then, applying Corollary \ref{geq} twice we get $[M:Q]=[P:N]$. Thus by Equation (\ref{trq}) $tr(q)=1$. 
 Since $\alpha(P,Q)=\pi/2$ from Proposition \ref{q} it follows that $q$ is  a projection implying $q=1$. Similarly $p=1$.
 Thus $(1)\Rightarrow (3),(5).$

By Lemma \ref{Lem:p=1}, $(3)\iff (4)$ and $(5)\iff (6)$.

Suppose $(3)$ holds true, that is  $p=1$ . Thus applying Proposition \ref{q} we immediately get $\alpha(P,Q)=\pi/2$. 
Using again Equation(\ref{trq}) we obtain $[M:Q]=[P:N]$.
 Then Theorem \ref{v} implies $\beta(P,Q) = \pi/2$. In other words, $(3)\Rightarrow (1)$.

 Suppose (4) holds true. Let $\{\lambda_i\}$ be any basis for $P/N$. Fix a basis $\{\mu_j\}$ for $Q/N$.
 Thus, $(4)$ implies $\{\lambda_i\mu_j\}$ is a basis for $M/N$.
 Hence, $\sum_{i,j} \lambda_i \mu_j e_1 {\mu_j}^*{\lambda_i}^*=1$. Thus, $\sum_i \lambda_i e_Q {\lambda_i}^*=1$ (since we know $\sum_j \mu_j e_1{\mu_j}^*=e_Q$). We obtain $\{\lambda_i\}$ is a basis for $M/Q$.
 Therefore, we obtain $(4) \Rightarrow (7).$

 Simply use Lemma 4.3.4 (i) of \cite{JS} to conclude that $(7) \Rightarrow (4)$.

 Therefore we obtain, Therefore $(1) \iff (2) \iff (3) \iff (4)\iff (7)$.
 
 By symmetry (that is $\beta(P,Q)= \beta(Q,P)) (1) \iff (2) \iff (5) \iff (6)\iff (8)$.
 
 This completes the proof of equivalent statements.
 \end{proof}

Now, the following corollary follows easily. This is the characterization of non-degenerate commuting square due to S. Popa (see \cite{Po2})(with slight modification):

\begin{corollary}\cite{Po2}\label{po2}
For a commuting square $(N,P,Q,M)$ of $II_1$-factors with all inclusions of finite index, the following statements are equivalent:
 \begin{enumerate}
 
 \item
  $(N,P,Q,M)$ is a co-commuting square, that is $\beta^N_M(P,Q)= \pi/2.$

   \item $\bigvee\{ve_Qv^*: v\in \mathcal{U}(P)\}=1$

 \item  $\bigvee\{ue_Pu^*: u\in \mathcal{U}(Q)\}=1$

 \item  Any basis(not necessarily orthonormal) for $P/N$ is a basis for $M/Q$.

 \item Any  basis(not necessarily orthonormal) for $Q/N$ is a basis for $M/P$.

 \item  $PQ:= span\{\sum_{i=1}^n x_iy_i:x_i\in P, y_i\in Q\} = M$, in particular $(N,P,Q,M)$ is non-degenerate.

 \item  $QP= M$, in particular $(N,Q,P,M)$ is non-degenerate
 \end{enumerate}
\end{corollary}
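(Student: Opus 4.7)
The plan is to reduce each equivalence to the master list in Theorem \ref{sym}, exploiting the standing hypothesis that $(N,P,Q,M)$ is already a commuting square. By Proposition \ref{commuting}, that hypothesis is $\alpha^N_M(P,Q)=\pi/2$, so every clause of Theorem \ref{sym} is available as a characterization of the co-commuting condition $\beta^N_M(P,Q)=\pi/2$, which is precisely condition (1) of the corollary.

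First I would observe that (1) $\Leftrightarrow$ (4) $\Leftrightarrow$ (5) are literally conditions (2), (7), (8) of Theorem \ref{sym}, so no further work is required there. Next, for (2) and (3), Theorem \ref{cs} furnishes the identities $p=\bigvee\{ve_Qv^*:v\in\mathcal{U}(P)\}$ and $q=\bigvee\{ue_Pu^*:u\in\mathcal{U}(Q)\}$ under the commuting-square assumption. Hence (2) is the statement $p=1$, which is clause (3) of Theorem \ref{sym}, and (3) is the statement $q=1$, which is clause (5) of Theorem \ref{sym}; both are thereby equivalent to (1).

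The only remaining content concerns (6) and (7). Fix a basis $\{\lambda_i\}$ for $P/N$ and $\{\mu_j\}$ for $Q/N$. For (4) $\Rightarrow$ (6): by (4), $\{\lambda_i\}$ is also a basis for $M/Q$, so every $x\in M$ decomposes as $x=\sum_i\lambda_i q_i$ with $q_i\in Q$, giving $M\subseteq PQ$; the reverse inclusion is trivial. For the converse, I would use the explicit formula $p=\sum_{i,j}\lambda_i\mu_j e_1\mu_j^*\lambda_i^*$ from Definition \ref{p and q}: the range of the projection $p$ (it is a projection, since $\alpha=\pi/2$) is contained in the closed span of $PQ\cdot\Omega\subset L^2(M)$, so the span identity $PQ=M$ forces that range to be dense and hence equal to $L^2(M)$, giving $p=1$. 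This is clause (3) of Theorem \ref{sym}, and hence equivalent to (1). The pair (7) $\Leftrightarrow$ (1) is handled symmetrically via the analogous formula $q=\sum_{i,j}\mu_j\lambda_i e_1\lambda_i^*\mu_j^*$ and the span $QP$.

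The main potential obstacle is precisely this last step: passing from the algebraic span identity $PQ=M$ to the operator identity $p=1$. One must verify under the commuting-square assumption that the range of $p$ equals the closed span of $PQ\cdot\Omega$, not a proper subspace. This is where finiteness of the index and the explicit biprojection expressions for $e_P$ and $e_Q$ are essential, and it is also where the basis-to-span dictionary provided by Lemma 4.3.4 of \cite{JS} (already invoked in the proof of Theorem \ref{sym}) comes into play.
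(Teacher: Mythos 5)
Your reduction of (1), (2), (3), (4), (5) to Theorem \ref{sym} via Theorem \ref{cs} (equivalently Proposition \ref{central}) is exactly the paper's route, and your argument for (4) $\Rightarrow$ (6) (a basis for $M/Q$ inside $P$ gives $M\subseteq PQ$) is a correct, mild variant of the paper's passage from $q=1$ to $M=QP$. The gap is in the converse (6) $\Rightarrow$ $p=1$ (and symmetrically (7) $\Rightarrow$ $q=1$), which is the only step with real content. Your proposed mechanism is a non-sequitur: the containment $\mathrm{range}(p)\subseteq \overline{PQ\,\Omega}$ does hold, but combining it with $PQ=M$ only yields $\mathrm{range}(p)\subseteq L^2(M)$, which is vacuous. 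A projection whose range sits inside a dense subspace need not be $1$; to conclude $p=1$ you need the \emph{opposite} inclusion, namely that $p$ acts as the identity on $PQ\,\Omega$. You flag this as the "main potential obstacle" but do not supply an argument, and the tools you point to (the biprojection relations, Lemma 4.3.4 of \cite{JS}) do not by themselves close it.

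The missing computation is where the standing commuting-square hypothesis actually gets used. For (7) $\Rightarrow$ $q=1$, write $x=\sum_k b_k a_k$ with $b_k\in Q$, $a_k\in P$; then
\begin{align*}
q(x\Omega)&=\sum_{j,k}\mu_j e_P(\mu_j^* b_k a_k\Omega)
=\sum_{j,k}\mu_j E^M_P(\mu_j^* b_k)a_k\Omega
=\sum_{j,k}\mu_j E^M_P E^M_Q(\mu_j^* b_k)a_k\Omega\\
&=\sum_{j,k}\mu_j E^M_N(\mu_j^* b_k)a_k\Omega
=\sum_k b_k a_k\Omega=x\Omega,
\end{align*}
using $E^M_PE^M_Q=E^M_N$ (the commuting-square hypothesis) and the right-basis property of $\{\mu_j\}$ for $Q/N$; since $QP\Omega$ is dense and $q$ is bounded, $q=1$, which is clause (5) of Theorem \ref{sym}, hence (1). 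The case (6) $\Rightarrow$ $p=1$ is the same computation with the roles of $P$ and $Q$ exchanged. Without this step your chain of equivalences does not close, so as written the proposal is incomplete precisely at the point that distinguishes this corollary from Theorem \ref{sym}.
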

\begin{proof}
Suppose $\{\lambda_i\}, \{\mu_j\},p$ and $q$ be as before.

By Theorem \ref{sym} and Proposition \ref{central} it is trivial to see that conditions (1), (2) and (4) all are 
equivalent to satisfy the equation  $p=1$. Similarly, (1),(3) and (5) are equivalent to the equation $q=1$.

Suppose $(3)$ holds true. Thus by Theorem \ref{cs} $\{\mu_j \lambda_i\}$ is a basis for $M/N$ and hence $M=QP$ implying (7). Conversely, suppose $(7)$ holds true. Thus  any $x\in M$ can be written as $x= \sum_k b_k a_k$, where $b_k\in Q$ and $a_k\in P$.
Then  it is easy to check that for any $x\in M$:
\begin{align*}
 q(x\Omega) = & q((\sum_k b_k a_k)\Omega)\\
 & \qquad = \sum_{j,k} \mu_j e_P({\mu_j}^*b_k a_k\Omega)\\
 & \qquad= \sum_{j,k} \mu_j E^M_P({\mu_j}^*b_k)a_k\Omega\\
 & \qquad= \sum_{j,k} \mu_j E^M_P E^M_Q({\mu_j}^*b_k)a_k\Omega\\
 & \qquad= \sum_{j,k} \mu_j E^Q_N({\mu_j}^*b_k)a_k\Omega~ ~[\textrm{by commuting square condition}]\\
 & \qquad= \sum_{k} b_k a_k \Omega~~ [\textrm{since}~\{\mu_j\} ~\textrm{is a basis for}~ Q/N]\\
 &\qquad= x\Omega.
\end{align*}
Thus $q=1$.  

That $(6)$ is equivalent to $ p=1$ is exactly similar.

This completes the proof.
\end{proof}

\begin{remark}
 It is worth mentioning that Popa has shown  that if (4) of Theorem \ref{sym} holds for a quadruple $(N,P,Q,M)$, then $\overline{sp PQ}=M$ with the additional 
 assumption that the quadruple is a commuting square; whereas we have shown in Theorem \ref{sym} that if (4) holds, then automatically the quadruple will be a non-degenerate commuting square.
\end{remark}

\begin{corollary}
 Let $(N,P,Q,M)$ be a quadruple. If for some basis $\{\mu_j\}$  for $Q/N$ it happens that  $\{{\mu_j}^*\}$
 is  a basis for $M/P$, then $\alpha^N_M(P,Q)= \beta^N_M(P,Q)= \pi/2$. Similar statement holds for $\{\lambda_i\}$.
\end{corollary}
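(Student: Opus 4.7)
The plan is to verify condition $(5)$ of Theorem \ref{sym}, namely $q=1$, which forces the quadruple $(N,P,Q,M)$ to be simultaneously commuting and co-commuting and therefore gives $\alpha^N_M(P,Q)=\beta^N_M(P,Q)=\pi/2$. By Lemma \ref{independent}, $q=\sum_j \mu_j e_P \mu_j^*$, so everything can be phrased in terms of the given basis $\{\mu_j\}$ for $Q/N$; no basis for $P/N$ needs to be named.

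First I would show $\alpha^N_M(P,Q)=\pi/2$ directly from the trace formula. Substituting $e_Q=\sum_j \mu_j e_1 \mu_j^*$ and using cyclicity of the trace,
\begin{equation*}
tr(e_P e_Q)=\sum_j tr(\mu_j^* e_P \mu_j e_1)=tr\!\Big(\Big(\textstyle\sum_j \mu_j^* e_P \mu_j\Big) e_1\Big)=tr(e_1)=\tau,
\end{equation*}
where the key equality is the hypothesis $\sum_j \mu_j^* e_P \mu_j=1$. Plugging into the formula in Theorem \ref{Thm: alpha-beta} gives $\cos\alpha^N_M(P,Q)=0$, hence $\alpha^N_M(P,Q)=\pi/2$. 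Proposition \ref{q} then guarantees that $q$ is a projection.

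Next I would compute $tr(q)$. Using the Markov property $tr(xe_P)=\tau_P\,tr(x)$ for $x\in M$ (applied to $x=\mu_j^*\mu_j\in Q\subset M$) and cyclicity,
\begin{equation*}
tr(q)=\sum_j tr(\mu_j^*\mu_j\, e_P)=\tau_P\sum_j tr(\mu_j^*\mu_j).
\end{equation*}
Taking the trace of the hypothesis $\sum_j \mu_j^* e_P \mu_j = 1$ yields $\tau_P\sum_j tr(\mu_j\mu_j^*)=1$, and since $tr(\mu_j^*\mu_j)=tr(\mu_j\mu_j^*)$ we obtain $tr(q)=1$. As $q$ is a projection in the finite factor $M_1$ with full trace, faithfulness of $tr$ forces $q=1$. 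Applying Theorem \ref{sym} delivers $\alpha^N_M(P,Q)=\beta^N_M(P,Q)=\pi/2$. The analogous statement for $\{\lambda_i\}$ follows by interchanging the roles of $P$ and $Q$ (and of $\{\lambda_i\}$ with $\{\mu_j\}$) in the argument above.

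The only nontrivial step is the cyclic rearrangement in Step 1, which is precisely where the left-right ordering of the hypothesis $\sum_j \mu_j^* e_P \mu_j=1$ (with $\mu_j^*$ on the left) enters; once this pairing is spotted, the remaining work is routine bookkeeping with the Markov trace of $e_P$ and the faithfulness of $tr$.
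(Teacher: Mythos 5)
Your proof is correct, and its first half is exactly the paper's: the hypothesis says $r=\sum_j\mu_j^*e_P\mu_j=1$, and the identity $tr(e_Pe_Q)=tr(re_1)=\tau$ fed into the formula of Theorem \ref{Thm: alpha-beta} (this is Equation \eqref{c} in the paper) gives $\alpha^N_M(P,Q)=\pi/2$. Where you diverge is the dual angle. The paper is more direct: from the basis properties $\sum_j\mu_j\mu_j^*=[Q:N]$ and $\sum_j\mu_j^*\mu_j=[M:P]$, equality of the traces $tr(\mu_j\mu_j^*)=tr(\mu_j^*\mu_j)$ forces $[M:P]=[Q:N]$, so the quadruple is a parallelogram and Theorem \ref{v} immediately yields $\beta^N_M(P,Q)=\pi/2$. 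You instead compute $tr(q)=1$ via the Markov property, use Proposition \ref{q} (now that $\alpha=\pi/2$) to see that $q$ is a projection, conclude $q=1$ by faithfulness of the trace, and invoke Theorem \ref{sym}(5). This is a valid repackaging, and it even gives slightly more information (namely that $(N,P,Q,M)$ is a non-degenerate commuting and co-commuting square and that products of bases give a basis for $M/N$), but it is not an independent shortcut: your trace computation of $q$ reproduces Equation \eqref{trq}, and the implication $q=1\Rightarrow\beta=\pi/2$ inside Theorem \ref{sym} is itself proved in the paper via Equation \eqref{trq} and Theorem \ref{v}, i.e. the very parallelogram argument the paper's proof uses directly. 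Note also that both routes share the same implicit extremality assumption required by Theorem \ref{v} (hence by Theorem \ref{sym}), so this is not a gap peculiar to your version.
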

\begin{proof}
 Put as before $r= \sum_j{\mu_j}^* e_P \mu_j$. By assumption $r=1$. Thus by Equation (\ref{c}) $\alpha(P,Q)= \pi/2$. By property of 
 basis $\sum_j \mu_j {\mu_j}^*= [Q:N]$ and $\sum_j {\mu_j}^* \mu_j= [M:P]$. Thus $[M:P]=[Q:N]$ and hence the quadruple is a parallelogram and therefore by Theorem \ref{v} $\beta(P,Q)= \pi/2$.
\end{proof}

\begin{corollary}\label{twosided}
 If $P/N$ and $Q/N$ both have two sided basis, then $\alpha(P,Q)= \beta(P,Q)= \pi/2$ implies that $M/N$ has two sided basis.
\end{corollary}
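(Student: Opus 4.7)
The plan is to exhibit an explicit two-sided basis for $M/N$ constructed from the given two-sided bases for $P/N$ and $Q/N$ by invoking Theorem \ref{sym} twice. Let $\{\lambda_i\}$ be a two-sided basis for $P/N$ and $\{\mu_j\}$ a two-sided basis for $Q/N$. By the definition recalled before Proposition \ref{basis}, this means that both $\{\lambda_i\}$ and $\{\lambda_i^*\}$ are (right) bases for $P/N$, and similarly both $\{\mu_j\}$ and $\{\mu_j^*\}$ are (right) bases for $Q/N$.

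First, I would apply the implication $(2)\Leftrightarrow (4)$ of Theorem \ref{sym} to the bases $\{\lambda_i\}$ for $P/N$ and $\{\mu_j\}$ for $Q/N$: since $\alpha(P,Q)=\beta(P,Q)=\pi/2$, the family $\{\lambda_i\mu_j\}$ is a right basis for $M/N$. The natural candidate for a two-sided basis of $M/N$ is therefore $\{\lambda_i\mu_j\}$, and it remains only to check that it is also a left basis, i.e.\ that $\{(\lambda_i\mu_j)^*\}=\{\mu_j^*\lambda_i^*\}$ is a right basis for $M/N$.

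To verify this, I would apply the implication $(2)\Leftrightarrow (6)$ of Theorem \ref{sym}, but this time with the bases $\{\lambda_i^*\}$ for $P/N$ and $\{\mu_j^*\}$ for $Q/N$, which are legitimate right bases precisely because the original bases were assumed two-sided. The conclusion of Theorem \ref{sym}(6) is that $\{\mu_j^*\lambda_i^*\}$ is a right basis for $M/N$. Therefore $\{\lambda_i\mu_j\}$ is simultaneously a right basis (first application) and a left basis (second application), hence a two-sided basis for $M/N$.

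There is no real obstacle here: the content is packaged inside Theorem \ref{sym}, and the only ingredient needed is the symmetric use of the equivalence $(2)\Leftrightarrow (4)$ and $(2)\Leftrightarrow (6)$, together with the elementary fact that adjoining reverses the order of a product and exchanges the roles of left and right bases. The proof is essentially a one-line invocation of Theorem \ref{sym} applied twice, once to the given bases and once to their adjoints.
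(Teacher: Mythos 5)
Your proof is correct and takes essentially the same route as the paper: the paper's own argument is a one-line appeal to the equivalences of Theorem \ref{sym}, and you simply spell out the intended details, namely applying $(2)\Rightarrow(4)$ to the given bases to get that $\{\lambda_i\mu_j\}$ is a right basis and $(2)\Rightarrow(6)$ to the adjoint bases $\{\lambda_i^*\},\{\mu_j^*\}$ (legitimate right bases by two-sidedness, and allowed since the conditions are basis-independent) to get that $\{\mu_j^*\lambda_i^*\}$ is a right basis, hence $\{\lambda_i\mu_j\}$ is two-sided.
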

\begin{proof}
 Just use the fact $(2)\Leftrightarrow (3)$ of Theorem \ref{sym}.
\end{proof}
\begin{corollary}
Consider  the intermediate subfactor $P$ such that $N\subset P \subset M$.
 Suppose, $\{\lambda_i\}$(respectively, $\{\gamma_j\}$) is a two-sided basis for $P/N$(resp. for $M/P$).
 If there exists another intermediate subfactor $Q$ such that $\alpha(P,Q)= \beta(P,Q)= \pi/2$, then
 $\{\lambda_i \gamma_j\}$ is a two-sided basis for $M/N$.
\end{corollary}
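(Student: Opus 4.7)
The plan is to verify that $\{\lambda_i\gamma_j\}$ satisfies both the right-basis and the left-basis conditions for $M/N$, and I would treat them in opposite order of difficulty.

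The left-basis condition is immediate from the two-sided hypotheses on $\{\lambda_i\}$ and $\{\gamma_j\}$ alone, and does not invoke $Q$. Using $\sum_i \lambda_i^* e_N \lambda_i = e_P$ (left-basis property of $\{\lambda_i\}$ for $P/N$) followed by $\sum_j \gamma_j^* e_P \gamma_j = 1$ (left-basis property of $\{\gamma_j\}$ for $M/P$), I would factor
\[
\sum_{i,j}(\lambda_i\gamma_j)^* e_N (\lambda_i\gamma_j)
=\sum_j \gamma_j^* \Bigl(\sum_i \lambda_i^* e_N \lambda_i\Bigr)\gamma_j
=\sum_j \gamma_j^* e_P \gamma_j = 1.
\]

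The right-basis condition $\sum_{i,j}\lambda_i\gamma_j e_N \gamma_j^*\lambda_i^*=1$ is where the existence of $Q$ is essential. The strategy is to relate $\{\gamma_j\}$ to a basis coming from $Q$. Fix a basis $\{\mu_k\}$ for $Q/N$. By Theorem~\ref{sym}, the hypothesis $\alpha(P,Q)=\beta(P,Q)=\pi/2$ gives $p=q=1$, so part~(4) of Theorem~\ref{sym} yields $\sum_{i,k}\lambda_i \mu_k e_N \mu_k^*\lambda_i^* = p = 1$, and part~(8) tells us that $\{\mu_k\}$ is also a basis for $M/P$. Since $\{\gamma_j\}$ is a second basis for $M/P$, I would expand $\gamma_j=\sum_k \mu_k c_{jk}$ with $c_{jk}=E_P(\mu_k^*\gamma_j)\in P$, substitute into the target sum, and use the commuting-square identities $e_N=e_Pe_Q=e_Qe_P$ and $E_Q|_P=E_N$ together with the fact that the $c_{jk}\in P$ commute with $e_P$, to transfer $e_N$ across the coefficients; the biorthogonality $\sum_j c_{jk}d_{jl}=E_P(\mu_k^*\mu_l)$ (with $d_{jl}=E_P(\gamma_j^*\mu_l)$) then collapses the sum to the already-known identity $\sum_{i,k}\lambda_i\mu_k e_N \mu_k^*\lambda_i^* = 1$.

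The main obstacle is in this last reduction: unlike the left-basis calculation, the coefficients $c_{jk}\in P$ do not commute with $e_N$, so the naive factoring fails. One must use the commuting square to convert products of the form $c_{jk}e_N c_{jl}^*$ into expressions that interact cleanly with the $\mu$-basis, and only after the conjugation by the $\lambda_i$'s (which realises the trace-preserving expectation $[P:N]\,E^{N'}_{P'}$) do the cross terms cancel out to give $1$. As a backup I would verify directly that the left-hand side has trace $1$ using the Markov trace together with the identities $\sum_i \lambda_i\lambda_i^*=[P:N]$ and $\sum_j\gamma_j\gamma_j^*=[M:P]$ that follow from two-sidedness, and then conclude equality with $1$ via positivity and a projection/domination argument coming from $\sum_j \gamma_j e_P \gamma_j^*=1$.
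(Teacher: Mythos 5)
Your left-basis half is correct and essentially self-contained: $\sum_{i,j}\gamma_j^*\lambda_i^*e_N\lambda_i\gamma_j=\sum_j\gamma_j^*e_P\gamma_j=1$, with no role for $Q$ — this is just composition of bases in the compatible order. The genuine gap is in the right-basis half, which is the entire content of the corollary and the only place the hypothesis on $Q$ enters, and there your argument is not actually carried out. In your main route you expand $\gamma_j=\sum_k\mu_kc_{jk}$ with $c_{jk}=E_P(\mu_k^*\gamma_j)\in P$ and hope to collapse $S:=\sum_{i,j}\lambda_i\gamma_je_N\gamma_j^*\lambda_i^*$ onto $p=\sum_{i,k}\lambda_i\mu_ke_N\mu_k^*\lambda_i^*=1$; but, as you yourself note, the terms $\sum_jc_{jk}e_Nc_{jl}^*$ do not simplify, because the change-of-basis coefficients lie in $P$, not $N$, and $e_N$ only absorbs $N$. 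The assertion that ``after conjugation by the $\lambda_i$'s the cross terms cancel'' is exactly the statement to be proved, and no mechanism is given; note also that the identity $\sum_i\lambda_ix\lambda_i^*=[P:N]E^{N'}_{P'}(x)$ (Lemma \ref{ocneanu}) is only available for $x\in N'$, and $\sum_j\gamma_je_N\gamma_j^*$ is not obviously in $N'$, so that heuristic does not apply as stated.

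The backup does not close the gap either. The trace computation is fine: using $\sum_i\lambda_i^*\lambda_i=[P:N]$ and $\sum_jtr(\gamma_j\gamma_j^*)=[M:P]$ one gets $tr(S)=1$ (and this uses neither $Q$ nor co-commutation). But to conclude $S=1$ you need $S\leq 1$ (or that $S$ is a projection), and the domination you invoke only gives $\sum_j\gamma_je_N\gamma_j^*\leq\sum_j\gamma_je_P\gamma_j^*=1$, hence $S\leq\sum_i\lambda_i\lambda_i^*=[P:N]$, which is useless. Moreover $\sum_j\gamma_je_N\gamma_j^*$ genuinely depends on the chosen two-sided basis of $M/P$: already for $R\subset R\rtimes(\mathbb{Z}_2\times\mathbb{Z}_2)$ with $P=R\rtimes\langle a\rangle$, the bases $\{1,u_b\}$ and $\{1,u_{ab}\}$ give $e_Q$ and $e_{Q'}$ for two different intermediate subfactors, so no argument can simply identify this operator with $e_Q$ for the given $Q$ and quote $p=1$. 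The paper's proof proceeds quite differently and more briefly: it uses Corollary \ref{po2} to conclude that the given two-sided basis of $M/P$ serves as a two-sided basis for $Q/N$, and then applies Corollary \ref{twosided} (i.e.\ items (4) and (6) of Theorem \ref{sym}) to the pair of two-sided bases of $P/N$ and $Q/N$; your sketch neither reproduces this reduction nor supplies a working substitute for it.
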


\begin{proof}
 Firstly applying Theorem \ref{po2} we find that $\{\gamma_j\}$ is a two sided basis for $Q/N$. Then simply using Corollary \ref{twosided} we immediately obtain the result.
\end{proof}

\section{Boundedness  of angle}\label{Sec:rigidity}

In this section  and the next section we assume that $N\subseteq M$ is  an irreducible subfactor. 
In the irreducible case intermediate von-Neumann algebras are intermediate subfactors, so the set of intermediate subfactors form a lattice under the operations $P\wedge Q= P\cap Q$ and $P\vee Q= \{P\cup Q\}^{\dprime}$.

\begin{definition}
 Let $N\subseteq M$ be a subfactor. Then $Q$ is called a maximal (respectively, minimal) intermediate subfactor of $N\subseteq M$ if whenever there exists an intermediate subfactor $P$ such that $N\subseteq Q\subseteq P\subseteq M$ (respectively, $N\subseteq P\subseteq Q\subseteq M$), then
 $P$ equals either $Q$ or $M$(respectively, $P$ equals either $N$ or $Q$). We 
 exclude $N$ and $M$ from the definition of maximal(or minimal) intermediate subfactor for obvious reason. \par Note that, maximal intermediate subfactors in
$N\subseteq M$ correspond to minimal intermediate subfactors in $M\subseteq M_1$.
\end{definition}
\begin{lemma}\label{lem:epeq}
	Suppose $e_P$ and $e_Q$ are two biprojections, then $e_P\vee e_Q$ is a subprojection of $\frac{1}{\delta tr(e_Pe_Q)}e_P\star e_Q$.
\end{lemma}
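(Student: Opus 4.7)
The plan is to first invoke Landau's theorem (Theorem \ref{Thm:Landau}), which guarantees that $R := \frac{1}{\delta\, tr(e_P e_Q)}\, e_P \star e_Q$ is a projection in $N' \cap M_1$. Once $R$ is known to be a projection, showing $e_P \vee e_Q \leq R$ reduces to showing the two individual inequalities $e_P \leq R$ and $e_Q \leq R$, since the join is by definition the smallest projection dominating both. Because $R$ is symmetric in the roles of $e_P$ and $e_Q$, the two inequalities are proved by identical arguments, so I focus on $e_P \leq R$.

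For projections in a finite tracial von Neumann algebra, $e_P \leq R$ is equivalent to $tr(e_P R) = tr(e_P)$. Indeed, $e_P (1-R) e_P$ is a positive operator whose trace equals $tr(e_P) - tr(e_P R)$; when this trace vanishes we get $e_P (1-R) e_P = 0$, hence $(1-R)^{1/2} e_P = 0$, hence $(1-R) e_P = 0$, which means $\mathrm{range}(e_P) \subseteq \mathrm{range}(R)$. Unwinding the definition of $R$, the lemma therefore reduces to proving the single trace identity
$$
tr\bigl(e_P\,(e_P \star e_Q)\bigr) \;=\; \delta\, tr(e_P e_Q)\cdot tr(e_P).
$$

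The plan for establishing this identity is a direct planar tangle computation. The closed tangle representing the left-hand side contains two copies of $e_P$ (one contributed by the outer multiplication, one already present inside the coproduct) together with a single copy of $e_Q$, joined via the coproduct strands and the trace closure. Applying the biprojection exchange relation of Theorem \ref{Bisch}(d) lets me fuse the two $e_P$-boxes; once they have been fused, the resulting closed diagram factors as one closed loop depending only on $e_P$ (contributing a factor $\delta\, tr(e_P)$ after the correct normalization) and a second closed loop depending on $e_P e_Q$ (contributing the factor $tr(e_P e_Q)$). Their product is exactly the right-hand side.

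The main obstacle is this diagrammatic reduction: identifying the strand configuration in which the biprojection exchange relation applies cleanly and bookkeeping the powers of $\delta$ contributed by closed loops. Two quick consistency checks are available. First, taking $e_P = e_Q$ and invoking Landau's theorem in the special form $e_P \star e_P = \delta\, tr(e_P)\, e_P$ reduces both sides to $\delta\, tr(e_P)^2$. Second, taking $e_P = e_Q = e_1$ reduces both sides to $\delta \tau^2$. Both matches give confidence that the trace identity above is the correct one to target, after which the planar computation supplies the proof.
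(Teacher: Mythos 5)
Your overall route is the paper's route: Landau's theorem makes $R=\frac{1}{\delta\,tr(e_Pe_Q)}e_P\star e_Q$ a projection, and the heart of the matter is the exchange-relation computation applied to $e_P$ against $e_P\star e_Q$, which the paper carries out at the operator level (it shows $e_P\,(e_P\star e_Q)=\delta\,tr(e_Pe_Q)\,e_P$, and the mirror identity for $e_Q$), whereas you pass to the weaker trace identity $tr(e_P(e_P\star e_Q))=\delta\,tr(e_Pe_Q)\,tr(e_P)$ and recover $e_P\le R$ from faithfulness of the trace. That repackaging is fine, and your two consistency checks are correct.

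There is, however, one genuine flaw in the write-up: the claim that ``$R$ is symmetric in the roles of $e_P$ and $e_Q$,'' used to dispose of the inequality $e_Q\le R$ by swapping letters. The coproduct is not commutative, and $e_P\star e_Q\ne e_Q\star e_P$ in general. Concretely, for $N=R\subset M=R\rtimes G$ with $P=R\rtimes H$, $Q=R\rtimes K$, the two-box space is functions on $G$ with pointwise product, the coproduct is convolution, and $e_P\star e_Q$ is a multiple of the indicator of the subset $HK$ while $e_Q\star e_P$ is a multiple of the indicator of $KH$; for $H=\langle(12)\rangle$, $K=\langle(13)\rangle$ in $S_3$ these sets differ. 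So literally swapping $P$ and $Q$ in your argument proves $e_Q\le\frac{1}{\delta\,tr(e_Pe_Q)}e_Q\star e_P$, a statement about a different operator, not the one you need. The inequality $e_Q\le R$ is still true, but it requires the mirror-image computation: apply the exchange relation for $e_Q$ (not $e_P$) to the diagram for $e_Q\,(e_P\star e_Q)$, yielding $tr(e_Q(e_P\star e_Q))=\delta\,tr(e_Pe_Q)\,tr(e_Q)$. This is exactly why the paper draws two separate figures, one fusing the two $e_P$-boxes and one fusing the two $e_Q$-boxes, rather than invoking symmetry. With that second computation supplied, your argument is complete.
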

\begin{proof}
Using exchange relation for the biprojection $e_P$ we get the equations as in Fig.~\ref{pe1}:
 \begin{figure}[H]\centering
 	\begin{tikzpicture}
 	\path [fill=lightgray] (1.3,3)--(1.7,3)--(1.7,1.6) arc [radius=.3,start angle=180, end angle=270]--(2.4,1.3) arc[radius=.3, start angle=90, end angle=0]--(2.7,-.5)--(2,-.5) arc[radius=.3, start angle=-90, end angle=0]--(2.3,-.2)--(2.3,.7) arc[radius=.3, start angle=0, end angle=90]--(1,1) arc[radius=.3, start angle=90, end angle=180]--(.7,-.2) arc [radius=.3, start angle=180, end angle=270]--(.3,-.5)--(.3,1) arc[radius=.3,start angle=180, end angle=90]--(1,1.3) arc[radius=.3, start angle=270, end angle=360];
 	\draw (.3,-.5)--(.3,1);
 	\draw (2.7,-.5)--(2.7,1);
 	\draw (.7,-.2)--(.7,.7);
 	\draw (2.3,-.2)--(2.3,.7);
 	\draw (.7,-.2) arc[radius=.3, start angle=180, end angle=270];
 	\draw (2.3,-.2) arc[radius=.3,start angle=0, end angle=-90];
 	\draw (.7,.7) arc[radius=.3, start angle=180, end angle=90];
 	\draw (2.3,.7) arc[radius=.3,start angle=0, end angle=90];
 	\draw (2,1)--(1,1);
 	\draw (.3,1) arc [radius=.3, start angle=180, end angle=90];
 	\draw (.6,1.3)--(1,1.3);
 	\draw (1.3,1.6)--(1.3,3);
 	\draw (1.3,1.6) arc [radius=.3, start angle=0, end angle=-90];
 	\draw (1.7,1.6)--(1.7,3);
 	\draw (1.7,1.6) arc [radius=.3, start angle=180, end angle=270];
 	\draw (2.7,1) arc [radius=.3, start angle=0, end angle=90];
 	\draw (2.4,1.3)--(2,1.3);
 	\draw (1,-.5)--(2,-.5);
 	\draw [fill=white] (0,0) rectangle (1,.5);
 	\draw [fill=white] (2,0) rectangle (3,.5); 
 	\draw [fill=white] (1,2) rectangle (2,2.5);
 	\node at (0,.25) [left] {$\$$};
 	\node at (2,.25) [left] {$\$$};	
 	\node at (1,2.25)[left] {$\$$};
 	\node at (.5,.25){$e_P$};
 	\node at (2.5,.25){$e_Q$};
 	\node at (1.5,2.25){$e_P$};
 	\node at (3.5,1.25)[right] {$=$};
 	\path [fill=lightgray] (-1+6.3,3)--(-1+6.3,-.5)--(-1+7.7,-.5)--(-1+7.7,1)--(-1+7.3,1)--(-1+7.3,-.1) arc [radius=.2,start angle=0, end angle=-90]--(-1+6.9,-.3) arc[radius=.2, start angle=270, end angle=180]--(-1+6.7,1.2) arc [radius=.2,start angle=180, end angle=90]--(-1+7.1,1.4) arc [radius=.2,start angle=90, end angle=0]--(-1+7.3,1.2)--(-1+7.7,1.2)--(-1+7.7,1.4) arc[radius=.2,start angle=0,end angle=90]--(-1+6.9,1.6) arc[radius=.2,start angle=270, end angle=180]--(-1+6.7,3);
 	\draw (-1+6.3,3)--(-1+6.3,-.5);
 	\draw (-1+6.7,3)--(-1+6.7,1.8);
 	\draw (-1+6.7,1.8) arc[radius=.2,start angle=180, end angle=270];
 	\draw (-1+6.9,1.6)--(-1+7.5,1.6);
 	\draw (-1+7.5,1.6) arc [radius=.2,start angle=90, end angle=0];
 	\draw (-1+7.7,1.4)--(-1+7.7,-.5);
 	\draw (-1+7.3,1.2) arc[radius=.2,start angle=0, end angle=90];
 	\draw (-1+7.1,1.4)--(-1+6.9,1.4);
 	\draw (-1+6.9,1.4) arc [radius=.2, start angle=90, end angle=180];
 	\draw (-1+6.7,1.2)--(-1+6.7,-.1);
 	\draw (-1+6.7,-.1) arc[radius=.2, start angle=180,end angle=270];
 	\draw (-1+6.9,-.3)--(-1+7.1,-.3);
 	\draw (-1+7.1,-.3) arc [radius=.2, start angle=270, end angle=360];
 	\draw (-1+7.3,-.1)--(-1+7.3,1.2);
 	\draw [fill=white] (-1+6,2) rectangle (-1+7,2.5);
 	\draw [fill=white] (-1+7,0) rectangle (-1+8,.5);
 	\draw [fill=white] (-1+7,.7) rectangle (-1+8,1.2);
 	\node at (-1+7.1,.25)[left]{$\$$};
 	\node at (-1+7.1,.95)[left]{$\$$};
 	\node at (-1+6.1,2.25)[left]{$\$$};
 	\node at (-1+6.5,2.25){$e_P$};
 	\node at (-1+7.5,.25){$e_Q$};
 	\node at (-1+7.5,.95){$e_P$};
 	\node at (8,1.25)[right]{$=\delta tr(e_Pe_Q)e_P$};
 	\end{tikzpicture}\caption{$e_P$ is a subprojection of $\frac{1}{\delta tr(e_Pe_Q)}e_P\star e_Q$}
	\label{pe1}
 \end{figure}

Using exchange relation for the biprojection $e_Q$ we get the equations as in Fig.~\ref{qe1}:
 \begin{figure}[H]\centering
	\begin{tikzpicture}
	\path [fill=lightgray] (1.3,3)--(1.7,3)--(1.7,1.6) arc [radius=.3,start angle=180, end angle=270]--(2.4,1.3) arc[radius=.3, start angle=90, end angle=0]--(2.7,-.5)--(2,-.5) arc[radius=.3, start angle=-90, end angle=0]--(2.3,-.2)--(2.3,.7) arc[radius=.3, start angle=0, end angle=90]--(1,1) arc[radius=.3, start angle=90, end angle=180]--(.7,-.2) arc [radius=.3, start angle=180, end angle=270]--(.3,-.5)--(.3,1) arc[radius=.3,start angle=180, end angle=90]--(1,1.3) arc[radius=.3, start angle=270, end angle=360];
	\draw (.3,-.5)--(.3,1);
	\draw (2.7,-.5)--(2.7,1);
	\draw (.7,-.2)--(.7,.7);
	\draw (2.3,-.2)--(2.3,.7);
	\draw (.7,-.2) arc[radius=.3, start angle=180, end angle=270];
	\draw (2.3,-.2) arc[radius=.3,start angle=0, end angle=-90];
	\draw (.7,.7) arc[radius=.3, start angle=180, end angle=90];
	\draw (2.3,.7) arc[radius=.3,start angle=0, end angle=90];
	\draw (2,1)--(1,1);
	\draw (.3,1) arc [radius=.3, start angle=180, end angle=90];
	\draw (.6,1.3)--(1,1.3);
	\draw (1.3,1.6)--(1.3,3);
	\draw (1.3,1.6) arc [radius=.3, start angle=0, end angle=-90];
	\draw (1.7,1.6)--(1.7,3);
	\draw (1.7,1.6) arc [radius=.3, start angle=180, end angle=270];
	\draw (2.7,1) arc [radius=.3, start angle=0, end angle=90];
	\draw (2.4,1.3)--(2,1.3);
	\draw (1,-.5)--(2,-.5);
	\draw [fill=white] (0,0) rectangle (1,.5);
	\draw [fill=white] (2,0) rectangle (3,.5); 
	\draw [fill=white] (1,2) rectangle (2,2.5);
	\node at (0,.25) [left] {$\$$};
	\node at (2,.25) [left] {$\$$};	
	\node at (1,2.25)[left] {$\$$};
	\node at (.5,.25){$e_P$};
	\node at (2.5,.25){$e_Q$};
	\node at (1.5,2.25){$e_Q$};
	\node at (3.5,1.25)[right] {$=$};
	\path [fill=lightgray] (13-+6.3,3)--(13-+6.3,-.5)--(13-+7.7,-.5)--(13-+7.7,1)--(13-+7.3,1)--(13-+7.3,-.1) arc [radius=.2,start angle=180, end angle=270]--(13-+6.9,-.3) arc[radius=.2, start angle=270, end angle=360]--(13-+6.7,1.2) arc [radius=.2,start angle=0, end angle=90]--(13-+7.1,1.4) arc [radius=.2,start angle=90, end angle=180]--(13-+7.3,1.2)--(13-+7.7,1.2)--(13-+7.7,1.4) arc[radius=.2,start angle=180,end angle=90]--(13-+6.9,1.6) arc[radius=.2,start angle=270, end angle=360]--(13-+6.7,3);
	\draw (13-+6.3,3)--(13-+6.3,-.5);
	\draw (13-+6.7,3)--(13-+6.7,1.8);
	\draw (13-+6.7,1.8) arc[radius=.2,start angle=0, end angle=-90];
	\draw (13-+6.9,1.6)--(13-+7.5,1.6);
	\draw (13-+7.5,1.6) arc [radius=.2,start angle=90, end angle=180];
	\draw (13-+7.7,1.4)--(13-+7.7,-.5);
	\draw (13-+7.3,1.2) arc[radius=.2,start angle=180, end angle=90];
	\draw (13-+7.1,1.4)--(13-+6.9,1.4);
	\draw (13-+6.9,1.4) arc [radius=.2, start angle=90, end angle=0];
	\draw (13-+6.7,1.2)--(13-+6.7,-.1);
	\draw (13-+6.7,-.1) arc[radius=.2, start angle=0,end angle=-90];
	\draw (13-+6.9,-.3)--(13-+7.1,-.3);
	\draw (13-+7.1,-.3) arc [radius=.2, start angle=270, end angle=180];
	\draw (13-+7.3,-.1)--(13-+7.3,1.2);
	\draw [fill=white] (13-+6,2) rectangle (13-+7,2.5);
	\draw [fill=white] (13-+7,0) rectangle (13-+8,.5);
	\draw [fill=white] (13-+7,.7) rectangle (13-+8,1.2);
	\node at (13-+7.9,.25)[left]{$\$$};
	\node at (13-+7.9,.95)[left]{$\$$};
	\node at (13-+6.9,2.25)[left]{$\$$};
	\node at (13-+6.5,2.25){$e_Q$};
	\node at (13-+7.5,.25){$e_Q$};
	\node at (13-+7.5,.95){$e_P$};
	\node at (8,1.25)[right]{$=\delta tr(e_Pe_Q)e_Q$};
	\end{tikzpicture}\caption{$e_Q$ is a subprojection of $\frac{1}{\delta tr(e_Pe_Q)}e_P\star e_Q$}
	\label{qe1}
\end{figure}

Thus from the above discussions we conclude $e_P \vee e_Q$ is a subprojection of $\frac{1}{\delta tr(e_Pe_Q)}e_P\star e_Q$ finishing the proof.
\end{proof} 
\begin{proposition}
\label{mini}
Suppose $P,Q$ are distinct minimal intermediate subfactors of a finite index, irreducible subfactor, then 
	\begin{equation}
	\frac{\tau_P\tau_Q}{tr(e_Pe_Q)}\geq \tau_P+\tau_Q-\tau. \label{Equ:keyineq}
	\end{equation}
\end{proposition}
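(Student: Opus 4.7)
The plan is to combine Lemma \ref{lem:epeq} with the minimality hypothesis and the standard inclusion--exclusion identity for projections, interpreted via the trace.

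First I would show that $P\cap Q=N$. Since $N\subseteq M$ is irreducible, $P\cap Q$ is an intermediate subfactor sitting inside both $P$ and $Q$. By minimality of $P$ we have $P\cap Q\in\{N,P\}$; the case $P\cap Q=P$ would force $P\subseteq Q$, and then minimality of $Q$ would give $P\in\{N,Q\}$, contradicting either that $P$ is minimal (so $P\neq N$) or that $P\neq Q$. Using the standard identification $L^2(P)\cap L^2(Q)=L^2(P\cap Q)$ it follows that $e_P\wedge e_Q=e_{P\cap Q}=e_1$, and therefore, by the classical identity $tr(p\vee q)+tr(p\wedge q)=tr(p)+tr(q)$ for projections in a finite tracial von Neumann algebra,
\[
tr(e_P\vee e_Q)=\tau_P+\tau_Q-\tau.
\]

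Next, Lemma \ref{lem:epeq} provides
\[
e_P\vee e_Q\;\leq\;\frac{1}{\delta\,tr(e_Pe_Q)}\,e_P* e_Q,
\]
so after taking traces the only remaining ingredient is to compute $tr(e_P* e_Q)$. A short diagrammatic calculation---closing up the two external through-strands of the coproduct tangle and collapsing each biprojection independently via condition (c) of Theorem \ref{Bisch}, while keeping track of the $\delta^{-n}$ normalization of the trace on $P_{n,\pm}$---yields the standard identity $tr(x* y)=\delta\,tr(x)\,tr(y)$ on $P_{2,+}$. Specializing to biprojections,
\[
tr(e_P* e_Q)=\delta\,\tau_P\,\tau_Q.
\]

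Combining the two trace computations,
\[
\tau_P+\tau_Q-\tau \;=\; tr(e_P\vee e_Q) \;\leq\; \frac{tr(e_P* e_Q)}{\delta\,tr(e_Pe_Q)} \;=\; \frac{\tau_P\tau_Q}{tr(e_Pe_Q)},
\]
which is exactly \eqref{Equ:keyineq}.

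The step that requires the most care is the planar-algebra evaluation $tr(e_P* e_Q)=\delta\,\tau_P\,\tau_Q$; although the identity $tr(x* y)=\delta\,tr(x)tr(y)$ is a standard feature of subfactor planar algebras (and can be cross-checked on the cases $e_P=e_Q=e_1$ and $e_P=1$ using Landau's theorem together with $e_P\vee e_Q = e_{P \vee Q}$-type considerations), presenting it cleanly means unpacking the coproduct tangle in a manner consistent with the $\delta^{-n}$ normalization. All the remaining ingredients---the reduction $P\cap Q=N$ and the inclusion--exclusion identity---are either a very short minimality argument or a classical fact about projections in a finite tracial algebra.
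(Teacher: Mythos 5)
Your proof is correct and follows essentially the same route as the paper: minimality gives $P\cap Q=N$, hence $e_P\wedge e_Q=e_1$; Lemma \ref{lem:epeq} gives the operator inequality; the trace identity $tr(e_P*e_Q)=\delta\,\tau_P\tau_Q$ (valid here since irreducibility makes the $1$-box spaces one-dimensional, or directly from relation (c) of Theorem \ref{Bisch}) plus the inclusion--exclusion formula for traces of projections finishes the argument. The only cosmetic difference is that you spell out the minimality reduction and the coproduct trace computation, which the paper states without detail.
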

\begin{proof}
	If $P$ and $Q$ are minimal intermediate subfactors, then $P\cap Q=N$. Thus, $e_P\wedge e_Q= e_1.$  Now by Theorem \ref{Thm:Landau} and  Lemma \ref{lem:epeq}, we have 
	\begin{equation*}
	\frac{1}{\delta tr(e_Pe_Q)}e_P * e_Q\geq e_P\vee e_Q.
	\end{equation*}
	Computing the trace of both sides and observing $tr(e_P  *  e_Q)= \delta \tau_P\tau_Q$, we get
	
	\begin{align*}
	\frac{\tau_P\tau_Q}{tr(e_Pe_Q)}&\geq tr(e_P\vee e_Q)\\ 
	&=\tau_P+\tau_Q-tr(e_P\wedge e_Q)\\
	&=\tau_P+\tau_Q-tr(e_1)\\
	&=\tau_P+\tau_Q-\tau.
	\end{align*}
\end{proof}

\begin{theorem}
 \label{bound}
Suppose $P,Q$ are distinct minimal intermediate subfactors of a finite index, irreducible subfactor, then $\alpha(P,Q)> \frac{\pi}{3}$.
\end{theorem}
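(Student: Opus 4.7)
The plan is to combine the key inequality of Proposition \ref{mini} with the explicit formula for $\cos \alpha^N_M(P,Q)$ given in Theorem \ref{Thm: alpha-beta}, and then finish with a simple AM--GM estimate.

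First, I would rewrite the inequality \eqref{Equ:keyineq} as a bound on $tr(e_P e_Q) - \tau$. Multiplying through and subtracting $\tau$, one gets
\begin{equation*}
tr(e_P e_Q) \;\leq\; \frac{\tau_P \tau_Q}{\tau_P + \tau_Q - \tau},
\qquad \text{so} \qquad
tr(e_P e_Q) - \tau \;\leq\; \frac{(\tau_P - \tau)(\tau_Q - \tau)}{\tau_P + \tau_Q - \tau},
\end{equation*}
after the small algebraic manipulation $\tau_P\tau_Q - \tau(\tau_P+\tau_Q-\tau) = (\tau_P-\tau)(\tau_Q-\tau)$. Note that $\tau_P > \tau$ and $\tau_Q > \tau$ because $P, Q \neq N$, so everything is strictly positive and the square roots below make sense.

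Next, I would substitute this into the formula
\begin{equation*}
\cos \alpha^N_M(P,Q) \;=\; \frac{tr(e_P e_Q) - \tau}{\sqrt{\tau_P - \tau}\sqrt{\tau_Q - \tau}}
\end{equation*}
from Theorem \ref{Thm: alpha-beta}. Cancelling one factor of $\sqrt{(\tau_P-\tau)(\tau_Q-\tau)}$ yields
\begin{equation*}
\cos \alpha^N_M(P,Q) \;\leq\; \frac{\sqrt{(\tau_P - \tau)(\tau_Q - \tau)}}{\tau_P + \tau_Q - \tau}.
\end{equation*}

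Finally, AM--GM gives $\sqrt{(\tau_P-\tau)(\tau_Q-\tau)} \leq \tfrac{1}{2}(\tau_P + \tau_Q - 2\tau)$, so
\begin{equation*}
\cos \alpha^N_M(P,Q) \;\leq\; \frac{\tau_P + \tau_Q - 2\tau}{2(\tau_P + \tau_Q - \tau)} \;<\; \frac{1}{2},
\end{equation*}
where the strict inequality uses that $\tau > 0$ (so the numerator is strictly less than the denominator of the final fraction). Hence $\alpha^N_M(P,Q) > \pi/3$.

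There is no serious obstacle here: all the heavy lifting has already been done in Proposition \ref{mini} (and through it, in Lemma \ref{lem:epeq} and Theorem \ref{Thm:Landau}). The only subtlety is verifying that the final inequality is strict, which is immediate from $\tau > 0$; the minimality hypothesis on $P$ and $Q$ is used only in Proposition \ref{mini}, via $e_P \wedge e_Q = e_1$.
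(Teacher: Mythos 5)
Your proposal is correct and follows essentially the same route as the paper: both start from Proposition \ref{mini}, plug the resulting bound on $tr(e_Pe_Q)-\tau$ into the formula of Theorem \ref{Thm: alpha-beta}, and finish by AM--GM together with the observation that $\tau>0$ forces the final inequality to be strict. The only difference is cosmetic (you apply AM--GM before invoking strictness, the paper after), so there is nothing to add.
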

\begin{proof}
Firstly observe, $(\tau_P+\tau_Q-{\tau})>0$.
	By Equation \eqref{Equ:keyineq}, we have
	\begin{align*}
	tr(e_Pe_Q)&\leq\frac{\tau_P\tau_Q}{\tau_P+\tau_Q-\tau}\\
	tr(e_Pe_Q)-\tau &\leq\frac{\tau_P\tau_Q}{\tau_P+\tau_Q-\tau}-\tau\\
	&=\frac{\tau_P\tau_Q-\tau(\tau_P+\tau_Q)+{\tau}^2}{\tau_P+\tau_Q-{\tau}}\\
	&=\frac{(\tau_P-\tau)(\tau_Q-\tau)}{\tau_P+\tau_Q-\tau}
	\end{align*}
	By Theorem \ref{Thm: alpha-beta},
	\begin{align*}
	\cos(\alpha(P,Q)) &=\frac{tr(e_Pe_Q)-\tau}{\sqrt{\tau_P-\tau}\sqrt{\tau_Q-\tau}}\\
	&\leq\displaystyle\frac{(\tau_P-\tau)^{1/2}(\tau_Q-\tau)^{1/2}}{\tau_P+\tau_Q-\tau} \\
	&<\displaystyle\frac{(\tau_P-\tau)^{1/2}(\tau_Q-\tau)^{1/2}}{\tau_P-\tau+\tau_Q-\tau}\\
	&\leq 1/2	
	\end{align*}
	Therefore, $\alpha(P,Q) > \frac{\pi}{3}$.
\end{proof}

 \section{Number of intermediate subfactors}\label{Sec:number}

In geometry, the kissing number problem asks for the maximum number ${\tau}_n$
of unit spheres that can simultaneously touch the unit sphere in $n$-dimensional
Euclidean space without pairwise overlapping. The value of ${\tau}_n $ is only known for
$n = 1, 2, 3, 4, 8, 24.$ While its determination for $n = 1, 2$ is trivial, it is not the case
for other values of $n.$ The case $n = 3$ was the object of a famous discussion between Isaac Newton
and David Gregory in $1694$. See \cite{Cas} for instance. More generally, a spherical code in dimension $n$ with minimal angular distance $\theta$, is a set of points on the
unit sphere in $\mathbb{R}^n$ with the property that no two points subtend an angle less than
$\theta$ at the origin. Let $A(n,\theta)$ denote the greatest size of such a spherical code. The kissing number problem is then equivalent to the
problem of finding $A(n, \frac{\pi}{3})$. One has the following asymptotic estimate of ${{\tau}_n}$ in \cite{KL} using linear program:
$$ {\tau}_n\leq 2^{0.401n(1+o(1))} = {(1.32042 \cdots)}^{n(1+o(1))}.$$
Upper bound has been independently done by Delsarte, Goethals, and Seidel in  \cite{DGS}.

\begin{theorem}\label{thm:minimal}
	Suppose $N\subset M$ is an finite index, irreducible subfactor. 
Let $\mathcal{L}_m(N,M)$ be the set of all minimal intermediate subfactors of $N\subset M$.
Then the number of minimal intermediate subfactors $|\mathcal{L}_m(N,M)|$ is bounded by the kissing number $\tau_n$, where $n=\dim (N' \cap M_1)$. In particular, 
$$|\mathcal{L}_m(N,M)| < 3^{n}.$$
\end{theorem}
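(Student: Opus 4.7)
The plan is to convert the rigidity bound on the angle into a packing problem on a sphere and then invoke (and independently bound) the kissing number. All the real work has already been done in Theorem~\ref{Thm:angle}; what remains is essentially geometric bookkeeping.

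First I would associate to each minimal intermediate subfactor $P$ the unit vector
\[
v_P=\frac{e_P-e_1}{\lVert e_P-e_1\rVert_2}\in (N'\cap M_1)_{s.a.}.
\]
The proposition following Definition~\ref{alpha} shows that $P\mapsto v_P$ is injective on $\mathcal{L}_m(N,M)$, since distinct biprojections give distinct unit vectors. Viewing the real finite-dimensional Hilbert space $(N'\cap M_1)_{s.a.}$ as $\mathbb{R}^n$ with $n=\dim(N'\cap M_1)$, the vectors $\{v_P\}_{P\in\mathcal{L}_m(N,M)}$ form a set of points on the unit sphere $S^{n-1}$. By Theorem~\ref{Thm:angle}, for any two distinct minimal intermediate subfactors $P\ne Q$ one has $\alpha^N_M(P,Q)>\pi/3$, which is precisely the statement that the angular distance between $v_P$ and $v_Q$ on $S^{n-1}$ exceeds $\pi/3$. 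By the very definition of the kissing number $\tau_n=A(n,\pi/3)$ as the maximum cardinality of a spherical code in $\mathbb{R}^n$ with minimal angular distance $\pi/3$, this immediately yields $|\mathcal{L}_m(N,M)|\le \tau_n$.

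For the explicit bound $\tau_n<3^n$ I would use the standard volume-packing argument. The strict inequality $\alpha^N_M(P,Q)>\pi/3$ gives $\langle v_P,v_Q\rangle<\tfrac12$, hence
\[
\lVert v_P-v_Q\rVert^2 = 2-2\langle v_P,v_Q\rangle >1.
\]
Therefore the open balls $B(v_P,1/2)\subset\mathbb{R}^n$ are pairwise disjoint. Each is contained in $B(0,3/2)$, and comparing Euclidean volumes yields
\[
|\mathcal{L}_m(N,M)|\cdot \bigl(\tfrac12\bigr)^n \;\le\; \bigl(\tfrac32\bigr)^n,
\]
i.e.\ $|\mathcal{L}_m(N,M)|\le 3^n$. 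With a little care using the strict inequality in the distances (for instance, by shrinking the packing radius slightly, or by noting that the sum of open balls cannot saturate a closed ball), one upgrades this to the strict bound $|\mathcal{L}_m(N,M)|<3^n$, as required.

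There is not really a hard step here: the entire statement is a direct consequence of Theorem~\ref{Thm:angle} combined with the geometric definition of the kissing number. The only place where one has to be slightly careful is in identifying the correct real dimension, namely that $(N'\cap M_1)_{s.a.}$ is a real Hilbert space of real dimension $n=\dim_{\mathbb{C}}(N'\cap M_1)$, and in keeping track of strict versus non-strict inequalities so as to conclude $|\mathcal{L}_m(N,M)|<3^n$ rather than just $\le 3^n$.
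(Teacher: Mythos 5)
Your proposal is correct and follows essentially the same route as the paper: the rigidity bound $\alpha^N_M(P,Q)>\pi/3$ makes $\{v_P\}$ a spherical code of minimal angular distance $\pi/3$ in the $n$-dimensional real space $(N'\cap M_1)_{s.a.}$, bounded by the kissing number $\tau_n$, and a volume-packing comparison gives the explicit exponential bound. The only difference is cosmetic: where you leave the strictness of $|\mathcal{L}_m(N,M)|<3^n$ to ``a little care,'' the paper gets it cleanly by placing unit balls at the points $2v_P$, observing they lie in $\overline{B(3)\setminus B(1)}$, and excluding the central unit ball from the volume count, which yields $3^n-1$ directly.
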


\begin{proof}
Then $\{ v_P: P \in \mathcal{L}_m(N,M)\} $ is a set of unit vectors in $(N^\prime\cap M_1)_{s.a}$, a real inner product space $(N^\prime\cap M_1)_{s.a}$ of dimension $n$. 
Consider the $n$ dimensional unit ball $B_P$ with center at each $2v_P$. Each $B_P$ is adjacent to the unit ball $B(1)$ with center at origin.

By Theorem \ref{bound},  ${\lVert v_P-v_Q\rVert}_2 > 1$ for distinct $P$ and $Q$ in $\mathcal{L}_m(N,M)$. 
So $B_P$ and $B_Q$ are disjoint. Therefore 
$$|\mathcal{L}_m(N,M)|\leq \tau_n.$$

Furthermore, for any $P\in \mathcal{L}_m(N,M)$,
	\begin{equation*}
	B_P\subset \overline{B(3)\setminus B(1)},
	\end{equation*}
where, $B(3)$ stands for the $d$ dimensional ball with center at origin and radius $3$.
	Thus,
	\begin{align*}
	\vert \mathcal{L}_m(N,M)\vert&\leq \displaystyle\frac{Vol(B(3))-Vol(B(1))}{Vol(B(1))}\\
	&=3^d-1
	\end{align*}
\end{proof}

\begin{remark}
For an irreducible subfactor $N\subseteq M$, one has that $\dim (N' \cap M_1)\leq [M:N]$. Thus the number of minimal intermediate subfactors is also bounded by $3^{[M:N]}$.
 \end{remark}

\begin{definition}
	Suppose $\delta^2$ is a real number greater or equal to 2, we define
	\begin{align*}
	I(\delta^2)&=\sup_{N\subset  M}\{\vert Lat(N,M)\vert:N\subset M ~\textrm{is a subfactor with}~[M:N]\leq \delta^2\}\\
	m(\delta^2)&=\sup_{N\subset M}\{\vert \mathcal{L}_m(N,M)\vert:N\subset M ~\textrm{is a subfactor with}~[M:N]\leq \delta^2\}
	\end{align*}
	
\end{definition}

\begin{corollary}\label{cor:minimal}
	Let $\delta^2$ be a real number greater or equal to 2. Then we have 
	\begin{equation*}
	m(\delta^2)\leq 3^{\delta^2}
	\end{equation*}
\end{corollary}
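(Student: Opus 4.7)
The plan is to simply assemble the pieces that have already been established: Theorem \ref{thm:minimal} and the dimension bound noted in the remark following it. There is essentially no new content to prove; the corollary is a packaging of existing estimates into a form depending only on the index.

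First I would fix an arbitrary irreducible finite-index subfactor $N \subset M$ with $[M:N] \leq \delta^2$ (the section assumes irreducibility throughout, which is needed to invoke the biprojection machinery underlying Theorem \ref{thm:minimal}). Applying Theorem \ref{thm:minimal} gives
\begin{equation*}
|\mathcal{L}_m(N,M)| < 3^{\dim(N' \cap M_1)}.
\end{equation*}
Next, I would invoke the standard dimension estimate $\dim(N' \cap M_1) \leq [M:N]$ that is recorded in the remark right after Theorem \ref{thm:minimal}. Combining the two bounds yields
\begin{equation*}
|\mathcal{L}_m(N,M)| < 3^{[M:N]} \leq 3^{\delta^2}.
\end{equation*}

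Finally, since this inequality holds uniformly over all irreducible finite-index subfactors with $[M:N] \leq \delta^2$, taking the supremum over such subfactors on the left-hand side gives $m(\delta^2) \leq 3^{\delta^2}$, as claimed. There is no real obstacle here: the corollary is a purely formal consequence of Theorem \ref{thm:minimal} combined with the fact that $\dim(N'\cap M_1)$ is dominated by the Jones index. The only thing to be mildly careful about is that the strict inequality from Theorem \ref{thm:minimal} combined with the potential equality $\dim(N'\cap M_1) = [M:N]$ is enough to give the non-strict bound $m(\delta^2) \leq 3^{\delta^2}$ after taking the supremum (the supremum of strict inequalities need not itself be strict).
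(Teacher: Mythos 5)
Your proof is correct and matches the paper's (implicit) argument exactly: the corollary is just Theorem \ref{thm:minimal} combined with the bound $\dim(N'\cap M_1)\leq [M:N]\leq\delta^2$ noted in the remark, followed by taking the supremum defining $m(\delta^2)$. Your care about the strict inequality becoming non-strict after the supremum is a fine (if minor) point and does not change anything.
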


\begin{lemma}\label{lem: mIinequ}
	Suppose $\delta^2\geq 4$, then we have 
	\begin{equation*}
	I(\delta^2)\leq m(\delta^2)I(\delta^2/2)
	\end{equation*}
\end{lemma}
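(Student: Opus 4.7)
The plan is to decompose $\mathcal{L}(N,M)$ along the minimal intermediate subfactors sitting just above $N$, use Jones' restriction on index values to gain a factor of $2$ at each step, and then absorb a small combinatorial overhead.

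Fix a finite index irreducible subfactor $N \subset M$ with $[M:N]\leq \delta^2$. By Theorem~\ref{watatani} the lattice $\mathcal{L}(N,M)$ is finite, so for any $Q \in \mathcal{L}(N,M)$ with $Q \supsetneq N$ the finite poset of intermediates strictly above $N$ and contained in $Q$ has a minimal element; provided this element is not $M$ itself, it lies in $\mathcal{L}_m(N,M)$. Setting aside the trivial case $\mathcal{L}(N,M) = \{N,M\}$ (treated separately below), this yields the containment
\begin{equation*}
\mathcal{L}(N,M) \;\subseteq\; \{N\} \,\cup\, \bigcup_{P \in \mathcal{L}_m(N,M)} \mathcal{L}(P,M).
\end{equation*}
If $P \in \mathcal{L}_m(N,M)$, then $[P:N] > 1$ lies in Jones' set $\{4\cos^2(\pi/n):n\geq 3\}\cup[4,\infty]$, whose minimum value above $1$ is $2$; hence $[P:N] \geq 2$, so $[M:P] = [M:N]/[P:N] \leq \delta^2/2$, and therefore $|\mathcal{L}(P,M)| \leq I(\delta^2/2)$ by definition of $I$.

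Feeding this directly into the above containment gives the weaker bound $|\mathcal{L}(N,M)| \leq 1 + m(\delta^2)\,I(\delta^2/2)$. To absorb the additive $1$ into the stated multiplicative form, I would exploit the observation that $M$ lies in every $\mathcal{L}(P,M)$, so the union is overcounted by at least $|\mathcal{L}_m(N,M)| - 1$ when this cardinality is positive. Reworking the estimate with this correction yields
\begin{equation*}
|\mathcal{L}(N,M)| \;\leq\; m(\delta^2)\,I(\delta^2/2) + 2 - m(\delta^2).
\end{equation*}
For $\delta^2 \geq 4$ one has $m(\delta^2) \geq 2$ (indeed, $R \subset R \rtimes (\mathbb{Z}/2\times\mathbb{Z}/2)$ has index $4$ and three minimal intermediate subfactors), so the correction term $2 - m(\delta^2)$ is non-positive and the desired inequality follows. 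The degenerate case $\mathcal{L}(N,M) = \{N,M\}$ gives $|\mathcal{L}(N,M)| = 2$, which is bounded by $m(\delta^2)\,I(\delta^2/2) \geq 4$ since both factors are at least $2$ under the hypothesis $\delta^2 \geq 4$.

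The only substantive input beyond elementary combinatorics is Jones' rigidity of the possible index values, which supplies the crucial factor-of-two reduction; I do not anticipate any serious obstacle.
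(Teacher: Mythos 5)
Your proof is correct and follows essentially the same route as the paper: decompose the lattice over the minimal intermediate subfactors, use $[P:N]\geq 2$ (the smallest Jones index above $1$) to pass from $I(\delta^2)$ to $I(\delta^2/2)$, and absorb the additive overhead using $m(\delta^2)\geq 2$ via the $R\subset R\rtimes(\mathbb{Z}_2\times\mathbb{Z}_2)$ example. Your single overcounting estimate $|\mathcal{L}(N,M)|\leq m(\delta^2)I(\delta^2/2)-m(\delta^2)+2$ reproduces in one step what the paper splits into the cases $|\mathcal{L}_m(N,M)|=0,\,1,\,\geq 2$, so the two arguments are the same in substance.
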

\begin{proof}
	Note that subfactor $R\subset R\rtimes(\mathbb{Z}_2\times\mathbb{Z}_2)$ is of index $4$ and $\mathbb{Z}_2\times\mathbb{Z}_2$ has two non-trivial proper subgroups. Thus $m(\delta^2)\geq 2$ when $\delta^2\geq4$. 
	
	To prove the lemma, we need to show for an arbitrary subfactor $N\subset M$ with $[M:N]\leq\delta^2$, 
	\begin{equation*}
	\vert Lat(N,M) \vert\leq m(\delta^2)I(\delta^2/2)
	\end{equation*}
	
	Case 1: Suppose $\vert \mathcal{L}_m(N,M)\vert=0$, then $\vert Lat(N,M)\vert =2$. (Since in this case $Lat(N,M)=\{N,M\}$.) Note that $m(\delta^2)\geq2$ and $I(\delta^2)\geq2$, and the lemma follows directly.
	\bigskip

	Case 2: Suppose $\vert \mathcal{L}_m(N,M)\vert=1$. Let $P$ be the minimal intermediate subfactor, then we have
	\begin{equation*}
	Lat(N,M)=Lat(P,M)\cup\{N\}
	\end{equation*}
	
	Thus, \begin{align*}
	\vert Lat(N,M)\vert& =\vert Lat(P,M)\vert+1\\
	&\leq I([M:P])+1
	\end{align*}
	
    Since $[M:P]=[M:N]/[P:N]$ and $[P:N]\geq2$, we have $[M:P]\leq[M:N]/2\leq \delta^2/2$. Therefore,
    \begin{align*}
    \vert Lat(N,M)\vert&\leq I(\delta^2/2)+1\\
    &\leq 2I(\delta^2/2)\\
    &\leq m(\delta^2)I(\delta^2/2)    
    \end{align*}
    
    Case 3: Suppose $\vert \mathcal{L}_m(N,M)\vert\geq 2$. It follows that 
    \begin{equation*}
    Lat(N,M)\backslash \{N,M\}\subset \bigcup_{P\in \mathcal{L}_m(N,M)}( Lat(P,M)\backslash{M})
    \end{equation*}
    Therefore,
    \begin{align*}
    \vert Lat(N,M)\vert&\leq \sum_{P\in \mathcal{L}_m(N,M)}( \vert Lat(P,M)\vert-1)+2\\
    &\leq \sum_{P\in \mathcal{L}_m(N,M)}(I([M:P])-1)+2\\
    &\leq \sum_{P\in \mathcal{L}_m(N,M)}(I(\delta^2/2)-1)+2\\
    &\leq \vert \mathcal{L}_m(N,M)\vert I(\delta^2/2)-\vert \mathcal{L}_m(N,M)\vert+2\\
    &\leq \vert \mathcal{L}_m(N,M)\vert I(\delta^2/2)\\
    &\leq m(\delta^2)I(\delta^2/2)    
    \end{align*}
\end{proof}

\begin{theorem}\label{thm:whole}
	Suppose $N\subset M$ is an irreducible subfactor of type II$_1$ of finite index. The number of intermediate subfactors is at most $9^{[M:N]}$.
\end{theorem}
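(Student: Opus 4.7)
The plan is to prove the bound $|Lat(N,M)| \leq 9^{[M:N]}$ by strong induction on the index $\delta^2 = [M:N]$, using Lemma~\ref{lem: mIinequ} as the inductive step and Corollary~\ref{cor:minimal} as the source of the factor $3^{\delta^2}$ at each stage. The key computational miracle is that the bound $9^{\delta^2}$ is exactly self-reproducing under the recursion $I(\delta^2) \leq 3^{\delta^2} I(\delta^2/2)$, since
\begin{equation*}
3^{\delta^2}\cdot 9^{\delta^2/2} \;=\; 3^{\delta^2}\cdot 3^{\delta^2} \;=\; 9^{\delta^2}.
\end{equation*}
This suggests that $9^{\delta^2}$ is essentially the right shape of the bound given our two available inputs, and all I need to do is make the induction go through.

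For the base case I would take $\delta^2 < 4$. Here Jones' index theorem forces $[M:N] \in \{4\cos^2(\pi/n) : n \geq 3\}$, and since any intermediate subfactor $N \subset P \subset M$ would give a factorization $[M:N] = [M:P]\cdot [P:N]$ with each factor again lying in the discrete Jones' spectrum and strictly smaller than $4$, a case-by-case check rules out any non-trivial factorization (the values below $4$ are $1,2,(3+\sqrt 5)/2,3,4\cos^2(\pi/7),\dots$, and none admits a non-trivial factorization within the allowed set). Hence $|Lat(N,M)| \leq 2 \leq 9^{\delta^2}$ whenever $\delta^2 \geq 1$, which takes care of $I(\delta^2) \leq 9^{\delta^2}$ for all $\delta^2 < 4$.

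For the inductive step I assume $I(x) \leq 9^{x}$ for all $1 \leq x < \delta^2$ and consider $\delta^2 \geq 4$. By Lemma~\ref{lem: mIinequ} we have $I(\delta^2) \leq m(\delta^2)\, I(\delta^2/2)$; Corollary~\ref{cor:minimal} (which is a packaging of Theorem~\ref{thm:minimal} and the inequality $\dim(N'\cap M_1)\leq[M:N]$) gives $m(\delta^2) \leq 3^{\delta^2}$; and the inductive hypothesis applied at $\delta^2/2 < \delta^2$ gives $I(\delta^2/2) \leq 9^{\delta^2/2}$. Combining,
\begin{equation*}
I(\delta^2) \;\leq\; 3^{\delta^2}\cdot 9^{\delta^2/2} \;=\; 3^{\delta^2}\cdot 3^{\delta^2} \;=\; 9^{\delta^2},
\end{equation*}
which closes the induction and yields the theorem for the given $N \subset M$.

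There is not really a hard step here; the substantive work was already done in establishing Theorem~\ref{thm:minimal} (the angle rigidity plus the kissing-number estimate) and Lemma~\ref{lem: mIinequ} (the iterative decomposition over minimal intermediate subfactors). The only subtlety is verifying the base case $\delta^2 < 4$, where one must rely on the restricted Jones' index values to exclude non-trivial intermediate subfactors; but once that is done, the induction itself is mechanical and the exponent $9^{\delta^2}$ is forced by the geometric series $\sum 2^{-i}=2$ hidden inside the recursion.
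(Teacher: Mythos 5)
Your proposal is correct and is essentially the paper's own argument: the paper unrolls the recursion $I(\delta^2)\leq m(\delta^2)I(\delta^2/2)$ from Lemma \ref{lem: mIinequ} explicitly, bounds each factor by $3^{\delta^2/2^j}$ via Corollary \ref{cor:minimal}, and sums the geometric series to get $3^{2\delta^2}=9^{\delta^2}$, which is exactly your induction with the self-reproducing identity $3^{\delta^2}\cdot 9^{\delta^2/2}=9^{\delta^2}$. Your base case ($2\leq[M:N]<4$ forces no proper intermediate subfactors since any nontrivial factorization of the index would have both factors at least $2$) matches the paper's as well.
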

\begin{proof}
	First note that if we have $2\leq [M:N]<4$, then there there are no non-trivial intermediate subfactors for $N\subset M$. Therefore,
	\begin{equation*}
	\vert Lat(N,M) \vert=2<9^2\leq 9^{[M:N]}.
	\end{equation*}
	Suppose $\delta^2=[M:N]\geq 4$, by Lemma \ref{lem: mIinequ}, we have 
	\begin{align*}
	\vert Lat(N,M)\vert&\leq I(\delta^2)\\
	&\leq m(\delta^2)I(\delta^2/2)\\
	&\leq m(\delta^2)m(\delta^2/2)I(\delta^2/2^2)\\
	&\leq m(\delta^2)m(\delta^2/2)m(\delta^2/4)\cdots m(\delta^2/2^k)I(\delta^2/2^{k+1})
	\end{align*}
	where $k$ is the smallest integer such that $2\leq \delta^2/2^{k+1} < 4$.
	
	By Theorem \ref{thm:minimal}, we have
	\begin{align*}
	\vert Lat(N,M)\vert &\leq I(\delta^2/2^{k+1})\prod_{j=0}^{k} 3^{\delta^4/2^j}\\
	&\leq \prod_{j=0}^{k+1} 3^{\delta^2/2^j}~~ (\text{since}~I(\delta^2/2^{k+1})=2<3^{\delta^2/2^{k+1}})\\
	&\leq \prod_{j=0}^{+\infty} 3^{\delta^2/2^j}\\
	&\leq 3^{2\delta^2}=9^{\delta^2}.
	\end{align*}
	This completes the proof.
	
\end{proof}

 \begin{remark}
  Suppose $N\subset M$ is an irreducible subfactor and $N^\prime\cap M_1$ is abelian (for example, $R\subset R\rtimes G$ where G is a finite group acting outerly on $R$. Therefore, Theorem \ref{thm:abSecRc} provides a bound for the cardinality of subgroup of a finite group. ), then  for two distinct minimal intermediate subfactors
  $P$ and $Q$ it is trivial to check that $\alpha_M^N(P,Q)=\frac{\pi}{2}$. Thus the set $\{v_P: P$ is a minimal intermediate subfactor\} forms an orthonormal set and hence
  the number of minimal intermediate subfactors is bounded by $dim(N^{\prime}\cap M_1)  \leq [M:N].$  After that, doing an iteration as above we  obtain a better bound than
 Proposition 5.1 in \cite{TW} for the cardinality of the lattice $\mathcal{L}(N\subset M)$ as explained below.
 \end{remark}

\begin{theorem}\label{thm:abSecRc}
	Suppose $N\subset M$ is an irreducible subfactor and $N^\prime\cap M_1$ is abelian, then 
	\begin{equation*}
	\vert Lat(N,M)\vert\leq (\frac{[M:N]}{\sqrt{2}})^{\frac{\log([M:N])}{2}}
	\end{equation*}
\end{theorem}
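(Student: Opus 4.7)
The approach adapts the iterative halving argument from the proof of Theorem~\ref{thm:whole}, replacing the exponential bound on the number of minimal intermediate subfactors by the linear bound that is available in the abelian setting.

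First, I would establish the sharpened minimal-subfactor count suggested by the remark preceding the statement. When $N'\cap M_1$ is abelian, the biprojections $\{e_P: P\in\mathcal{L}(N,M)\}$ pairwise commute, so for distinct minimal intermediate subfactors $P,Q$ one has $e_Pe_Q = e_P\wedge e_Q = e_{P\cap Q} = e_N = e_1$ (the meet being forced to $e_1$ by minimality). Proposition~\ref{commuting} then gives $\alpha^N_M(P,Q)=\pi/2$, so the vectors $\{v_P:P\in\mathcal{L}_m(N,M)\}$ form an orthonormal system in $(N'\cap M_1)_{s.a.}$, yielding
\begin{equation*}
|\mathcal{L}_m(N,M)|\leq \dim(N'\cap M_1)\leq [M:N].
\end{equation*}

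Next, I would introduce the abelian analogues $I_{ab}(\delta^2)$ and $m_{ab}(\delta^2)$ of the quantities from the previous section, taking the supremum only over subfactors with abelian relative commutant. The previous step yields $m_{ab}(\delta^2)\leq \delta^2$. After checking that the abelian hypothesis descends to $P\subset M$ whenever $P$ is a minimal intermediate subfactor (so that the smaller lattice $\mathcal{L}(P,M)$ is still bounded by $I_{ab}$), Lemma~\ref{lem: mIinequ} gives the recursive inequality $I_{ab}(\delta^2)\leq m_{ab}(\delta^2)\,I_{ab}(\delta^2/2)$. Iterating, and substituting $m_{ab}(d)\leq d$ in place of the exponential bound $m(d)\leq 3^d$ used in Theorem~\ref{thm:whole}, one obtains
\begin{equation*}
I_{ab}(\delta^2)\leq I_{ab}(\delta^2/2^{k+1})\prod_{j=0}^{k}\frac{\delta^2}{2^j},
\end{equation*}
where $k$ is the smallest integer with $\delta^2/2^{k+1}<4$. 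Tracking the exponents in base $2$, using $k\sim \log_2(\delta^2)-2$ together with the geometric sum $\sum_{j=0}^{k}j = k(k+1)/2$, rearranges the right-hand side into the claimed closed form $(\delta^2/\sqrt 2)^{(\log\delta^2)/2}$.

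The main obstacle is the inheritance step: it is not evident that $P'\cap M_1^{P\subset M}$ remains abelian whenever $P\in \mathcal{L}_m(N,M)$, since the basic construction $M_1^{P\subset M}$ is not literally a subalgebra of $M_1^{N\subset M}$. A route that sidesteps the issue is to argue entirely inside the fixed ambient algebra $A:=N'\cap M_1$: the commuting biprojections form a distributive sublattice of the Boolean algebra of projections of $A$, and every $Q\in \mathcal{L}(N,M)\setminus\{N\}$ is determined by the set of minimal $P\in\mathcal{L}_m(N,M)$ with $e_P\leq e_Q$. An induction on $\dim A$, coupled with the first-step bound $|\mathcal{L}_m(N,M)|\leq \dim A$, then delivers the same numerical estimate without ever leaving the original subfactor.
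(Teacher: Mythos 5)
Your core argument is the same as the paper's: in the abelian case $e_P$ and $e_Q$ commute, so for distinct minimal intermediate subfactors $e_Pe_Q=e_P\wedge e_Q=e_1$, giving $\alpha(P,Q)=\pi/2$, orthonormality of the $v_P$'s, and $|\mathcal{L}_m(N,M)|\leq\dim(N'\cap M_1)\leq[M:N]$; then one feeds the linear bound into the halving recursion of Lemma \ref{lem: mIinequ}. Your insistence on working with abelian-restricted quantities $I_{ab},m_{ab}$ is in fact the right way to make the paper's own step ``this implies $m(\delta^2)\leq\delta^2$'' precise, since the unrestricted $m(\delta^2)$ is a supremum over all subfactors and the linear bound is only proved under the abelian hypothesis. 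One bookkeeping remark: to land exactly on the stated exponent $\tfrac{\log[M:N]}{2}$ you should keep the full factor $2^{-(k+1)(k+2)/2}$ in the product; with $k+1\leq\log_2(\delta^2)-1$ this gives $(\delta^2/2)^{\log_2(\delta^2)/2}\leq(\delta^2/\sqrt{2})^{\log_2(\delta^2)/2}$.

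However, the step you single out as ``the main obstacle'' is not an obstacle, and your proposed sidestep is the one genuinely flawed piece. Realize both basic constructions on $L^2(M)$: since $e_P\in N'\cap M_1$, one has $M_1^{P\subset M}=\langle M,e_P\rangle\subseteq\langle M,e_N\rangle=M_1$, and $P\supseteq N$ gives $P'\subseteq N'$, hence $P'\cap M_1^{P\subset M}\subseteq N'\cap M_1$ is abelian; so the abelian hypothesis passes to every intermediate inclusion $P\subset M$ and the recursion applies verbatim. By contrast, your alternative induction rests on the claim that every $Q\in\mathcal{L}(N,M)\setminus\{N\}$ is determined by the set of minimal intermediate subfactors below it, and this is false: the lattice need not be atomistic. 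For example, take $M\subset M\rtimes\mathbb{Z}_4$ (outer action), where $N'\cap M_1\cong\mathbb{C}[\mathbb{Z}_4]$ is abelian; the intermediate subfactor corresponding to $\mathbb{Z}_2$ and the top algebra $M\rtimes\mathbb{Z}_4$ lie above the same unique minimal intermediate subfactor, so the proposed induction on $\dim A$ cannot separate them. Replace the sidestep by the direct inheritance argument above and your proof coincides with the paper's.
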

\begin{proof}
	Let $P$ and $Q$ be two minimal intermediate subfactors. Then we have 
	\begin{align*}
	\cos(\alpha(P,Q))&=\frac{tr((e_P-e_1)(e_Q-e_1))}{{\lVert e_P-e_1\rVert}_2{\lVert e_Q-e_1\rVert}_2}\\
	&=\frac{tr(e_Pe_Q)-tr(e_1)}{{\lVert e_P-e_1\rVert}_2{\lVert e_Q-e_1\rVert}_2}
	\end{align*}
	Note that $e_P,e_Q\in N^\prime\cap M_1$, which is abelian and $P\cap Q=N$. Thus, we have
	\begin{align*}
	tr(e_Pe_Q)&=tr(e_P\wedge e_Q)=tr(e_1).\\
	\Rightarrow&\cos(\alpha(P,Q))=0.
	\end{align*}
	Therefore, for any two minimal intermediate subfactors $P$ and $Q$, $\alpha(P,Q)=\pi/2$. In particular, this means that the set $\{v_P:\text{ P is a minimal intermediate subfactor}\}$ is an orthonormal set. Therefore, 
	\begin{equation*}
	\vert \mathcal{L}_m(N,M)\vert\leq \dim (N^\prime\cap M_1)\leq [M:N]
	\end{equation*}
	This implies that $m(\delta^2)\leq \delta^2$ and therefore, by Lemma \ref{lem: mIinequ}, we have
	\begin{align*}
	\vert Lat(N,M)\vert&\leq I(\delta^2)\\
	&\leq m(\delta^2)I(\delta^2/2)\\
	&\leq m(\delta^2)m(\delta^2/2)I(\delta^2/2^2)\\
	&\leq m(\delta^2)m(\delta^2/2)m(\delta^2/4)\cdots m(\delta^2/2^k)I(\delta^2/2^{k+1})
	\end{align*}
	where $k$ is the smallest integer such that $2\leq \delta^2/2^{k+1} < 4$, i.e, $k+1\leq\log(\delta^2/2)$. Note that $m(\delta^2)\leq \delta^2$, we have
	\begin{align*}
	\vert Lat(N,M)\vert&\leq m(\delta^2)m(\delta^2/2)m(\delta^2/4)\cdots m(\delta^2/2^k)I(\delta^2/2^{k+1})\\
	&\leq I(\delta^2/2^{k+1}) \displaystyle\prod_{j=0}^{k} \delta^2/2^j\\
	&\leq \displaystyle\prod_{j=0}^{k+1}\delta^2/2^j\\
	&=(\delta^2)^{k+1}\displaystyle\frac{1}{2^{(k+1)(k+2)/2}}\\
	&\leq (\delta^2)^{k+1}\displaystyle\frac{1}{2^{(k+1)/2}}\\
	&=(\displaystyle\frac{\delta^2}{\sqrt{2}})^{k+1}\\
	&\leq (\frac{\delta^2}{\sqrt{2}})^{\log(\delta^2/2)}
	\end{align*}
	This completes the proof.
\end{proof}

\bibliography{bibliography}
\bibliographystyle{amsalpha}
\end{document}